\newcommand\void[1]       {}
\newtheorem{thm}{Theorem}[section]
\newtheorem{lem}[thm]{Lemma}
\newtheorem{prop}[thm]{Proposition}
\newtheorem{cor}[thm]{Corollary}
\newtheorem{prop-defn}[thm]{Proposition-Definition}
\theoremstyle{definition}
\newtheorem{defn}[thm]{Definition}
\newtheorem{expl}[thm]{Example}
\newtheorem{rem}[thm]{Remark}
\theoremstyle{remark}
\numberwithin{equation}{section}
\numberwithin{thm}{section}
\newcommand{\be}{\begin{equation}}
\newcommand{\ee}{\end{equation}}
\newcommand{\bnu}{\begin{enumerate}}
\newcommand{\enu}{\end{enumerate}}
\newcommand{\Cb}{\mathbb{C}}
\newcommand\CA           {\EuScript{A}}
\newcommand\CB           {\EuScript{B}}
\newcommand\CC           {\EuScript{C}}
\newcommand\CD           {\EuScript{D}}
\newcommand\CE          {\EuScript{E}}
\newcommand\CL          {\EuScript{L}}
\newcommand\CM          {\EuScript{M}}
\newcommand\CN         {\EuScript{N}}
\newcommand\CO         {\EuScript{O}}
\newcommand\CP         {\EuScript{P}}
\newcommand\CQ         {\EuScript{Q}}
\newcommand\CU         {\EuScript{U}}
\newcommand\CV         {\EuScript{V}}
\newcommand\CW         {\EuScript{W}}
\newcommand\CX         {\EuScript{X}}
\newcommand\CY         {\EuScript{Y}}
\newcommand\CZ         {\EuScript{Z}}
\newcommand{\FZ}{\mathfrak{Z}}
 \DeclareMathOperator{\Hom}{Hom}
 \DeclareMathOperator{\Aut}{Aut}
 \DeclareMathOperator{\Ker}{Ker}
 \DeclareMathOperator{\Id}{id}
 \DeclareMathOperator{\ev}{ev}
 \DeclareMathOperator{\Pic}{Pic}
 \DeclareMathOperator{\chara}{char}
 \DeclareMathOperator{\Fun}{Fun}
 \DeclareMathOperator{\Alg}{Alg}
 \DeclareMathOperator{\CAlg}{CAlg}
 \DeclareMathOperator{\Mod}{Mod}
\newcommand{\bk}{{\mathbb{k}}}
\newcommand{\id}{{\mathrm{id}}}
\newcommand{\nn}{\nonumber \\}
\newcommand{\one}{\mathbf{1}}
\newcommand{\rev}{\mathrm{rev}}
\newcommand{\op}{\mathrm{op}}
\newcommand{\mtc}{\EuScript{MT}\mathrm{en}^{\mathrm{ind}}}
\newcommand{\btc}{\EuScript{BT}\mathrm{en}}
\newcommand{\mfus}{\EuScript{MF}\mathrm{us}^{\mathrm{ind}}}
\newcommand{\bfuscl}{\EuScript{BF}\mathrm{us}^{\mathrm{cl}}}
\newcommand{\ncbfuscl}{{^{\mathrm{nc}}}\EuScript{BF}\mathrm{us}^{\mathrm{cl}}}
\newcommand{\bdy}{\mathrm{bdy}}
\newcommand{\bulk}{\mathrm{bulk}}
\newcommand{\mref}[1]{(\ref{#1})}
\newcommand{\fCat}{\mathbf{Cat^{f}}}
\newif\ifFinal
\begin{document}

	\UseAllTwocells

\begin{center} \LARGE
Pointed Drinfeld center functor
\\
\end{center}

\vskip 1.5em
\begin{center}
{\large
Liang Kong$^{a}$,\,
Wei Yuan$^{b,c}$,
Hao Zheng$^{a,d}$\,
~\footnote{Emails:
{\tt  kongl@sustech.edu.cn, wyuan@math.ac.cn, hzheng@math.pku.edu.cn}}}
\\[2em]
$^a$ Shenzhen Institute for Quantum Science and Engineering,\\
and Department of Physics,\\
Southern University of Science and Technology, Shenzhen 518055, China 
\\[1em]
$^b$ Academy of Mathematics and Systems Science,\\
Chinese Academy of Sciences, Beijing 100190, China
\\[1em]
$^c$ School of Mathematical Sciences,\\
University of Chinese Academy of Sciences, Beijing 100049, China
\\[1em]
$^d$ Department of Mathematics,\\
Peking University, Beijing, 100871, China
\end{center}

\vskip 3em

\begin{abstract}
In this work, using the functoriality of Drinfeld center of fusion categories, we generalize the functoriality of full center of simple separable algebras in a fixed fusion category to all fusion categories. This generalization produces a new center functor, which involves both Drinfeld center and full center and will be called the pointed Drinfeld center functor. We prove that this pointed Drinfeld center functor is a symmetric monoidal equivalence. It turns out that this functor provides a precise and rather complete mathematical formulation of the boundary-bulk relation of 1+1D rational conformal field theories (RCFT). In this process, we solve an old problem of computing the fusion of two 0D (or 1D) wall CFT's along a non-trivial 1+1D bulk RCFT. 
\end{abstract}

\tableofcontents

\section{Introduction} \label{sec:introduction}


Motivated by the theory of 1+1D rational conformal field theories (RCFT), it was shown in \cite{dkr} that the notion of center can be made functorial if we define the morphisms in the domain and codomain categories properly. More precisely, for two algebras $A, B$ and an $A$-$B$-bimodule $M$, the following assignment defines a lax functor $Z$: 
\be \label{eq:drinfeld-center-functor-0}
( A \xrightarrow{M} B )
\quad \longmapsto \quad
( Z(A) \xrightarrow{Z^{(1)}(M):=\hom_{A|B}(M,M)} Z(B) ),
\ee
where $Z(A)$ denotes the center of $A$ and $\hom_{A|B}(M,M)$ denotes the algebra of $A$-$B$-bimodule maps of $M$. Moreover, $Z$ is a functor if $A,B$ are restricted to simple separable algebras. 

This naive result becomes highly non-trivial if we consider algebras in a fusion category $\CC$. The main result of \cite{dkr} showed that the same assignment \eqref{eq:drinfeld-center-functor-0} is still a well-defined functor if $A$ and $B$ are simple separable algebras in $\CC$ and $Z(A)$ is the full center of $A$ in the Drinfeld center of $\CC$ \cite{davydov}. When $\CC$ is the modular tensor category $\Mod_V$ of modules over a rational vertex operator algebra $V$ (VOA) \cite{huang-mtc2}, this functor provides a precise mathematical description of the boundary-bulk relation of RCFT's with the fixed chiral symmetry $V$. 

\smallskip
In order to generalize this relation to different chiral symmetries, we need to prove certain functoriality of Drinfeld center, which was established by two of the authors in \cite{kz1}. More precisely, for two fusion categories $\CL, \CM$ and a semisimple $\CL$-$\CM$-bimodule $\CX$, it was proved that the following assignment
\be \label{eq:drinfeld-center-functor-1}
( \CL \xrightarrow{\CX} \CM )
\quad \longmapsto \quad
( \FZ(\CL) \xrightarrow{\FZ^{(1)}(\CX):=\Fun_{\CL|\CM}(\CX,\CX)} \FZ(\CM) ),
\ee
where $\FZ(\CC)$ denotes the Drinfeld center of $\CC$ and $\Fun_{\CL|\CM}(\CX,\CX)$ denotes the category of $\CL$-$\CM$-bimodule functors, gives a well-defined functor. In fact, this Drinfeld center functor provides a precise and complete mathematical description of the boundary-bulk relation of 2+1D (anomaly-free) topological orders with gapped boundaries. In particular, $\FZ^{(1)}(\CX)$ describes a 1+1D gapped domain wall between the two 2+1D topological orders defined by $\FZ(\CL)$ and $\FZ(\CM)$. 

\smallskip
To generalize the boundary-bulk relation of RCFT's with different chiral symmetries demands us to combine the full center functor with the Drinfeld center functor to a so-called {\em pointed Drinfeld center functor} as illustrated in the following assignment: 
$$  
\small
\left( \xymatrix{ (\CL,L) \ar[rr]^{(\CX,\, x)} & & (\CM,M)} \right)
\longmapsto 
\left( \xymatrix{ (\FZ(\CL),Z(L)) \ar[rrr]^{(\FZ^{(1)}(\CX),\,\ Z^{(1)}(x))} & & & (\FZ(\CM),Z(M))} \right),
$$
for indecomposable multi-tensor categories $\CL,\CM$ and simple exact algebras $L\in\CL, M\in\CM$. The pair $(\FZ(\CL),Z(L))$ is indeed the categorical center of $(\CL,L)$ as shown in \cite{s2}. We prove in Section \ref{sec:Df-center-functor} (see Theorem \ref{thm:Df-center}) that it is a well-defined functor. When we restrict the domain to indecomposable multi-fusion categories and simple separable algebras, this pointed Drinfeld functor becomes a monoidal equivalence (see Theorem \ref{thm:ff}). This monoidal equivalence provides a precise mathematical description of the boundary-bulk relation of 1+1D RCFT's with different (but still rational) chiral symmetries. It also summarizes and generalizes a few earlier results in the literature. In the process of proving this functoriality, we prove a key formula (\ref{eqn:main}), which solves an old open problem of defining and computing the fusion of two 1D (or 0D) domain walls along a non-trivial 1+1D bulk RCFT. We postpone a detailed explanation of the physical significants of this work until Section\,\ref{sec:cft}.

\smallskip
In Section \ref{sec:cft}, we will also briefly discuss how to generalize this 1-functor to a 2-functor (even further to a 3-functor) as illustrated by the following assignment: 
$$  
\small
\xymatrix{ (\CL,L) \ar@/_12pt/[rr]_{(\CY,\, y)} \ar@/^12pt/[rr]^{(\CX,\, x)} & \Downarrow (F,f) & (\CM,M)}
\longmapsto 
\xymatrix{ (\FZ(\CL),Z(L)) \ar@/_16pt/[rr]_{(\FZ^{(1)}(\CY),\,Z^{(1)}(y))} \ar@/^16pt/[rr]^{(\FZ^{(1)}(\CX),\,Z^{(1)}(x))} & \Downarrow (\FZ^{(2)}(F),\,Z^{(2)}(f)) & (\FZ(\CM),Z(M))}
$$
where $f:F(x)\to y$ is a morphism in $\CY$ and $(\FZ^{(2)}(F),Z^{(2)}(f))$ will be defined in Section \ref{sec:df-3equiv}. The image of this 2-functor provides a precise mathematical description of 1+1D RCFT's with topological defects of all codimensions. 

\smallskip
Mathematically, the main theme of this work lies in the intricate interrelations between algebras in different dimensions, i.e. $E_0,E_1,E_2$-algebras (see for example \cite{lurie}). We reveal this interrelation in our setting in details in Section \ref{sec:left-right-full-centers} when we introduce the notions of the left/right/full center of an algebra in a monoidal category. The most technical part of this work lies in proving the key formula (\ref{eqn:main}), which compute the relative tensor product of two $E_0$-algebras (or $E_1$-algebras) over an $E_2$-algebra. Algebras in different dimensions interacting with each other in a unified framework is one of the central themes of mathematical physics in our time \cite{lurie,af}. The main result of this work is a manifestation of this theme.

\smallskip
We briefly explain the layout of this paper. In Section \ref{sec:fusion-cat}, we review relevant results in tensor categories and set the notations along the way. In Section \ref{sec:left-right-full-centers}, we introduce the notion of a left/right/full center of an algebra in a monoidal category, and explain their relation to internal homs, and prove some of its properties. These notions play crucial roles in our construction of the pointed Drinfeld center functor. In Section \ref{sec:Drinfeld-full-center-functor}, we construct the pointed Drinfeld center functor, and prove that it is a well-defined symmetric monoidal equivalence. Section \ref{sec:formula} is devoted to prove the key formula (\ref{eqn:main}) in Theorem \ref{thm:main_formula}. The left side of (\ref{eqn:main}) defines the horizontal composition of 1-,2-morphisms in the codomain category of the pointed Drinfeld center functor; the right side is the image of the horizontal composition of 1-,2-morphisms in the domain category; the isomorphism in this formula guarantees the functoriality. In Section \ref{sec:cft}, we sketch the pointed Drinfeld center 3-functor, and provide the physical motivations and meanings of the 1-truncation and the 2-truncation of this 3-functor. In Appendix\,\ref{sec:davydov}, we compare our and Davydov's definition of the full center. 

\bigskip
\noindent {\bf Acknowledgement.} Both LK and HZ are supported by the Science, Technology and Innovation Commission of Shenzhen Municipality (Grant Nos. ZDSYS20170303165926217 and JCYJ20170412152620376) and Guangdong Innovative and Entrepreneurial Research Team Program (Grant No. 2016ZT06D348), and by NSFC under Grant No. 11071134. LK is also supported by NSFC under Grant No. 11971219. WY is supported by the NSFC under grant No. 11971463, 11871303, 11871127. HZ is supported by NSFC under Grant No. 11131008.

\section{Elements of tensor categories} \label{sec:fusion-cat}

In this section, we review some basic facts of tensor categories 
that are important to this work, and set our notations along the way. Throughout the paper, $k$ is an algebraic closed field 
and $\bk$ is the symmetric monoidal category of finite-dimensional vector spaces over $k$.

\subsection{Finite monoidal categories and finite modules} \label{sec:finite-cat}

For a monoidal category $\CC$, we use $\CC^{\rev}$ to denote the same category as $\CC$ but equipped with the reversed tensor product $\otimes^\rev: \CC \times \CC \to \CC$ defined by $a\otimes^\rev b := b\otimes a$. If $\CC$ is rigid, then we use $a^L$ and $a^R$ to denote the left dual and the right dual of an object $a \in \CC$, respectively.

\smallskip

A {\em finite category} over $k$ is a $k$-linear category $\CC$ that is equivalent to the category
of finite-dimensional modules over a finite-dimensional $k$-algebra $A$ (see \cite[Definition 1.8.6]{egno}
for an intrinsic definition). We say that $\CC$ is {\em semisimple} if the defining algebra $A$ is semisimple. 

A {\em finite monoidal category} over $k$ is a monoidal category $\CC$ such that $\CC$ is a finite category over $k$ and the tensor product $\otimes: \CC \times \CC \to \CC$ is $k$-bilinear  and right exact in each variable. 
A {\em finite multi-tensor category}, or {\em multi-tensor category} for short, is a rigid finite monoidal category. A {\em tensor category} is a multi-tensor category with a simple tensor unit.
A multi-tensor category is {\em indecomposable} if it is neither zero nor the direct sum of two nonzero multi-tensor categories.
A {\em multi-fusion category} is a semisimple multi-tensor category, and a {\em fusion category} is a multi-fusion category with a simple tensor unit.

Let $\CC$ and $\CD$ be finite monoidal categories. A {\em finite left $\CC$-module} $\CM$ (denoted as ${}_\CC\CM$) is a left $\CC$-module such that $\CM$ is a finite category and the action functor $\odot:\CC\times\CM\to\CM$ is $k$-bilinear and right exact in each variable. 
We say that $\CM$ is {\em indecomposable} if it is neither zero nor the direct sum of two nonzero finite left $\CC$-modules.
The notions of a {\em finite right $\CD$-module} $\CN$ (denoted as $\CN_\CD$) and a {\em finite $\CC$-$\CD$-bimodule} $\CO$ (denoted as ${}_\CC\CO_\CD$) are defined similarly (see \cite[Definition 7.1.7]{egno}).

Let $\CC$ and $\CD$ be finite monoidal categories and $\CM, \CN$ be finite $\CC$-$\CD$ bimodules. A {\em $\CC$-$\CD$-bimodule functor} $F: \CM \to \CN$ is a $k$-linear functor equipped with an isomorphism $c\odot F(-)\odot d\simeq F(c\odot-\odot d)$ for $c\in\CC$ and $d\in\CD$ satisfying some natural axioms (see \cite[Definition 7.2.1]{egno}). 
\void{
A {\em $\CC$-$\CD$-bimodule natural transformation} $\gamma: F \to G$ between two bimodule functors $F$ and $G$ is a natural transformation such that the following two diagrams
$$
\xymatrix @R=0.2in{
c\odot F(x) \ar[d]_\simeq \ar[rr]^{\id_c \odot \gamma_x} & & c\odot G(x) \ar[d]^{\simeq} \\
F(c\odot x)  \ar[rr]^{\gamma_{c\odot x}} & & G(c\odot x) 
}
\quad\quad
\xymatrix @R=0.2in{
F(x) \odot d \ar[d]_\simeq  \ar[rr]^{\gamma_x \odot \id_d} & & G(x) \odot d \ar[d]^{\simeq} \\
F(x \odot d) \ar[rr]^{\gamma_{x \odot d}} & & G(x \odot d)  \\
}
$$
are commutative for all $c\in \CC,x\in\CM,d\in\CD$.
} 
We use $\Fun_{\CC|\CD}(\CM, \CN)$ to denote the category of right exact $\CC$-$\CD$-bimodule functors from $\CM$ to $\CN$. 
If $\CC = \bk$ (resp. $\CD = \bk$ or $\CC=\CD=\bk$), we abbreviate $\Fun_{\CC|\CD}(\CM, \CN)$ to $\Fun_{\CD^\rev}(\CM, \CN)$ (resp. $\Fun_{\CC}(\CM, \CN)$ or $\Fun(\CM, \CN)$).  

\begin{rem} \label{rem:adjoint-module-functor}
Let $\CC$ be a rigid monoidal category and $\CM$ and $\CN$ be two left $\CC$-modules. If a left $\CC$-module functor $F:\CM \to \CN$ has a right or left adjoint $G: \CN \to \CM$, then $G$ is automatically a left $\CC$-module functor. 
\end{rem}

An {\em algebra} in a monoidal category $\CC$ is an object $A\in\CC$ equipped with two morphisms $u_A: \one_\CC \to A$ and $m_A: A\otimes A \to A$ in $\CC$ satisfying the unity and associativity properties (see \cite[Definition 7.8.1]{egno}). An {\em algebra homomorphism} $f:A\to B$ is a morphism in $\CC$ such that
\begin{align*}
    u_B = f \circ u_A, \quad m_{B} \circ (f \otimes f) = f \circ m_{A}.
\end{align*}
In what follows, we use $\Alg(\CC)$ to denote the category of algebras in $\CC$ and algebra homomorphisms between them. Note that $\Alg(\CC)$ has an initial object given by the tensor unit $\one_\CC$ of $\CC$.

Let $\CC,\CD$ be monoidal categories and $A \in \Alg(\CC)$, $B\in\Alg(\CD)$. A {\em left $A$-module} in a left $\CC$-module $\CM$ is an object $M\in\CM$ equipped with a morphism $\rho:A \odot M \to M$ in $\CM$ satisfying the usual unity and associativity properties. Similarly, one define the notion of a right $B$-module in a right $\CD$-module $\CN$ and that of an $A$-$B$-bimodule in a $\CC$-$\CD$-bimodule $\CO$. We use $_A\CM$, $\CN_B$, $_A\CO_B$ to denote the category of left $A$-modules in $\CM$, right $B$-modules in $\CN$ and $A$-$B$-bimodules in $\CO$, respectively.

\begin{prop}[{\cite[Proposition 2.3.9]{kz1}}]\label{prop:kz_module_alg}
For a multi-tensor category $\CC$  and a finite left $\CC$-module $\CM$, 
there exists $A \in \Alg(\CC)$ such that $\CM \simeq \CC_A$ as finite left $\CC$-modules. 
\end{prop}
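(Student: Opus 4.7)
The plan is to realize $\CM$ as the category of right modules over an internal endomorphism algebra of a progenerator.

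First, I would choose a projective generator $P$ of the finite $k$-linear category $\CM$; such a $P$ exists since $\CM$ is equivalent to finite-dimensional modules over a finite-dimensional $k$-algebra. The action functor $-\odot P:\CC\to\CM$ is $k$-linear and right exact between finite categories, so it admits a right adjoint $[P,-]_\CC:\CM\to\CC$, the internal hom. A chase of adjunctions, using rigidity of $\CC$, yields the projection formula
\[
[P,\, c\odot N]_\CC \;\simeq\; c\otimes [P,N]_\CC,
\]
since $\Hom_\CC(c',[P,c\odot N]_\CC)\simeq \Hom_\CM(c'\odot P, c\odot N)\simeq \Hom_\CM(c^L\otimes c'\odot P, N)\simeq \Hom_\CC(c', c\otimes[P,N]_\CC)$ naturally in $c'$.

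Next, I would equip $A:=[P,P]_\CC$ with an algebra structure in $\CC$ in the standard way (unit adjoint to $\id_P$, multiplication adjoint to the composite $A\otimes A\odot P\to A\odot P\to P$ of two evaluations) and check that $[P,N]_\CC$ is naturally a right $A$-module in $\CC$. This upgrades the internal hom to a functor $F:=[P,-]_\CC:\CM\to \CC_A$, and the projection formula above exhibits $F$ as a left $\CC$-module functor.

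To finish, I would produce a quasi-inverse $G:\CC_A\to \CM$ as the relative tensor product $G(X):=X\odot_A P$, defined as the coequalizer of the two maps $X\otimes A\odot P\rightrightarrows X\odot P$ coming from the right $A$-action on $X$ and the canonical evaluation $A\odot P\to P$. On the generators the unit and counit are manifest isomorphisms: $GF(P)=A\odot_A P\simeq P$ and $FG(A)=[P,P]_\CC=A$; the projection formula then extends these to all free modules $c\otimes A$ in $\CC_A$ and all objects $c\odot P$ in $\CM$.

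The main obstacle is propagating these isomorphisms from generators to arbitrary objects. The key input is that $F$ is exact: rigidity of $\CC$ makes $c\odot -$ left adjoint to $c^L\odot -$, so $c\odot -$ preserves projectives; thus $c\odot P$ is projective for every $c\in\CC$, which forces the right adjoint of $-\odot P$ to preserve epimorphisms. Since $G$ is right exact as a left adjoint, every object $X\in\CC_A$ admits a free presentation $c_1\otimes A\rightrightarrows c_0\otimes A\to X$, and every object $N\in\CM$ admits a presentation $c_1'\odot P\rightrightarrows c_0'\odot P\to N$ (using that $P$ is a generator). Being natural transformations of right-exact functors that are isomorphisms on these building blocks, the unit $X\to FG(X)$ and counit $GF(N)\to N$ are isomorphisms on every object, so $F$ is the desired equivalence of finite left $\CC$-modules.
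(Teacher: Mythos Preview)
The paper does not prove this proposition; it is quoted from \cite{kz1} (and is essentially the finite, non-semisimple extension of Ostrik's theorem, as in \cite{eo} and \cite[Section~7.10]{egno}). Your argument is the standard one and is correct.

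One step is a bit compressed and deserves a word: from ``$c\odot-$ is left adjoint to $c^L\odot-$'' you conclude that $c\odot-$ preserves projectives, but being a left adjoint alone does not give this. What you need is that the right adjoint $c^L\odot-$ is exact, which holds because $c^L\odot-$ itself has both a left adjoint ($c\odot-$) and a right adjoint ($c^{LL}\odot-$). Then $\Hom_\CM(c\odot P,-)\simeq\Hom_\CM(P,c^L\odot-)$ is exact, so $c\odot P$ is projective for every $c\in\CC$, and your exactness argument for $[P,-]_\CC$ goes through. With that clarification, the presentation argument at the end is fine: both $F$ and $G$ are right exact $\CC$-module functors that are mutually inverse on the generating objects $c\odot P$ and $c\otimes A$, hence everywhere.
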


Given an algebra $A$ in a finite monoidal category $\CC$, we use $x\otimes_A y$ to denote the coequalizer of the parallel morphisms $x\otimes A\otimes y \rightrightarrows x\otimes y$ for a right $A$-module $x$ and a left $A$-module $y$. Note that ${}_A \CC_A$ is a finite monoidal category with tensor product $\otimes_A$ and tensor unit $A$.

\void{
\begin{prop}\label{lem:right_B_module_in_right_A_equ_right_B}
    Let $A$ be an algebra in a finite monoidal category $\CC$ and $B \in \Alg({}_A \CC_A)$. Then
    \begin{enumerate}
        \item $\CC_B$ and $(\CC_A)_B$ are equivalent as left $\CC$-modules,
        \item $_B\CC$ and ${}_B ({}_A \CC)$ are equivalent as right $\CC$-modules.
        \item $_B \CC_B$ and ${}_B({}_{A}\CC_{A})_{B}$ are equivalent as monoidal categories.
    \end{enumerate} 
\end{prop}

\begin{proof}
We prove the first claim while the rest two are proved similarly. According to the proof of Proposition \ref{lem:des_alg_in_bimodule_cat}, $B$ defines an algebra in $\CC$ and the $A$-$A$-bimodule structure on $B$ is induced by an algebra homomorphism $u: A \to B$. 

If $x$ is a right $B$-module in $\CC$ with action $\rho: x \otimes B \to x$, then $x$ is a right $A$-module with the action given by the composition
$$
    x \otimes A \xrightarrow{\id_x \otimes u} x \otimes B \xrightarrow{\rho} x. 
$$
Moreover, $\rho$ induces a morphism $\rho':x\otimes_A B\to x$, making $x$ a right $B$-module in $\CC_A$.
\void{
It is clear that the following diagram commutes:
\begin{align*}
    \xymatrix @R = 0.2in{
        X \otimes A \otimes B \ar[r]^{\mref{equ:a_right_A_action}} \ar[d] & X \otimes B \ar[d]^{\rho}\\
        X \otimes B \ar[r]^{\rho} & B
    }
\end{align*}
where the unmarked vertical arrow is induced by the left $A$-module action of $B$. Therefore there is a unique morphism $\rho' \in \Hom_{\CC_A}(X \otimes_A B, X)$ making the following diagram commutes
\begin{align*}
    \xymatrix @R=0.2in{
        X \otimes B \ar[r] \ar[dr]_{\rho} & X \otimes_A B \ar[d]^{\rho'}\\
        & X
    }
\end{align*}
And $(X, \rho')$ is in $(\CC_A)_B$. 
}

Conversely, if $x$ is right $B$-module in $\CC_A$ with action $\rho':x\otimes_A B\to x$, then $x$ is a right $B$-module in $\CC$ with the action given by the composition
$$
    \rho: x\otimes B \twoheadrightarrow x\otimes_A B \xrightarrow{\rho'} x. 
$$
The above two constructions are clearly inverse to each other, therefore $\CC_B$ can be tautologically identified with $(\CC_A)_B$.
\end{proof}
}

\medskip

Let $\CC$ be a finite monoidal category, $\CM$ a finite right $\CC$-module and $\CN$ a finite left $\CC$-module. The {\em relative tensor product} of $\CM$ and $\CN$ over $\CC$ is a universal finite category $\CM\boxtimes_\CC\CN$ equipped with a functor $\boxtimes_\CC:\CM\times\CN\to\CM\boxtimes_\CC\CN$, which is $k$-bilinear and right exact in each variable, intertwining the actions of $\CC$. See \cite{tam, eno1, eno3, dd, dss, kz1} for more details. 
Note that, in the special case $\CC = \bk$, $\CM \boxtimes_\bk \CN$ is simply Deligne's tensor product $\CM \boxtimes \CN$. 

\smallskip
 
The following description of relative tensor product is adequate for many purposes.

\begin{thm}[{\cite[Theorem 3.3]{dss}\cite[Theorem 2.2.3]{kz1}}] \label{thm:tensor-product-exits}
   For a multi-tensor category $\CC$, a finite right $\CC$-module $\CM$ and 
    a finite left $\CC$-module $\CN$, the relative tensor product $\CM \boxtimes_{\CC} \CN$ exists. Without loss of generality, suppose that $\CM = {}_{M}\CC$ and $\CN = \CC_{N}$ for $M, N \in \Alg(\CC)$. There are canonical equivalences 
$$\CM \boxtimes_{\CC} \CN \simeq {}_{M}\CC_{N} \simeq \Fun_{\CC}(\CC_{M}, \CC_{N})$$
$$m\boxtimes_\CC n \mapsto m\otimes n \mapsto -\otimes_M m\otimes n.$$
\end{thm}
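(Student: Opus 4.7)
The plan is to first invoke Proposition \ref{prop:kz_module_alg} to reduce to the case $\CM = {}_M\CC$ and $\CN = \CC_N$, noting that ${}_M\CC$ carries its canonical right $\CC$-action by right multiplication and $\CC_N$ its canonical left $\CC$-action by left multiplication. The theorem then splits into two independent claims: an Eilenberg--Watts style equivalence ${}_M\CC_N \simeq \Fun_\CC(\CC_M, \CC_N)$, and the identification of the relative tensor product ${}_M\CC \boxtimes_\CC \CC_N$ with ${}_M\CC_N$.

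For the Eilenberg--Watts equivalence, I would construct mutually inverse functors $\Phi(X) := -\otimes_M X$ and $\Psi(F) := F(M)$. The functor $\Phi(X)$ is right exact and left $\CC$-linear because $\otimes_M$ is right exact in each argument and commutes with the outer $\CC$-tensor. For $\Psi$, the right $N$-module structure on $F(M)$ comes from $F$ landing in $\CC_N$, while the left $M$-action is obtained by applying $F$ to the multiplication $m: M \otimes M \to M$ (a right $M$-module morphism) and invoking the $\CC$-linearity isomorphism $F(c \otimes M) \simeq c \otimes F(M)$. The composite $\Psi \circ \Phi$ is the identity via the unit isomorphism $M \otimes_M X \simeq X$. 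For $\Phi \circ \Psi \simeq \id$, every $Y \in \CC_M$ fits into a right exact presentation $Y_1 \otimes M \rightrightarrows Y_0 \otimes M \to Y$ with $Y_i \in \CC$; right exactness of both $F$ and $-\otimes_M F(M)$ together with the obvious natural isomorphism on free modules $F(Y_i \otimes M) \simeq Y_i \otimes F(M) \simeq (Y_i \otimes M) \otimes_M F(M)$ extends to a natural isomorphism $F \simeq -\otimes_M F(M)$.

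To show that ${}_M\CC_N$ satisfies the universal property of ${}_M\CC \boxtimes_\CC \CC_N$, I would take the candidate bifunctor $(P,Q) \mapsto P \otimes Q$, which is manifestly $k$-bilinear, right exact in each variable, and $\CC$-balanced by associativity of $\otimes$. Given any balanced right exact bifunctor $G: {}_M\CC \times \CC_N \to \CE$ into a finite category $\CE$, the extension $\tilde G: {}_M\CC_N \to \CE$ is forced on objects of the form $P \otimes Q$ to be $G(P,Q)$. Every $X \in {}_M\CC_N$ admits a reflexive coequalizer presentation of the form $M \otimes X \otimes N \otimes N \rightrightarrows M \otimes X \otimes N \to X$ coming from its two-sided bar resolution, so $\tilde G(X)$ is forced to be the corresponding coequalizer in $\CE$; the balancing hypothesis is exactly what makes this coequalizer well-defined independently of which presentation is chosen.

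The main obstacle I anticipate is the bookkeeping around right exactness and the preservation of the \emph{finite category} structure. One must verify that the extension $\tilde G$ is itself right exact, which reduces to checking that it preserves reflexive coequalizers, and ultimately to the separate right exactness of $G$ in each variable. One must also know that ${}_M\CC_N$ is a finite $k$-linear category in the sense of the paper, which follows from writing it as the category of finite-dimensional modules over the opposite endomorphism algebra of a projective generator. Once these finiteness and exactness checks are in place, the rest of the argument is a formal diagram chase extending the comparison from objects $P\otimes Q$ to all of ${}_M\CC_N$ via the universal property of coequalizers, and matching the chain $m\boxtimes_\CC n \mapsto m\otimes n \mapsto -\otimes_M(m\otimes n)$ under the two constructed equivalences.
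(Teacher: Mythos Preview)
The paper does not supply its own proof of this theorem: it is stated with citations to \cite[Theorem 3.3]{dss} and \cite[Theorem 2.2.3]{kz1} and then used as a black box. So there is no in-paper argument to compare against.

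Your outline is essentially the standard one that those references use. The reduction to $\CM={}_M\CC$, $\CN=\CC_N$ via Proposition \ref{prop:kz_module_alg}, the Eilenberg--Watts equivalence ${}_M\CC_N\simeq\Fun_\CC(\CC_M,\CC_N)$ via $X\mapsto -\otimes_M X$ and $F\mapsto F(M)$, and the verification of the universal property of $\boxtimes_\CC$ by extending off the dense image of $(P,Q)\mapsto P\otimes Q$ using bar-type coequalizer presentations, are exactly the ingredients in \cite{kz1}. The only place your sketch is a bit thin is the last step: you should be explicit that the universal property in this setting is phrased for right exact $k$-bilinear balanced functors into \emph{finite} categories, so uniqueness and existence of $\tilde G$ require only that every object of ${}_M\CC_N$ is a cokernel of a map between objects of the form $P\otimes Q$ (a single-step presentation suffices), and that $\tilde G$ so defined is automatically right exact because it agrees with a right exact functor on a dense subcategory. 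With those points made precise, your argument matches the cited proofs.
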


\void{
\begin{thm}[{\cite[Theorem 2.18]{eno1}}] \label{thm:eno-semisimple}
For a multi-fusion category $\CC$ and two semisimple left $\CC$-modules $\CM,\CN$,  
the category $\Fun_{\CC}(\CM,\CN)$ is a semisimple category and $\Fun_{\CC}(\CM,\CM)$ is a multi-fusion category. 
\end{thm}
}

\subsection{Monoidal modules over a braided monoidal category} \label{sec:relative-over-BTC}

Let $\CC$ be a braided monoidal category with braiding $c_{x,y}: x\otimes y \to y\otimes x$ for $x,y\in \CC$. We use $\overline{\CC}$ to denote the same monoidal category $\CC$ but equipped with the anti-braiding $\bar{c}_{x,y}:=c_{y,x}^{-1}$.
The {\em M\"{u}ger center} of $\CC$, denoted by $\CC'$, is defined to be the full subcategory of $\CC$ consisting of those objects $x$ such that $c_{y,x}\circ c_{x,y}=\id_{x\otimes y}$ for all $y\in\CC$. It is clear that $\one_\CC \in \CC'$. A braided fusion category is called {\em non-degenerate} if its M\"uger center is equivalent to $\bk$.

Recall that the {\em center} (or {\em Drinfeld center} or {\em monoidal center}) of a monoidal category 
$\CC$, denoted by $\FZ(\CC)$, is the category of pairs $(z,\beta_{z,-})$, where $z \in \CC$ and $\beta_{z,-}: z \otimes - \to - \otimes z$ is a natural isomorphism, called a {\em half-braiding} 
(see \cite{majid,js}).
The category $\FZ(\CC)$ has a natural structure of a braided monoidal category.
Moreover, $\FZ(\CC)$ can be identified with the category of $\CC$-$\CC$-bimodule functors of $\CC$ (see for example \cite[Theorem 7.16.1]{egno}). 

\begin{defn}[{\cite{kz1}}] \label{def:monoidal-modules}
Let $\CC$ and $\CD$ be finite braided monoidal categories.
\bnu
\item A {\em monoidal left $\CC$-module} is a finite monoidal category $\CM$ equipped with a right exact $k$-linear braided monoidal functor $\phi_\CM:\overline\CC\to\FZ(\CM)$.
\item A {\em monoidal right $\CD$-module} is a finite monoidal category $\CM$ equipped with a right exact $k$-linear braided monoidal functor $\phi_\CM:\CD\to\FZ(\CM)$.
\item A {\em monoidal $\CC$-$\CD$-bimodule} is a finite monoidal category $\CM$ equipped with a right exact $k$-linear braided monoidal functor $\phi_\CM:\overline\CC\boxtimes \CD\to\FZ(\CM)$.
\enu
Two monoidal $\CC$-$\CD$-bimodules $\CM,\CN$ are {\em equivalent} if there is a $k$-linear monoidal equivalence $\CM\simeq\CN$ such that the composite braided monoidal equivalence $\overline\CC\boxtimes\CD \xrightarrow{\phi_\CM} \FZ(\CM)\simeq\FZ(\CN)$ is isomorphic to $\phi_\CN$.
 
A monoidal $\CC$-$\CD$-bimodule is said to be {\em closed} if $\phi_\CM$ is an equivalence. 
When $\CC$ and $\CD$ are braided fusion categories, a monoidal $\CC$-$\CD$-bimodule $\CM$ is called a {\em multi-fusion (resp. fusion) $\CC$-$\CD$-bimodule} if $\CM$ is a multi-fusion (resp. fusion) category.
\end{defn}

\begin{rem} \label{rem:monoidal-left-right-bi}
A monoidal left $\CC$-module is simply a monoidal $\CC$-$\bk$-bimodule and a monoidal right $\CD$-module is simply a monoidal $\bk$-$\CD$-bimodule. Conversely, a monoidal $\CC$-$\CD$-bimodule is precisely a monoidal right $\overline{\CC}\boxtimes \CD$-module. Since $\overline{\FZ(\CM)} \simeq \FZ(\CM^{\rev})$ canonically as braided monoidal categories, to say that $\CM$ is a monoidal $\CC$-$\CD$-bimodule is equivalent to say that $\CM^\rev$ is a monoidal $\CD$-$\CC$-bimodule.
\end{rem}

\begin{expl}\label{exam:Fun_monoidal_bimodule}
Let $\CC,\CD$ be multi-tensor categories and $\CM$ a finite $\CC$-$\CD$-bimodule. 
Then $\Fun_{\CC|\CD}(\CM,\CM)$ is a monoidal $\FZ(\CC)$-$\FZ(\CD)$-bimodule. 
To see this we may assume without loss of generality that $\CD=\bk$. 
There is a monoidal functor $\phi: \FZ(\CC) \to \Fun_{\CC}(\CM,\CM)$ defined by $c \mapsto c\odot-$, where $\phi_c:=c\odot-$ is equipped with a natural isomorphism for $c'\in\CC$ 
$$\phi_c(c'\odot-) = c\odot(c'\odot-) \xrightarrow{\beta_{c,c'}\odot\id} c'\odot(c\odot-) = c'\odot \phi_c(-),
$$
thus defines a left $\CC$-bimodule functor. 
Moreover, $\phi_c$ is equipped with a half-braiding
$$\beta_{\phi_c,F}: \phi_c \circ F 
    = c\odot F(-) \simeq F(c\odot-) 
    = F \circ \phi_c 
$$
thus defines an object of $\FZ(\Fun_{\CC}(\CM,\CM))$.
Note that $\beta_{\phi_c,\phi_{c'}}$ is given by
$$\phi_c \circ \phi_{c'} = c\odot(c'\odot-) \xrightarrow{\beta_{c',c}^{-1}\odot\id} c'\odot(c\odot-) = \phi_{c'} \circ \phi_c. 
$$
Therefore, $\phi$ is promoted to a braided monoidal functor $\overline{\FZ(\CC)} \to \FZ(\Fun_{\CC}(\CM,\CM))$, as desired. 

In the special case $\CM=\CC$, the above construction recovers the canonical braided monoidal equivalence $\overline{\FZ(\CC)} \simeq \FZ(\CC^\rev)$.
\end{expl}

\void{

For a monoidal right $\CD$-module $\CM$, it is also useful to encode the data $\phi_\CM$
by a monoidal functor $f_\CM: \CD\to \CM$  \cite{roman,dmno} which is equipped with a natural isomorphism $f_\CM(a) \otimes y \xrightarrow{\beta_{a,y}} y\otimes f_\CM(a)$ for $a\in \CD,y\in \CM$ such that the following two diagrams:
$$
\xymatrix{
f_\CM(a) \otimes f_\CM(b) \otimes y \ar[rr]^-{\Id_{f_\CM(a)} \otimes \beta_{b,y}} & & f_\CM(a) \otimes y\otimes f_\CM(b) \ar[d]^{\beta_{a,y}\otimes \Id_{f_\CM(b)}} \\
f_\CM(a\otimes b) \otimes y \ar[r]^-{\beta_{a\otimes b, y}} \ar[u]^\sim & y \otimes f_\CM(a\otimes b) & y \otimes f_\CM(a) \otimes f_\CM(b) \ar[l]_\sim
}
$$
\be \label{diag:braiding=half-braiding}
\raisebox{2em}{\xymatrix{
f_\CM(a) \otimes f_\CM(b) \ar[r]^-\sim \ar[d]_{\beta_{a,f_\CM(b)}} & f_\CM(a\otimes b) \ar[d]^{f_\CM(c_{a,b})} \\
f_\CM(b) \otimes f_\CM(a) \ar[r]^-\sim & f_\CM(b\otimes a)\, 
}}
\ee
commute for $a,b\in\CD$. 
Such a functor $f_\CM: \CD \to \CM$ is called a {\em central functor}. 

Similarly, for a monoidal left $\CC$-module $\CM$, the data $\phi_\CM$ can be encoded by a central functor $f_\CM: \overline{\CC} \to \CM$, which amounts to replacing the commutative diagram (\ref{diag:braiding=half-braiding}) by the following one: 
\be \label{diag:braiding=half-braiding-2}
\raisebox{2em}{\xymatrix{
f_\CM(a) \otimes f_\CM(b) \ar[r]^-\sim \ar[d]_{\beta_{a,f_\CM(b)}} & f_\CM(a\otimes b) \ar[d]^{f_\CM(c_{b,a}^{-1})} \\
f_\CM(b) \otimes f_\CM(a) \ar[r]^-\sim & f_\CM(b\otimes a)\, .
}}
\ee

\begin{rem} \label{rem:action-monoidal-module}
For a monoidal right $\CD$-module $\CM$, since $f_\CM: \CD \to \CM$ is a monoidal functor, $\CM$ is automatically a $\CD$-$\CD$-bimodule with the left $\CD$-action $\odot: \CD \times \CM \to \CM$ and the right $\CD$-action $\odot: \CM \times \CD \to \CM$ are defined by 
\be \label{eq:odot-def}
d\odot m := f_\CM(d) \otimes m, \quad\quad\mbox{and}\quad\quad   m \odot d' := m\otimes f_\CM(d'),
\ee
respectively. Moreover, both of the action functors are monoidal functors. This is guaranteed by the natural isomorphism
$$
d \odot m = f_\CM(d) \otimes m \xrightarrow{\beta_{d,m}} m \otimes f_\CM(d) = m\odot d. 
$$
\end{rem}

\begin{defn}[{\cite{kz1}}] \label{def:monoidal-module-map}
Let $\CD$ be a finite braided monoidal category and $\CM,\CN$ monoidal right $\CD$-modules. A {\em monoidal $\CD$-module functor $F:\CM\to\CN$} is a $\bk$-linear monoidal functor equipped with an isomorphism of monoidal functors $F\circ f_\CM\simeq f_\CN: \CD\to\CN$ such that the evident diagram
\be  \label{diag:m-m-map}
\raisebox{2em}{\xymatrix{
  F(f_\CM(d)\otimes x) \ar[r]^\sim \ar[d]_\sim & F(x\otimes f_\CM(d)) \ar[d]^\sim \\
  f_\CN(d)\otimes F(x) \ar[r]^\sim & F(x)\otimes f_\CN(d) \\
}}
\ee
is commutative for $d\in\CD$ and $x\in\CM$. Two right monoidal $\CD$-modules are said to be {\em equivalent} if there is an invertible monoidal $\CD$-module functor $F:\CM\to\CN$. Similar notions for monoidal left modules or bimodules are defined similarly.
\end{defn}

}

If $\CB$ is a braided multi-tensor category, then for any monoidal right $\CB$-module $\CU$ and any monoidal left $\CB$-module $\CV$, the relative tensor product $\CU\boxtimes_\CB\CV$ has a canonical structure of a finite monoidal category with tensor unit $\one_\CU\boxtimes_\CB \one_\CV$ and tensor product $(x\boxtimes_\CB y)\otimes(x'\boxtimes_\CB y') := (x\otimes x') \boxtimes_\CB (y\otimes y')$ (see \cite{green}).

\subsection{Functoriality of Drinfeld center} \label{sec:kz-thm}

We recall two symmetric monoidal categories introduced in \cite{kz1}. 
\begin{itemize}
\item $\mtc$: an object is an indecomposable multi-tensor category $\CL$ over $k$, a morphism between two objects $\CL$ and $\CM$ is an equivalence class of finite $\CL$-$\CM$-bimodules ${}_\CL\CX_\CM$, and the composition of two morphisms ${}_\CL\CX_\CM$ and ${}_\CM\CY_\CN$ is given by the relative tensor product $\CX\boxtimes_\CM\CY$.  
\item $\btc$: an object is a braided tensor category $\CA$ over $k$, a morphism between two objects $\CA$ and $\CB$ is an equivalence class of monoidal $\CA$-$\CB$-bimodules ${}_\CA\CU_\CB$, and the composition of two morphisms ${}_\CA\CU_\CB$ and ${}_\CB\CV_\CC$ is given by the relative tensor product $\CU\boxtimes_\CB\CV$.
\end{itemize}
The tensor product functors of both categories are Deligne's tensor product $\boxtimes$.

\begin{thm}[{\cite[Theorem 3.1.8]{kz1}}] \label{thm:Z-functor}
The assignment 
$$\CL \mapsto \FZ(\CL), \quad\quad {}_\CL\CX_\CM \mapsto \FZ^{(1)}(\CX):=\Fun_{\CL|\CM}(\CX,\CX)$$
defines a symmetric monoidal functor 
$$\FZ: \mtc \to \btc.$$ 
\end{thm}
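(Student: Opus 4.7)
The plan is to verify that $\FZ$ satisfies each piece of the definition of a symmetric monoidal functor: (i) well-definedness of the object and morphism assignments (including passing to equivalence classes of bimodules), (ii) preservation of identity morphisms, (iii) preservation of composition, and (iv) compatibility with Deligne tensor products and the symmetries. Parts (i) and (ii) are essentially bookkeeping built on what is already in the excerpt: $\FZ(\CL)$ is a braided tensor category by construction; by Example \ref{exam:Fun_monoidal_bimodule} the category $\Fun_{\CL|\CM}(\CX,\CX)$ carries a canonical structure of monoidal $\FZ(\CL)$-$\FZ(\CM)$-bimodule; and a $k$-linear equivalence of bimodules $\CX \simeq \CX'$ induces a monoidal equivalence of the corresponding functor categories compatible with the central functors, so the assignment descends to equivalence classes. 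For the identity morphism ${}_\CL\CL_\CL \in \mtc$, the image $\Fun_{\CL|\CL}(\CL,\CL)$ is canonically equivalent to $\FZ(\CL)$ as a monoidal $\FZ(\CL)$-$\FZ(\CL)$-bimodule (this is the standard identification of $\FZ(\CL)$ with bimodule endofunctors of the regular bimodule, recalled in Section \ref{sec:relative-over-BTC}); I would verify that the structure map $\phi$ of Example \ref{exam:Fun_monoidal_bimodule} intertwines with the natural braided monoidal structure on $\FZ(\CL)$ under this identification.

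The main obstacle is part (iii), namely producing a canonical equivalence of monoidal $\FZ(\CL)$-$\FZ(\CN)$-bimodules
\[
\FZ^{(1)}(\CX \boxtimes_\CM \CY) \;\simeq\; \FZ^{(1)}(\CX) \boxtimes_{\FZ(\CM)} \FZ^{(1)}(\CY)
\]
for ${}_\CL\CX_\CM$ and ${}_\CM\CY_\CN$. My strategy is to reduce both sides to bimodule categories over algebras and then compare. By Proposition \ref{prop:kz_module_alg}, choose algebras $A \in \CL \boxtimes \CM^\rev$ and $B \in \CM \boxtimes \CN^\rev$ with $\CX \simeq (\CL\boxtimes\CM^\rev)_A$ and $\CY \simeq (\CM\boxtimes\CN^\rev)_B$ as bimodules. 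Theorem \ref{thm:tensor-product-exits} then identifies $\FZ^{(1)}(\CX) \simeq {}_A(\CL\boxtimes\CM^\rev)_A$ and $\FZ^{(1)}(\CY) \simeq {}_B(\CM\boxtimes\CN^\rev)_B$ as finite monoidal categories. The relative tensor product $\CX\boxtimes_\CM\CY$ can be written concretely as a bimodule over a combined algebra $A\boxtimes_\CM B$ (or an appropriate companion algebra) in $\CL\boxtimes\CN^\rev$, and the left-hand side becomes the corresponding bimodule category. The right-hand side, by the monoidal structure on the relative tensor product described at the end of Section \ref{sec:relative-over-BTC}, also has an explicit description as a bimodule category. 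The hard part is not producing a $k$-linear equivalence, but verifying that it respects the monoidal structure and the central functors coming from $\FZ(\CL)$ and $\FZ(\CN)$. This is where the braiding enters essentially: the half-braidings used in Example \ref{exam:Fun_monoidal_bimodule} encode exactly the data needed to glue the two central functors over the shared $\FZ(\CM)$-action, and tracking this compatibility is the core technical content.

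For part (iv), the tensor products on $\mtc$ and $\btc$ are both Deligne's $\boxtimes$, so I need natural equivalences $\FZ(\CL\boxtimes\CL')\simeq\FZ(\CL)\boxtimes\FZ(\CL')$ and $\FZ^{(1)}(\CX\boxtimes\CX')\simeq\FZ^{(1)}(\CX)\boxtimes\FZ^{(1)}(\CX')$ of braided monoidal categories and of monoidal bimodules respectively, together with the unit equivalence $\FZ(\bk)\simeq\bk$. The factorization property of the Drinfeld center is standard: a half-braiding on $x\boxtimes y$ decomposes as a pair of half-braidings, with the braiding on the product center inherited componentwise. The analogous statement for $\FZ^{(1)}$ follows by expressing bimodule functors on $\CX\boxtimes\CX'$ as pairs of bimodule functors and checking the structure maps $\phi$ split accordingly. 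Finally, the symmetry coherence $\CL\boxtimes\CL'\simeq\CL'\boxtimes\CL \mapsto \FZ(\CL)\boxtimes\FZ(\CL')\simeq\FZ(\CL')\boxtimes\FZ(\CL)$ is automatic from the componentwise description, and I would close the argument by writing down the associator, unitor, and symmetry hexagons and noting that each reduces to an identity of the underlying componentwise data.
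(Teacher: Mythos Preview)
The paper does not prove this theorem itself; it is quoted from \cite{kz1}. The only indication of the proof mechanism is the Remark immediately following the statement, which records that functoriality comes from the explicit equivalence
\[
\Fun_{\CL|\CM}(\CX,\CX') \boxtimes_{\FZ(\CM)} \Fun_{\CM|\CN}(\CY,\CY') \;\simeq\; \Fun_{\CL|\CN}(\CX\boxtimes_\CM\CY,\CX'\boxtimes_\CM\CY'),\qquad f\boxtimes_{\FZ(\CM)} g \mapsto f\boxtimes_\CM g,
\]
together with the fact (cited as \cite[Theorem 3.1.7]{kz1}) that this is a monoidal equivalence when $\CX=\CX'$ and $\CY=\CY'$. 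So the composition isomorphism in \cite{kz1} is built from a single canonical functor defined directly on bimodule functors, and the monoidal/central structure is checked on that canonical map.

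Your route is genuinely different: you choose algebra presentations $\CX\simeq(\CL\boxtimes\CM^\rev)_A$, $\CY\simeq(\CM\boxtimes\CN^\rev)_B$ and try to realise both sides of the composition isomorphism as bimodule categories over explicit algebras, then compare. This can be made to work, but as written it has two soft spots. First, the ``combined algebra $A\boxtimes_\CM B$ (or an appropriate companion algebra) in $\CL\boxtimes\CN^\rev$'' is not a well-formed object: $A$ lives in $\CL\boxtimes\CM^\rev$ and $B$ in $\CM\boxtimes\CN^\rev$, and producing the correct algebra in $\CL\boxtimes\CN^\rev$ whose right modules give $\CX\boxtimes_\CM\CY$ requires a genuine computation (essentially an internal hom), not just a tensor product of $A$ and $B$. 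Second, and more importantly, your approach makes the monoidal and $\FZ(\CL)$-$\FZ(\CN)$-bimodule compatibility depend on non-canonical choices of $A$ and $B$, so the coherence verification you flag as ``the hard part'' becomes substantially harder than in the canonical approach; you acknowledge this but do not carry it out. The advantage of the paper's (i.e.\ \cite{kz1}'s) direct map $f\boxtimes_{\FZ(\CM)} g\mapsto f\boxtimes_\CM g$ is precisely that it is manifestly natural and monoidal, so these coherences are transparent rather than something to be wrestled with after the fact.
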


We will refer to this functor $\FZ$ as the {\em Drinfeld center functor}.

\begin{rem}
Recall that, by Example \ref{exam:Fun_monoidal_bimodule}, $\Fun_{\CL|\CM}(\CX,\CX)$ is a monoidal $\FZ(\CL)$-$\FZ(\CM)$-bimodule. The functoriality of $\FZ$ follows from the following equivalence: 
\begin{align} \label{eq:isomorphism}
\Fun_{\CL|\CM}(\CX,\CX') \boxtimes_{\FZ(\CM)} \Fun_{\CM|\CN}(\CY,\CY') &\simeq \Fun_{\CL|\CN}(\CX\boxtimes_\CM\CY, \CX'\boxtimes_\CM\CY') \nn
f \boxtimes_{\FZ(\CM)} g &\mapsto f\boxtimes_\CM g, 
\end{align}
and the fact that it is a monoidal equivalence when $\CX'=\CX$, $\CY'=\CY$ \cite[Theorem 3.1.7]{kz1}. 
In particular, in the special case $\CL=\CN=\bk$ and $\CX=\CX'=\CY=\CY'=\CM$, \eqref{eq:isomorphism} reduces to a monoidal equivalence:
$$\CM \boxtimes_{\FZ(\CM)} \CM^\rev \simeq \Fun(\CM,\CM), \quad x\boxtimes_{\FZ(\CM)} y\mapsto x\otimes-\otimes y.$$
\end{rem}
 
\void{
\be \label{eq:Z-functor}
\xymatrix{ \CL \ar@/^12pt/[rr]^{{}_\CL\CX_\CM} \ar@/_12pt/[rr]_{{}_\CL\CX_\CM'} & \Downarrow F & \CM}
\quad \longmapsto \quad
\raisebox{3em}{\xymatrix@R=1.3em{  
  & \Fun_{\CL|\CM}(\CX,\CX) \ar[d]^{F\circ -} & \\ 
  \FZ(\CL) \ar@/^8pt/[ur]^{l\odot -}  \ar@/_8pt/[rd]_{l\odot -} & (\Fun_{\CL|\CM}(\CX,\CX'),F) & \FZ(\CM) \ar@/_8pt/[ul]_{-\odot m} \ar@/^8pt/[dl]^{-\odot m}  \\
  & \Fun_{\CL|\CM}(\CX',\CX') \ar[u]^{-\circ F} 
  }} \ .
\ee
}

In the rest of this subsection, we assume $\chara k=0$.
We recall two symmetric monoidal subcategories $\mfus\subset\mtc$ and $\bfuscl\subset\btc$ introduced in \cite{kz1}. 
\begin{itemize}
\item The category $\mfus$ of indecomposable multi-fusion categories over $k$ with the equivalence classes of nonzero semisimple bimodules as morphisms.
\item The category $\bfuscl$ of non-degenerate braided fusion categories over $k$ with the equivalence classes of closed multi-fusion bimodules as morphisms.
\end{itemize}

\void{
Note that $\mfus$ is well-defined due to Theorem \ref{thm:tensor-product-exits} and Theorem \ref{thm:eno-semisimple}, and $\bfuscl$ is well-defined due to the following two theorems.  

\begin{thm}[{\cite[Corollary 2.7.5]{kz1}}]
If $\CC$ is a braided multi-fusion category and $\CM$ and $\CN$ are multi-fusion right and left $\CC$-modules, respectively, then the monoidal category $\CM \boxtimes_{\CC} \CN$ is a multi-fusion category.
\end{thm}

\begin{thm}[{\cite[Theorem 3.3.6]{kz1}}] \label{thm:closed-bimodule}
Let $\CC$, $\CD$ and $\CE$ be non-degenerate braided fusion categories. Let $\CM$ be a closed multi-fusion $\CC$-$\CD$-bimodule and $\CN$ a closed multi-fusion $\CD$-$\CE$-bimodule. Then $\CM\boxtimes_\CD \CN$ is a closed multi-fusion $\CC$-$\CE$-bimodule. 
\end{thm}
}

Now we are ready to state the main theorem of \cite{kz1}. 

\begin{thm}[{\cite[Theorem 3.3.7]{kz1}}] \label{thm:KZ-2}
The Drinfeld center functor $\FZ: \mtc \to \btc$ restricts to a fully faithful symmetric monoidal functor 
$$\FZ: \mfus \to \bfuscl.$$ 
\end{thm}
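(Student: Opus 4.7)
My plan is to verify three assertions: (a) $\FZ$ sends objects and morphisms of $\mfus$ into $\bfuscl$; (b) the symmetric monoidal structure restricts accordingly; and (c) the restricted functor is fully faithful on Hom-sets. Assertion (b) comes for free once (a) is in hand, since both source and target use Deligne's tensor product as symmetric monoidal structure and the required natural compatibilities (notably $\FZ(\CL_1\boxtimes\CL_2)\simeq\FZ(\CL_1)\boxtimes\FZ(\CL_2)$ and the Deligne factorization of bimodule functor categories) are already provided by the unrestricted functor of Theorem \ref{thm:Z-functor}. So the substantive work is in (a) and (c).

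For (a) on objects, it is a classical theorem of M\"uger and Etingof--Nikshych--Ostrik that $\FZ(\CL)$ is a non-degenerate braided fusion category when $\CL$ is indecomposable multi-fusion. For morphisms, I need to show that $\Fun_{\CL|\CM}(\CX,\CX)$ is a closed multi-fusion $\FZ(\CL)$-$\FZ(\CM)$-bimodule whenever $\CX$ is a nonzero semisimple $\CL$-$\CM$-bimodule. Multi-fusionality follows from ENO. For closedness, apply Proposition \ref{prop:kz_module_alg} to the finite left $\CL\boxtimes\CM^\rev$-module $\CX$ to realize $\CX\simeq (\CL\boxtimes\CM^\rev)_A$ for a separable algebra $A$. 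Then $\Fun_{\CL|\CM}(\CX,\CX)\simeq{}_A(\CL\boxtimes\CM^\rev)_A$ as multi-tensor categories, and Morita invariance of the Drinfeld center yields
$$\FZ(\Fun_{\CL|\CM}(\CX,\CX))\;\simeq\;\FZ(\CL\boxtimes\CM^\rev)\;\simeq\;\FZ(\CL)\boxtimes\overline{\FZ(\CM)}.$$
After reconciling the $\rev$-conventions via $\FZ(\CM^\rev)\simeq\overline{\FZ(\CM)}$, one obtains the desired braided equivalence $\overline{\FZ(\CL)}\boxtimes\FZ(\CM)\xrightarrow{\sim}\FZ(\Fun_{\CL|\CM}(\CX,\CX))$, and one checks that this equivalence agrees with the structure functor $\phi_{\Fun_{\CL|\CM}(\CX,\CX)}$ of Example \ref{exam:Fun_monoidal_bimodule}.

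For (c), I will construct an inverse to the map on equivalence classes induced by $\CX\mapsto\Fun_{\CL|\CM}(\CX,\CX)$ via Morita theory. Given a closed multi-fusion $\FZ(\CL)$-$\FZ(\CM)$-bimodule $\CN$, the closedness hypothesis gives $\FZ(\CN)\simeq\overline{\FZ(\CL)}\boxtimes\FZ(\CM)\simeq\FZ(\CL^\rev\boxtimes\CM)$; by Morita theory for multi-fusion categories (equivalence of Drinfeld centers implies Morita equivalence), there exists an invertible semisimple $\CL^\rev\boxtimes\CM$-$\CN$-bimodule $\CP$. Setting $\CX_\CN\dfeq\CP$ viewed as a left $\CL^\rev\boxtimes\CM$-module, i.e.\ as a nonzero semisimple $\CL$-$\CM$-bimodule, provides the sought preimage. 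The two round trips then need verification: that $\Fun_{\CL|\CM}(\CX_\CN,\CX_\CN)\simeq\CN$ as monoidal bimodules (follows from the explicit computation of (a) applied to $\CX_\CN$ together with the Morita-theoretic identification), and conversely that two semisimple $\CL$-$\CM$-bimodules $\CX,\CX'$ with $\Fun_{\CL|\CM}(\CX,\CX)\simeq\Fun_{\CL|\CM}(\CX',\CX')$ as monoidal bimodules must themselves be equivalent (again via the Morita dictionary, using that invertible bimodules are determined up to equivalence by the induced Morita equivalence class).

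I expect the principal obstacle to lie in (c): tracking the monoidal $\FZ(\CL)$-$\FZ(\CM)$-bimodule structure on $\Fun_{\CL|\CM}(\CX,\CX)$ of Example \ref{exam:Fun_monoidal_bimodule} through the Morita correspondence and matching it canonically with the structure functor $\phi_\CN$ of an abstract closed bimodule, as well as checking naturality so that the inverse descends to equivalence classes. A secondary technical point in (a) is the $\rev$-convention and half-braiding bookkeeping required to identify the abstract equivalence produced by Morita invariance with the specific functor $\phi$ of Example \ref{exam:Fun_monoidal_bimodule}.
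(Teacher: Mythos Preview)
The present paper does not prove this theorem; it is quoted verbatim from \cite[Theorem 3.3.7]{kz1}. So there is no ``paper's own proof'' to compare against here. Your outline is, however, essentially the strategy of \cite{kz1}: reduce via folding to left $\CL\boxtimes\CM^\rev$-modules, use Morita invariance of the Drinfeld center for (a), and invoke the ENO correspondence between Morita equivalences and braided equivalences of centers for (c).

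Two points deserve sharpening. First, in (c) you write that ``invertible bimodules are determined up to equivalence by the induced Morita equivalence class.'' As stated this is false: between two Morita equivalent multi-fusion categories the invertible bimodules form a torsor over the Picard group, so there are typically many. The correct statement, and what \cite{kz1} actually uses, is the theorem of Etingof--Nikshych--Ostrik that equivalence classes of invertible $\CA$-$\CB$-bimodules are in bijection with isomorphism classes of braided monoidal equivalences $\FZ(\CA)\simeq\FZ(\CB)$. The hypothesis ``as monoidal $\FZ(\CL)$-$\FZ(\CM)$-bimodules'' is precisely what pins down the braided equivalence, hence the invertible bimodule; without it the argument would fail.

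Second, you use implicitly that $\CX$ is an \emph{invertible} $(\CL\boxtimes\CM^\rev)$-$\Fun_{\CL|\CM}(\CX,\CX)$-bimodule. This requires that $\CX$ be a faithful $(\CL\boxtimes\CM^\rev)$-module. This holds because the Deligne product of indecomposable multi-fusion categories is again indecomposable (over algebraically closed $k$), and any nonzero semisimple module over an indecomposable multi-fusion category is faithful; both facts are established in \cite{kz1} and should be cited explicitly.
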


\begin{defn} \label{def:non-chiral}
We say that a braided fusion category $\CC$ is {\em non-chiral} if there exists a fusion category $\CM$ such that $\CC \simeq \FZ(\CM)$ as braided fusion categories; if otherwise, way say that $\CC$ is {\em chiral}. 
\end{defn}

We denote the full subcategory of $\bfuscl$ consisting of non-chiral non-degenerate braided fusion categories by $\ncbfuscl$. 

\begin{cor} \label{cor:KZ-3}
The Drinfeld center functor $\FZ: \mtc \to \btc$ restricts to a symmetric monoidal equivalence 
$$\mfus \simeq \ncbfuscl.$$ 
\end{cor}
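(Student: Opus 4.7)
The plan is to leverage Theorem \ref{thm:KZ-2}, which already supplies a fully faithful symmetric monoidal functor $\FZ \colon \mfus \to \bfuscl$. To promote this to an equivalence $\mfus \simeq \ncbfuscl$, I only need to identify the essential image of $\FZ$ with $\ncbfuscl$ and check that $\ncbfuscl$ inherits the symmetric monoidal structure from $\bfuscl$.

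Essential surjectivity onto $\ncbfuscl$ is immediate from Definition \ref{def:non-chiral}: any non-chiral non-degenerate braided fusion category is by definition equivalent, as a braided fusion category, to $\FZ(\CM)$ for some fusion category $\CM$, and such $\CM$ is automatically an object of $\mfus$.

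For the reverse inclusion I would show that $\FZ(\CL) \in \ncbfuscl$ for every indecomposable multi-fusion category $\CL$, reducing to the fusion case via Morita equivalence. Concretely, decompose the tensor unit $\one_\CL = \bigoplus_{i=1}^n e_i$ into simple summands and set $\CM \dfeq e_1 \CL e_1$, which is a fusion category. The semisimple bimodule ${}_\CL(\CL e_1)_\CM$ is invertible in $\mfus$, with inverse ${}_\CM(e_1 \CL)_\CL$; this invertibility is the standard Morita equivalence between an indecomposable multi-fusion category and any of its corners, and can be checked using the explicit description of the relative tensor product in Theorem \ref{thm:tensor-product-exits}. Applying $\FZ$ to this isomorphism in $\mfus$ produces a braided equivalence $\FZ(\CL) \simeq \FZ(\CM)$, exhibiting $\FZ(\CL)$ as non-chiral.

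Finally, $\ncbfuscl$ is closed under Deligne's tensor product, since $\FZ(\CM) \boxtimes \FZ(\CM') \simeq \FZ(\CM \boxtimes \CM')$ by symmetric monoidality of $\FZ$, and it contains the tensor unit $\bk \simeq \FZ(\bk)$. Consequently $\ncbfuscl$ is a symmetric monoidal full subcategory of $\bfuscl$, and the restriction $\FZ \colon \mfus \to \ncbfuscl$ is fully faithful, essentially surjective and symmetric monoidal, hence a symmetric monoidal equivalence. The only step that is not essentially formal is the Morita-equivalence argument making $\CL e_1$ invertible in $\mfus$; the remaining work is just bookkeeping with full subcategories.
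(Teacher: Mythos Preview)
Your argument is correct and is exactly the unwinding of why the paper can state this as an immediate corollary of Theorem~\ref{thm:KZ-2} without further proof: full faithfulness is Theorem~\ref{thm:KZ-2}, essential surjectivity onto $\ncbfuscl$ is the definition of non-chiral, and the containment of the image in $\ncbfuscl$ is the standard Morita reduction of an indecomposable multi-fusion category to one of its fusion corners. The only point the paper leaves tacit is precisely your Morita step, which is well known (and appears elsewhere in the paper, e.g.\ Proposition~\ref{prop:cll} in a more general form), so your proposal matches the intended proof.
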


\void{
\begin{rem}
The physical meaning of Theorem \ref{thm:KZ-2} or Corollary \ref{cor:KZ-3} is explained in Section \ref{sec:fusion-anomaly}. We want to remark that a unitary modular tensor category is an $E_2$-algebra; a unitary fusion category is an $E_1$-algebra; the pair $(\CX,x)$ is an $E_0$-algebra \cite{lurie}. 
\end{rem}
}

\section{Centers of an algebra} \label{sec:left-right-full-centers}

\subsection{Internal homs}

Let $\CC$ be a monoidal category and $\CM$ a left $\CC$-module. Given $x, y \in \CM$, the {\em internal hom} $[x,y]_\CC$ in $\CC$, if exists, is defined by the following adjunction: 
$$
\Hom_\CM(c\odot x, y) \simeq \Hom_\CC(c, [x,y]_\CC)
$$
for $c\in\CC$. 
Equivalently, it can be defined as a pair $([x,y]_\CC,\ev_x)$, where $[x,y]_\CC$ is an object in $\CC$ and 
\begin{align}\label{equ:def_ev}
  \ev_x: [x, y]_\CC \odot x \rightarrow y  
\end{align}    
is a morphism in $\CM$, such that $([x, y]_\CC, \ev_x)$ is terminal among all such pairs. That is, for any object $a\in\CC$ and any morphism $f:a\odot x\to y$ in $\CM$, there is a unique morphism $\underline{f}: a\to[x,y]_\CC$ in $\CC$
rendering the following diagram
\begin{align}\label{eq:underline-f}
\raisebox{2em}{ \xymatrix @C=1.5em@R=1.5em{
     & [x,y]_\CC\odot x \ar[rd]^{\ev_x} & \\
 a\odot x \ar[rr]^f   \ar@{->}[ur]^{\underline{f}\odot \id_x}  &  & y   
}}   
\end{align}  
commutative. In other words, $([x, y]_\CC, \ev_x)$ is the terminal object in the comma category $(\CC \odot x\downarrow y)$. 
For simplicity, we will sometimes abbreviate $[x,y]_{\CC}$ to $[x,y]$.

\medskip
We say that $\CM$ is {\em enriched} in $\CC$, if $[x,y]_\CC$ exists for every pair of objects $x, y \in \CM$. 
If $\CC$ is a finite monoidal category, then every finite left $\CC$-module $\CM$ is {\em enriched in $\CC$}
(see \cite[Lemma 2.3.7]{kz1}). In this case, $[-,-]_\CC: \CM^\op \times \CM \to \CC$ is a $k$-bilinear functor. Moreover, if $\CM = \CC_A$, where $\CC$ is a multi-tensor category and $A$ is an algebra in $\CC$, then 
\begin{align*}
  [x,y]_\CC \simeq (x\otimes_A y^R)^L
\end{align*}
(see for example \cite{ostrik}\cite[Lemma 2.1.6]{kz1}). It is known that $[x,x]_\CC$ is an algebra in $\CC$ and $[x,y]_\CC$ is a $[y,y]_\CC$-$[x,x]_\CC$ bimodule (see for example \cite{ostrik}, \cite[Section\, 7.9]{egno}).

\medskip
For a multi-fusion category $\CC$, if $\CX$ is an indecomposable semisimple left $\CC$-module then we have an equivalence of left $\CC$-modules $\CX\simeq\CC_{[x,x]}$, $y\mapsto [x,y]$ for every nonzero $x\in\CC$ (see for example \cite[Corollary 3.5]{kz4}).

\void{
\begin{rem}\label{rem:right_module_internal_hom}
    Let $\CD$ be a monoidal category. Note that $\CM$ is a right $\CD$-module if and only if $\CM$ is a left $\CD^{\rev}$-module. In this case, the internal hom $[x,y]_\CD$ in $\CD$ is defined by the adjunction $\Hom_{\CM}(x \odot d, y) \simeq \Hom_{\CD}(d, [x,y]_\CD)$ for $d \in \CD$. And $[x,y]_\CD$ is a $[x,x]_\CD$-$[y,y]_\CD$ bimodule. 
\end{rem}

\begin{rem} \label{rem:separable}
    If $\CC$ is a multi-fusion category and $\CM$ is a semisimple left $\CC$-module, then Theorem 7.10.1 in \cite{egno} and Theorem \ref{thm:eno-semisimple} imply that $[x,x]_\CC$ is a separable algebra in $\CC$ for every $x\in\CM$. 
\end{rem}

Let $\CC$ be a multi-tensor category and $\CM$ be a finite right $\CC$-module. Then $\Fun_{\CC^{\rev}}(\CM, \CM)$ is a finite monoidal category and $\CM$ is enriched in $\Fun_{\CC^{\rev}}(\CM, \CM)$. In this case, we have the following result. 

\begin{prop}\label{cor:internal_hom_alg_mod_dual}
For $x\in\CM$, let $[x,x]$ be the internal hom in $\Fun_{\CC^{\rev}}(\CM, \CM)$. We have $_{[x,x]}\CM \simeq \CC$ as right $\CC$-modules. 
\end{prop}

\begin{proof}
    We can set $\CM = {}_A\CC$ for certain algebra $A$ in $\CC$. Then $\Fun_{\CC^{\rev}}(\CM, \CM) \simeq {}_A \CC_A$ as monoidal categories. For $a \in {}_A \CC_A$, we have $\Hom_{{}_A\CC}(a \odot x, x)=\Hom_{{}_A\CC}(a \otimes _A x, x) \simeq \Hom_{{}_A \CC_A}(a, x \otimes x^{L})$. Therefore, $[x,x]= x \otimes x^L$. By Lemma \ref{lem:right_B_module_in_right_A_equ_right_B}, 
    \begin{align*}
        \CC \simeq {}_{[x,x]}\CC \simeq {}_{[x,x]} ({}_A\CC) = {}_{[x,x]}\CM,
    \end{align*}
    where the first equivalent is given by $c \to x \otimes c$ (see \cite[Example 7.10.2]{egno}).
\end{proof}

\begin{cor} \label{cor:[xx]=x}
    If $\CM$ is a finite category, then $\CM$ is enriched in $\Fun_{\bk|\bk}(\CM, \CM)$. Moreover, ${}_{[x,x]}\CM \simeq \bk$ as categories, and $x$ is the unique (up to isomorphisms) simple object in ${}_{[x,x]}\CM$.
\end{cor}
}

\subsection{Definitions of left and right centers} 

Let $\CC$ and $\CD$ be finite braided monoidal categories. 
Note that $\Alg(\CC)$ is a monoidal category such that, for $X,Y\in\Alg(\CC)$, the multiplication of the algebra $X\otimes Y$ is defined by
\begin{align*} \label{eq:braiding-convention-1}
    X \otimes Y \otimes X \otimes Y \xrightarrow{\id \otimes c_{Y,X} \otimes \id}
    X \otimes X \otimes Y \otimes Y \xrightarrow{m_X \otimes m_Y} X \otimes Y.
\end{align*}
Note that $\Alg(\overline{\CC}) \simeq \Alg(\CC)^\rev$ canonically as monoidal categories.

If $\CM$ is a monoidal right $\CD$-module, then $\Alg(\CM)$ is a right $\Alg(\CD)$-module in the following way. For $X\in\Alg(\CD)$ and $A\in\Alg(\CM)$, the unit and the multiplication of $A \odot X$ are given respectively by
\begin{align*}
    \one_\CM \simeq \one_\CM \odot \one_\CD \xrightarrow{u_A \odot u_X} A \odot X,\quad 
    (A \odot X)\otimes (A \odot X) \simeq (A \otimes A) \odot (X \otimes X)\xrightarrow{m_A \odot m_X} A \odot X.
\end{align*}
Similarly, if $\CM$ is a monoidal left $\CC$-module, then $\Alg(\CM)$ is a left $\Alg(\CC)$-module; if $\CM$ is a monoidal $\CC$-$\CD$-bimodule then $\Alg(\CM)$ is an $\Alg(\CC)$-$\Alg(\CD)$-bimodule (see Remark \ref{rem:monoidal-left-right-bi}).

\begin{defn}
Let $\CM$ be a monoidal right $\CD$-module and $A\in\Alg(\CM)$, $X\in\Alg(\CD)$. A {\em unital $X$-action} on $A$ is a morphism $f: A\odot X \to A$ in $\Alg(\CM)$ such that the composition $A \simeq A \odot \one_\CD \xrightarrow{\id_A \odot u_X} A \odot X \xrightarrow{f} A$ coincides with $\id_A$,
\end{defn}

\void{
\begin{rem}\label{rem:left_unital_alg_action_def}
    Recall that $\CN$ is a monoidal left $\CC$-module if and only if $\CN^{\rev}$ is a monoidal right $\CC^{\rev}$-module (see Remark \ref{rem:monoidal-left-right-bi}). And the right $\CC^{\rev}$ action on $\CN^{\rev}$ is induced by the left $\CC$ action on $\CN$, i.e., $A \odot X := X \odot A$. Let $A \in \Alg(\CN^{\rev}) = \Alg(\CN)$ and $X \in \Alg(\CC^{\rev})=\Alg(\CC)$. Then $f: A \odot X = X \odot A \to A$ is a unital $X$-action if and only if $f$ is an algebra homomorphism and  
\begin{align*}
\id_A = (A \simeq \one_\CC \odot A \xrightarrow{u_X \odot \id_A} X \odot A \xrightarrow{f} A).  
\end{align*}
\end{rem}
}

\void{

\begin{prop}\label{prop:L-intertwine-R-action}
Let $\CM$ be a monoidal $\CC$-$\CD$-bimodule and $A \in \Alg(\CM)$, $X \in \Alg(\CC)$ and $Y \in \Alg(\CD)$. If $f: X \odot A \to A$ and $g: A \odot Y \to A$ are unital actions. Then $A$ is a $X$-$Y$-bimodule. 
\end{prop}

\begin{proof}
Using the fact that $f$ and $g$ are both algebraic homomorphisms and $\id_A = (A \simeq \one_\CC \odot A \xrightarrow{u_X \odot \id_A} X \odot A \xrightarrow{f} A)$, $\id_A = (A \simeq A \odot \one_\CD \xrightarrow{\id_A \odot u_Y } A \odot Y \xrightarrow{g} A)$, we obtain 
\begin{align*}
m_A \circ (f \otimes \id_A) =f \circ (\id_X \odot m_A), \quad\quad\quad
m_A \circ (\id_A \otimes g) = g \circ (m_A \odot \id_Y).   
\end{align*}
Composing the second identity with $(\id_A \otimes (u_A \odot \id_Y)$, we obtain 
\be \label{equ:g_action}
m_A \circ (\id_A \otimes (g \circ (u_A \odot \id_Y))) = g. 
\ee
Then the action $g$ is associative due to the following identities: 
\begin{align*}
g \circ (g \odot \id_X) &= m_A \circ (\id_A \otimes (g \circ (u_A \odot \id_X))\circ (g \odot \id_X) = g \circ (\id_A \odot m_X),  
\end{align*}
where we have used \mref{equ:g_action} in the first step and the fact that $g$ is an algebra homomorphism in the second step. By Remark \ref{rem:left_unital_alg_action_def}, we know that the left action $f$ is also associative. 

Now, we need to how that the following diagram commutes:
\be \label{diag:XAY}
\xymatrix @R=0.2in{
(X \odot A) \odot Y \ar[d]_-{f \odot \id_Y}  \ar[rr]^-\simeq & & X \odot (A \odot Y)
 \ar[d]^{\id_X \odot g} \\
A \odot Y \ar[r]^-{g} & A   & X\odot A  \ar[l]_-f
}
\ee
And the commutativity of the diagram follows from the following identities: 
\begin{align*}
g \circ (f \odot \id_Y) &=  m_A \circ (\id_A  \otimes  (g\circ (u_A \odot \id_Y)) \circ (f \odot \id_Y)  \\
&= m_A \circ (f \otimes \id_A) \circ (\id_{X\odot A} \otimes (g\circ (u_A \odot \id_Y)) \\
&= f \circ (\id_X \odot m_A) \circ (\id_{X\odot A} \otimes (g\circ (u_A \odot \id_Y)) \\
&= f \circ (\id_X \odot (m_A \circ (\id_A \otimes (g\circ (u_A \odot \id_Y))))) \\
&= f \circ (\id_X \odot g), 
\end{align*}
where we have used $(\id \otimes h') \circ (h\otimes \id) = (h\otimes \id) \circ (\id \otimes h')$ in the second step.
\end{proof}
}

\begin{defn} \label{def:left-right-full-centers}
Let $\CM$ be a monoidal right $\CD$-module and $A\in\Alg(\CM)$. The {\em right center} of $A$ in $\CD$ is a pair $(Z_\CD(A),m)$, where $Z_\CD(A)\in\Alg(\CD)$ and  $m: A \odot Z_\CD(A) \to A$ is a unital $Z_\CD(A)$-action on $A$, such that it is terminal among all such pairs.
In the special case where $\CD=\FZ(\CM)$, $Z_\CD(A)$ is also denoted by $Z(A)$, called the {\em full center} of $A$.

For $A \in \Alg(\CN)$ where $\CN$ is a monoidal left $\CC$-module, the {\em left center} of $A$ in $\CC$ is defined to be the right center of $A$ in $\overline{\CC}$ by regarding $\CN$ as a monoidal right $\overline{\CC}$-module. 
\end{defn}

\begin{rem}
The right center $Z_\CD(A)$ is equipped with a canonical algebra homomorphism $\phi_\CM(Z_\CD(A)) \to A$ given by the composition $\phi_\CM(Z_\CD(A)) \simeq \one_\CM\odot Z_\CD(A) \xrightarrow{u_A\odot\id_{Z_\CD(A)}} A\odot Z_\CD(A) \xrightarrow{m} A$.
\end{rem}

\begin{rem}\label{rem:basic_fact_center}
The universal properties of the left and right centers of $A$ can be illustrated by the following diagrams, respectively: 
\be \label{diag:universal-property}
\raisebox{4em}{
\xymatrix@R=0.5em@C=0.8em{
&  & Z_\CC(A) \odot A \ar@/^1.5pc/[rrdddd]^{m} &  & \\
& & &  & \\
& & X \odot A \ar[uu]^{\exists !\, \underline{f} \odot \id_A} \ar[rrdd]^f &  & \\
& &  & & \\
\one_\CC\odot A \ar[uurr]^{u_X \odot \id_A} \ar@/^1.5pc/[uuuurr]^{u_{Z_\CC(A)} \odot \id_A}  \ar[rrrr]^{\sim} & & & & A\, ,
}}
\quad\quad
\raisebox{4em}{
\xymatrix@R=0.5em@C=0.8em{
&  & A \odot Z_\CD(A)  \ar@/^1.5pc/[rrdddd]^{m} &  & \\
& & &  & \\
& & A \odot Y \ar[uu]^{\exists !\,  \id_A \odot \underline{g}} \ar[rrdd]^g &  & \\
& &  & & \\
A\odot\one_\CD \ar[uurr]^{\id_A \odot u_Y} \ar@/^1.5pc/[uuuurr]^{ \id_A\odot u_{Z_\CD(A)}}  \ar[rrrr]^{\sim} & & & & A\, ,
}}
\ee
where $m, f, g$ are all algebra homomorphisms. 
\end{rem}


\begin{expl}
When $\CM=\bk$, an algebra $A$ in $\CM$ is just an ordinary finite-dimensional $k$-algebra. The usual center $Z(A)$ of $A$ is defined as the subalgebra $Z(A)=\{ z\in A \mid az=za, \forall a\in A\}$. Let $m: A \otimes_k Z(A) \to A$ be the multiplication map, which clearly defines a unital $Z(A)$-action on $A$. It is easy to check that the pair $(Z(A), m)$ is the right (and left because $\bk$ is symmetric) center of $A$ defined above. 
\end{expl}

\begin{rem}
In the special case $\CM=\CC$, the notion of left/right center of an algebra $A$ in $\CC$ was introduced by Ostrik \cite[Definition\,15]{ostrik}. In fact, $\CC$ is a monoidal $\CC$-$\CC$-bimodule defined by the evident braided monoidal functor $\overline{\CC} \boxtimes \CC \to \FZ(\CC)$. For an algebra $A\in\Alg(\CC)$, the notion of left (resp. right) center of $A$ in $\CC$ defined here coincides with that of right (resp. left) center of $A$ defined by Ostrik . 
\end{rem}

\begin{rem}
The full center $Z(A)$ coincides with the one introduced by Davydov \cite{davydov}. We will explain this in Appendix \ref{sec:davydov}. 
\end{rem}

\subsection{Centers as internal homs}



Let $\CC$ and $\CD$ be finite braided monoidal categories. 
If $\CM$ is a monoidal right $\CD$-module then, for any algebra $A\in\Alg(\CM)$, ${}_A \CM_A$ is a monoidal right $\CD$-module defined by the braided monoidal functor $\CD\to\FZ({}_A \CM_A)$, $d\mapsto A\odot d$. It follows that $\Alg({_A}\CM_A)$ is a right $\Alg(\CD)$-module.
Similarly, if $\CM$ is a monoidal left $\CC$-module, then $\Alg({_A}\CM_A)$ is a left $\Alg(\CC)$-module for any algebra $A\in\Alg(\CM)$.

\void{

\begin{lem}\label{lem:des_alg_in_bimodule_cat}
Let $A$ be an algebra in a finite monoidal category $\CC$. Giving an algebra $B$ in ${}_A \CC_A$ is equivalent to giving an algebra $B$ in $\CC$ together with an algebra homomorphism $u:A\to B$ in $\CC$. 
\end{lem}

\begin{proof}
Let $B$ be an algebra in ${}_A \CC_A$ with unit $u:A\to B$ and multiplication $m: B \otimes_A B \to B$. We define two morphisms in $\CC$ as follows: 
    \begin{align*}
        \overline{u}: \one_\CC \xrightarrow{u_A} A \xrightarrow{u} B,  \qquad \overline{m}:  B \otimes B \twoheadrightarrow B \otimes_A B \xrightarrow{m} B. 
    \end{align*}
\void{
The unity property of $B$ induces the following commutative diagram:
    \begin{align}\label{diag:module_action_by_alg_hom}
        \xymatrix @R=0.2in{
            A \otimes B \ar[r] \ar[rd]_{u \otimes \id_B}  &B & B \otimes A \ar[l]\ar[ld]^{\id_B \otimes u}\\
            & B \otimes B \ar[u]^{\overline{m}}& 
        }
    \end{align}
    where the horizontal arrows are given by the $A$-$A$-bimodule structure of $B$. 
}
It is easy to check that the triple $(B, \overline{u}, \overline{m})$ define an algebra in $\CC$ and that $u$ defines an algebra homomorphism in $\CC$.

    Conversely, let $B$ be an algebra in $\CC$ with multiplication $\overline{m}:B\otimes B\to B$  and let $u: A \to B$ be an algebra homomorphism in $\CC$. Then $u$ induces an $A$-$A$-bimodule structure on $B$. Let $m: B\otimes_A B \to B$ be the unique morphism determined by $\overline{m}$ (so that $\overline{m}$ is factored as the composition $B\otimes B\twoheadrightarrow B\otimes_A B \xrightarrow{m} B$). It is clear that $m$ is an $A$-$A$-bimodule map and that the triple $(B, u, m)$ defines an algebra in ${}_A \CC_A$.
\end{proof}
}

\begin{lem}\label{lem:alg_hom_B_to_A}
Let $A$ be an algebra in a finite monoidal category $\CM$ and $B$ an algebra in ${}_A \CM_A$. Giving an algebra homomorphism $h:B\to A$ in ${}_A \CM_A$ is equivalent to giving an algebra homomorphism $h:B\to A$ in $\CM$ such that the composition $A \xrightarrow{u_B} B \xrightarrow{h} A$ is $\id_A$.
\end{lem}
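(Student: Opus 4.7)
The plan is to unpack the standard equivalence between algebras in the bimodule category ${}_A \CM_A$ and pairs consisting of an algebra in $\CM$ together with an algebra homomorphism from $A$. Concretely, an algebra $(B, u_B, m_B)$ in ${}_A \CM_A$ corresponds to an algebra $(B, \bar u_B, \bar m_B)$ in $\CM$ where $\bar u_B := u_B \circ u_A : \one_\CM \to B$ and $\bar m_B : B\otimes B \to B$ is the composite $B\otimes B \xrightarrow{\pi_B} B\otimes_A B \xrightarrow{m_B} B$, with $\pi_B$ the canonical coequalizer. A key point I will use throughout is that $A$ is the tensor unit of ${}_A \CM_A$, so its algebra structure there has unit $\id_A : A \to A$ and multiplication the canonical isomorphism $A\otimes_A A \simeq A$.

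For the forward direction, I take an algebra homomorphism $h : B \to A$ in ${}_A \CM_A$. Unit preservation in ${}_A \CM_A$ immediately gives $h \circ u_B = \id_A$. That $h$ is also an algebra homomorphism in $\CM$ is then formal: the unit condition $h \circ \bar u_B = u_A$ reads $h \circ u_B \circ u_A = u_A$, which follows from $h \circ u_B = \id_A$. Multiplicativity in $\CM$, namely $h \circ \bar m_B = m_A \circ (h \otimes h)$, amounts to $h \circ m_B \circ \pi_B = m_A \circ (h \otimes h)$, which I would verify by a short chase combining multiplicativity in ${}_A \CM_A$, i.e. $h\circ m_B = (A\otimes_A A \simeq A)\circ (h\otimes_A h)$, with the naturality identity $(h \otimes_A h) \circ \pi_B = \pi_A \circ (h \otimes h)$.

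For the reverse direction, I assume $h : B \to A$ is an algebra homomorphism in $\CM$ with $h \circ u_B = \id_A$. First I would check that $h$ is an $A$-$A$-bimodule map: the left $A$-action on $B$ is $\bar m_B \circ (u_B \otimes \id_B)$, and post-composing with $h$ yields $m_A \circ (h \otimes h) \circ (u_B \otimes \id_B) = m_A \circ (\id_A \otimes h)$ using $h \circ u_B = \id_A$, which matches the left $A$-action on $A$; the right action case is symmetric. Unit preservation in ${}_A \CM_A$ is precisely the hypothesis $h \circ u_B = \id_A$. The remaining step, and the only one where I expect a real (though mild) obstacle, is multiplicativity in ${}_A \CM_A$: I must show $h \circ m_B = (A\otimes_A A \simeq A) \circ (h \otimes_A h)$ as morphisms out of $B\otimes_A B$. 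The difficulty is that $m_B$ is characterized only through its precomposition with the coequalizer $\pi_B$. However, after precomposing both sides with $\pi_B$, each collapses to $m_A \circ (h \otimes h)$ by the same naturality identity used in the forward direction; since $\pi_B$ is an epimorphism, the equality descends to $B\otimes_A B$, which finishes the proof.
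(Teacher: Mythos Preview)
Your proof is correct and follows essentially the same route as the paper's: both directions hinge on the commutative-square argument relating $\bar m_B = m_B\circ\pi_B$ to $m_A$ via the naturality $(h\otimes_A h)\circ\pi_B = \pi_A\circ(h\otimes h)$, and on $\pi_B$ being an epimorphism to descend multiplicativity in the reverse direction. The only cosmetic difference is that the paper phrases $h\circ u_B=\id_A$ in the forward direction as ``$A$ is initial in $\Alg({}_A\CM_A)$'', which is exactly your unit-preservation observation.
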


\begin{proof}
Let $h:B\to A$ be an algebra homomorphism in ${}_A \CM_A$. Then the right square of the following diagram is commutative:
    \begin{align}\label{diag:alg_hom_B_to_A_diag_1}
        \xymatrix @R=0.2in{
            B \otimes B \ar@{->>}[r] \ar[d]^{h \otimes h}& B \otimes_A B \ar[r]^-{m_B} \ar[d]^{h \otimes_A h} &B \ar[d]^{h}\\
            A \otimes A \ar@{->>}[r] & A \otimes_A A \ar[r]^-\sim &A .
        }
    \end{align}
Since $h$ is an $A$-$A$-bimodule map, the left square is also commutative. The commutativity of the outer square then states that $h$ defines an algebra homomorphism in $\CM$.
Since $A$ is an initial object of $\Alg({}_A \CM_A)$, we have $h\circ u_B=\id_A$.

Conversely, let $h:B\to A$ be an algebra homomorphism in $\CM$ such that $h\circ u_B=\id_A$. Then the following diagram commutes:
    \begin{align*}
        \xymatrix{
            A \otimes B \ar[r]^{u_B \otimes \id_B} \ar[rd]_{\id_A \otimes h} 
            & B \otimes B \ar[r] \ar[d]^{h \otimes h} & B \ar[d]^{h} & B \otimes B \ar[l] \ar[d]^{h \otimes h}& 
            B \otimes A \ar[l]_{\id_B \otimes u_B} \ar[ld]^{h \otimes \id_A}\\
            & A \otimes A \ar[r] & A & \ar[l] A \otimes A .
        }
    \end{align*}
Thus $h$ is an $A$-$A$-bimodule map. Since the outer and the left squares of Diagram \eqref{diag:alg_hom_B_to_A_diag_1} are commutative, so is the right one. That is, $h$ defines an algebra homomorphism in ${}_A \CM_A$.
\end{proof}

\begin{thm} \label{thm:left-center-definition-2}
Let $\CM$ be a monoidal right $\CD$-module and $A \in \Alg(\CM)$. Then the right center $Z_\CD(A)$ is the internal hom $[\one_{{_A}\CM_A},\one_{{_A}\CM_A}]_{\Alg(\CD)^\rev}$, where $\one_{{_A}\CM_A}=A$ is the trivial algebra in ${_A}\CM_A$. 
\end{thm}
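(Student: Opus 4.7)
The plan is to recognize the universal property of the right center from Remark \ref{rem:basic_fact_center} as precisely the universal property of the internal hom $[A,A]_{\Alg(\CD)^\rev}$, once one transcribes the notions using Lemma \ref{lem:alg_hom_B_to_A}. The right $\Alg(\CD)$-module structure on $\Alg({}_A\CM_A)$ induces a left $\Alg(\CD)^\rev$-module structure via $Y\odot A\dfeq A\odot Y$, so what needs to be shown is a natural bijection
$$\{\mbox{unital } Y\mbox{-actions on } A\} \;\simeq\; \Hom_{\Alg({}_A\CM_A)}(A\odot Y,\,A),$$
natural in $Y\in\Alg(\CD)$, under which the terminal object on the right maps to the terminal pair $(Z_\CD(A),m)$ on the left.

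To establish this bijection, note that the algebra $A\odot Y$ in ${}_A \CM_A$ has unit morphism $u_{A\odot Y}$ equal to the composition $A\simeq A\odot\one_\CD\xrightarrow{\id_A\odot u_Y}A\odot Y$. By Lemma \ref{lem:alg_hom_B_to_A} applied with $B=A\odot Y$, an algebra homomorphism $A\odot Y\to A$ in ${}_A \CM_A$ is the same datum as an algebra homomorphism $h:A\odot Y\to A$ in $\CM$ such that $h\circ u_{A\odot Y}=\id_A$. The latter equation is exactly the unit condition that distinguishes a unital $Y$-action from a mere algebra map in $\Alg(\CM)$, so the bijection is immediate.

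With this identification in hand, the universal property of the right center—for every unital $Y$-action $f:A\odot Y\to A$ there exists a unique algebra homomorphism $\underline f:Y\to Z_\CD(A)$ in $\Alg(\CD)$ satisfying $m\circ(\id_A\odot\underline f)=f$—translates directly into the terminal property of the pair $([A,A]_{\Alg(\CD)^\rev},\ev_A)$ in the comma category $(\Alg(\CD)^\rev\odot A\downarrow A)$, since as plain categories $\Alg(\CD)^\rev$ and $\Alg(\CD)$ coincide. The main (and essentially only) obstacle is bookkeeping: one must unambiguously match the left/right/$\rev$ conventions—in particular, why the internal hom lives in $\Alg(\CD)^\rev$ rather than $\Alg(\CD)$—and verify that the algebra structure on $A\odot Y$ encoded by the central functor $\phi_\CM$ makes the universal triangle $m\circ(\id_A\odot\underline f)=f$ agree with the evaluation triangle $\ev_A\circ(\underline f\odot\id_A)=f$. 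Once the dictionary $Y\odot B\dfeq B\odot Y$ for the left $\Alg(\CD)^\rev$-action on $\Alg({}_A \CM_A)$ is fixed, the theorem reduces entirely to the bijection above.
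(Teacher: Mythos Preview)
Your proposal is correct and follows essentially the same approach as the paper: use Lemma~\ref{lem:alg_hom_B_to_A} to identify unital $Y$-actions on $A$ with algebra homomorphisms $A\odot Y\to A$ in ${}_A\CM_A$, and then observe that this identifies the universal property defining $Z_\CD(A)$ with that of the internal hom $[A,A]_{\Alg(\CD)^\rev}$. The paper's proof is a two-sentence version of exactly this argument; your additional bookkeeping about the $\Alg(\CD)^\rev$ convention and the matching of evaluation triangles is correct elaboration of what the paper leaves implicit.
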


\begin{proof}
According to Lemma \ref{lem:alg_hom_B_to_A}, giving an algebra homomorphism $A\odot X \to A$ in ${_A}\CM_A$ is equivalent to giving a unital $X$-action on $A$ for $X\in\Alg(\CD)$. That is, the internal hom $[A,A]_{\Alg(\CD)^\rev}$ and the right center $Z_\CD(A)$ share the same universal property.
\end{proof}

\begin{rem}
Similarly, if $\CM$ is a monoidal left $\CC$-module, then the left center $Z_\CC(A)$ of $A \in \Alg(\CM)$ is the internal hom $[\one_{{_A}\CM_A},\one_{{_A}\CM_A}]_{\Alg(\CC)}$.
\end{rem}

\begin{cor} \label{cor:morita-invariant}
Morita equivalent algebras share the same left and right centers. In other words, left and right centers are Morita invariants. 
\end{cor}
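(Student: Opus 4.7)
The plan is to leverage Theorem \ref{thm:left-center-definition-2}, which expresses the right center as an internal hom depending only on the monoidal $\CD$-module ${_A}\CM_A$ together with its tensor unit. Morita equivalence of algebras $A, B \in \Alg(\CM)$ is (by definition, or by the standard characterization) a monoidal equivalence ${_A}\CM_A \simeq {_B}\CM_B$, so one expects both centers to be realized as the same internal hom after transport of structure.

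First, I would spell out what Morita equivalence means here: $A$ and $B$ are Morita equivalent when there is a $k$-linear monoidal equivalence $\Phi: {_A}\CM_A \xrightarrow{\sim} {_B}\CM_B$. Such a $\Phi$ automatically sends the tensor unit $A$ of ${_A}\CM_A$ to the tensor unit $B$ of ${_B}\CM_B$ (up to canonical isomorphism). Next, I would verify that $\Phi$ is an equivalence of monoidal right $\CD$-modules, where ${_A}\CM_A$ and ${_B}\CM_B$ carry the right $\CD$-module structures given respectively by $d \mapsto A \odot d$ and $d \mapsto B \odot d$ (as described in the paragraph preceding Theorem \ref{thm:left-center-definition-2}). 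Compatibility with $\CD$ is automatic because the $\CD$-action on each side is governed precisely by the tensor unit, which is preserved by $\Phi$.

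Applying the monoidal equivalence $\Alg(\Phi): \Alg({_A}\CM_A) \to \Alg({_B}\CM_B)$, which is moreover an equivalence of right $\Alg(\CD)$-modules, I would conclude that the internal hom is preserved:
\[
[\one_{{_A}\CM_A},\one_{{_A}\CM_A}]_{\Alg(\CD)^\rev} \simeq [\one_{{_B}\CM_B},\one_{{_B}\CM_B}]_{\Alg(\CD)^\rev}
\]
in $\Alg(\CD)^\rev$. By Theorem \ref{thm:left-center-definition-2}, the left-hand side is $Z_\CD(A)$ and the right-hand side is $Z_\CD(B)$, giving an isomorphism $Z_\CD(A) \simeq Z_\CD(B)$ in $\Alg(\CD)$. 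The argument for left centers is identical, using $\overline{\CC}$ in place of $\CD$.

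The only subtle point — and the step I would treat most carefully — is checking that the Morita equivalence $\Phi$ really respects the monoidal $\CD$-module structure, rather than just the underlying monoidal structure. This reduces to the tautology that the $\CD$-action factors through the tensor unit via $d \mapsto \one \odot d$, so any monoidal equivalence sending unit to unit intertwines the actions up to the coherence data of $\Phi$. After that, everything is a formal consequence of the universal property of the internal hom.
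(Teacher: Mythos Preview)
Your approach is exactly the paper's implicit one: Corollary~\ref{cor:morita-invariant} is stated without proof, as an immediate consequence of Theorem~\ref{thm:left-center-definition-2}, along precisely the lines you describe. However, your treatment of the subtle point you yourself flag contains a real gap.

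It is \emph{not} true that an arbitrary monoidal equivalence $\Phi: {_A}\CM_A \to {_B}\CM_B$ automatically intertwines the induced monoidal right $\CD$-module structures. The $\CD$-action on ${_A}\CM_A$ is the braided monoidal functor $\CD \to \FZ({_A}\CM_A)$, $d \mapsto A \odot d$, where $\odot$ is the \emph{extrinsic} $\CD$-action on $\CM$ coming from $\phi_\CM:\CD\to\FZ(\CM)$; it is not determined by the abstract monoidal category ${_A}\CM_A$ together with its tensor unit. For a concrete failure, take $A=B=\one_\CM$: then ${_A}\CM_A={_B}\CM_B=\CM$, and any nontrivial monoidal autoequivalence $\Phi$ of $\CM$ that does not commute with $\phi_\CM$ fails to be a $\CD$-module equivalence, even though it certainly sends unit to unit.

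The fix is to use the \emph{specific} equivalence supplied by Morita equivalence. An invertible $A$-$B$-bimodule $P$ in $\CM$ gives $\Phi(X)=P^{-1}\otimes_A X\otimes_A P$, and using the half-braiding of $\phi_\CM(d)\in\FZ(\CM)$ one computes directly
\[
\Phi(A\odot d)\;\simeq\; P^{-1}\otimes_A(\phi_\CM(d)\otimes P)\;\simeq\;(P^{-1}\otimes_A P)\otimes\phi_\CM(d)\;\simeq\; B\odot d,
\]
compatibly with the half-braidings in $\FZ({_B}\CM_B)$. With this correction your argument goes through and matches the paper's intended one-line deduction from Theorem~\ref{thm:left-center-definition-2}.
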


Given a braided monoidal category $\CB$, we use $\CAlg(\CB)$ to denote the category of commutative algebras in $\CB$. The following result is a special case of a general fact proved by Lurie \cite{lurie}. For the reader's convenience, we briefly unravel the proof. 

\begin{lem} \label{lem:CAlg-AlgAlg}
$\CAlg(\CB)=\Alg(\Alg(\CB))$, where ``$=$'' means canonically isomorphic as categories. 
\end{lem}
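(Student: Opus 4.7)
The plan is to apply the braided Eckmann--Hilton argument, tracked carefully through the definitions. An object of $\Alg(\Alg(\CB))$ consists of an algebra $(A,u_1,m_1)\in\Alg(\CB)$ together with a second algebra structure in $\Alg(\CB)$: a unit $u_2\colon\one_{\Alg(\CB)}\to A$ and a multiplication $m_2\colon A\otimes A\to A$, both of which must be algebra homomorphisms in $\CB$, where on $A\otimes A$ the ``outer'' algebra structure uses the braiding, namely $m_{A\otimes A}=(m_1\otimes m_1)\circ(\id\otimes c_{A,A}\otimes \id)$ and $u_{A\otimes A}=u_1\otimes u_1$. Since $\one_{\Alg(\CB)}=\one_\CB$ with its unique algebra structure, the condition that $u_2$ is an algebra homomorphism forces $u_2=u_1=:u$.

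The condition that $m_2$ is an algebra homomorphism gives the interchange law
\[
m_2\circ(m_1\otimes m_1)\circ(\id\otimes c_{A,A}\otimes\id)=m_1\circ(m_2\otimes m_2),
\]
and compatibility with units gives $m_2\circ(u\otimes u)=u$. First I would precompose the interchange with $\id_A\otimes u\otimes u\otimes\id_A$; using $c_{A,A}\circ(u\otimes u)=u\otimes u$ and the unit axioms for $m_1$ and $m_2$, the LHS collapses to $m_2$ and the RHS to $m_1$, so $m_1=m_2=:m$. Next I would precompose with $u\otimes\id_A\otimes\id_A\otimes u$; the unit axioms on the outside give LHS $=m\circ c_{A,A}$ and RHS $=m$, so $m\circ c_{A,A}=m$, i.e.\ $m$ is commutative. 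Thus the object data of $\Alg(\Alg(\CB))$ reduces to a commutative algebra in $\CB$.

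For the converse, a commutative algebra $(A,u,m)\in\CAlg(\CB)$ canonically produces an object of $\Alg(\Alg(\CB))$ by taking the inner structure to coincide with the outer one: $u$ is tautologically an algebra homomorphism, and $m$ is an algebra homomorphism because the interchange law follows from commutativity together with associativity and unitality (a short diagrammatic check). The associativity of the inner algebra structure in $\Alg(\CB)$ is inherited from that of $m$ in $\CB$. For morphisms, a morphism in $\Alg(\Alg(\CB))$ is a morphism in $\CB$ that is an algebra homomorphism for both the outer and inner structures; since these two structures are identified above, the morphism sets on both sides are the same subset of $\Hom_\CB(A,A')$.

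I expect no serious obstacle here --- the statement is a formal instance of Eckmann--Hilton in a braided setting, and the entire argument is a bookkeeping exercise with the interchange law and its specializations to the unit. The only minor care is to track that the tensor product $A\otimes A$ in $\Alg(\CB)$ genuinely uses the braiding $c_{A,A}$ as displayed in the definition of $\Alg(\CC)$'s monoidal structure given just before the lemma, so that the interchange identity one arrives at is the correct one to feed into the Eckmann--Hilton calculation.
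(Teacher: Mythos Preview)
Your proposal is correct and follows essentially the same Eckmann--Hilton argument as the paper: derive the interchange law from the requirement that $m_2$ be an algebra homomorphism, then specialize it at $\id\otimes u\otimes u\otimes\id$ and $u\otimes\id\otimes\id\otimes u$ to force $m_1=m_2$ and commutativity. Your treatment is slightly more thorough in that you also address morphisms explicitly, whereas the paper leaves that implicit.
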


\begin{proof}
Suppose that $B\in\Alg(\Alg(\CB))$. We use $u:\one_\CB\to B$ and $m: B \otimes B \to B$ to denote the unit and multiplication of $B$ as an algebra in $\Alg(\CB)$, and use $u_B:\one_\CB\to B$ and $m_B: B \otimes B \to B$ to denote the unit and multiplication of $B$ as an algebra in $\CB$.
Since $u$ is an algebra homomorphism, we have $u=u_B$. Since $m$ is an algebra homomorphism, we have
\be \label{eq:h-iota_A}
m_B \circ (m\otimes m) = m \circ (m_B\otimes m_B) \circ (\id_B\otimes c_{B,B}\otimes\id_B).
\ee
Composing both sides of \eqref{eq:h-iota_A} with $\id_B\otimes u\otimes u\otimes\id_B$ from the right, we obtain $m_B=m$. 
Composing both sides of \eqref{eq:h-iota_A} with $u\otimes \id_B\otimes\id_B\otimes u$ from the right, we obtain $m_B=m\circ c_{B,B}$.
It follows that $B$ is a commutative algebra in $\CB$, i.e. $m_B=m_B\circ c_{B,B}$.

Conversely, suppose that $B\in\CAlg(\CB)$. We have 
$$m_B \circ (m_B\otimes m_B) = m_B \circ (m_B\otimes m_B) \circ (\id_B\otimes c_{B,B}\otimes\id_B)$$
which amounts to that $m_B: B \otimes B \to B$ is an algebra homomorphism. Thus the triple $(B,u_B,m_B)$ define an object of $\Alg(\Alg(\CB))$.
The above two constructions are clearly inverse to each other.  
\end{proof}

\begin{cor} \label{cor:ZA-commutative}
Left and right centers are commutative algebras. 
\end{cor}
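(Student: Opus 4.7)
The plan is to combine Theorem \ref{thm:left-center-definition-2} with Lemma \ref{lem:CAlg-AlgAlg}: the right center is on the one hand an object of $\Alg(\CD)$ and on the other hand an internal-hom algebra in $\Alg(\CD)^\rev$, so the two algebra structures it carries together exhibit it as a commutative algebra by an Eckmann--Hilton-type argument that has already been packaged as Lemma \ref{lem:CAlg-AlgAlg}.

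First, by Theorem \ref{thm:left-center-definition-2}, the right center $Z_\CD(A)$ is realized as the internal hom $[A,A]_{\Alg(\CD)^\rev}$ in the monoidal category $\Alg(\CD)^\rev$. It is a standard general fact about internal homs that, for any enriching monoidal category $\CE$, the object $[x,x]_\CE$ carries a canonical algebra structure in $\CE$, with unit corresponding to $\id_x$ and multiplication corresponding to composition under the adjunction defining $[x,x]_\CE$. Applying this to $\CE=\Alg(\CD)^\rev$, the object $Z_\CD(A)$ acquires a canonical algebra structure in $\Alg(\CD)^\rev$. Using the identification $\Alg(\CD)^\rev\simeq\Alg(\overline{\CD})$ recorded at the start of this subsection, we therefore obtain $Z_\CD(A)\in\Alg(\Alg(\overline{\CD}))$.

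Next, I invoke Lemma \ref{lem:CAlg-AlgAlg} with $\CB=\overline{\CD}$ to conclude that $Z_\CD(A)\in\CAlg(\overline{\CD})$, i.e.\ $Z_\CD(A)$ is commutative with respect to the reversed braiding $c^{-1}$ of $\overline{\CD}$. Since the relation $m_{Z_\CD(A)}\circ c_{Z_\CD(A),Z_\CD(A)}=m_{Z_\CD(A)}$ is equivalent to $m_{Z_\CD(A)}\circ c_{Z_\CD(A),Z_\CD(A)}^{-1}=m_{Z_\CD(A)}$, this is the same as being commutative in $\CD$, so $Z_\CD(A)\in\CAlg(\CD)$. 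The claim for the left center is immediate from its definition as the right center in $\overline{\CC}$.

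The only point requiring care is the compatibility of the two algebra structures on $Z_\CD(A)$: the one it inherits from being an object of $\Alg(\CD)$ (i.e.\ its unit $u$ and multiplication $m$ in $\CD$), and the one supplied by the internal-hom construction (whose unit and multiplication must be algebra homomorphisms in $\CD$). Showing these structures are exactly the two pieces of data that Lemma \ref{lem:CAlg-AlgAlg} starts from is a routine unraveling of the universal property in Diagram \eqref{diag:universal-property}; I do not anticipate a real obstacle here, since the heavy lifting has already been done in Lemma \ref{lem:CAlg-AlgAlg}.
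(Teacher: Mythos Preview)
Your proof is correct and follows essentially the same route as the paper: identify $Z_\CD(A)$ with an internal hom in $\Alg(\CD)^\rev$ via Theorem~\ref{thm:left-center-definition-2}, use that $[x,x]$ is always an algebra in the enriching category, and then apply Lemma~\ref{lem:CAlg-AlgAlg}. The only difference is that you take a small detour through $\overline{\CD}$, whereas the paper observes directly that an algebra in $\Alg(\CD)^\rev$ is the same thing as an algebra in $\Alg(\CD)$ (since for any monoidal category $\CE$ one has $\Alg(\CE^\rev)=\Alg(\CE)$ on the nose), landing immediately in $\Alg(\Alg(\CD))=\CAlg(\CD)$; your final paragraph about compatibility is therefore unnecessary, as this is automatic from the definition of an object in $\Alg(\Alg(\CD))$.
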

\begin{proof}
In the situation of Theorem \ref{thm:left-center-definition-2}, the right center $Z_\CD(A) \simeq [\one_{{_A}\CM_A},\one_{{_A}\CM_A}]_{\Alg(\CD)^\rev}$ belongs to $\Alg(\Alg(\CD))=\CAlg(\CD)$ hence is commutative. The same is true for left center.
\end{proof}

Let $\CM$ be a monoidal right $\CD$-bimodule and $A \in \Alg(\CM)$, $Y \in \Alg(\Alg(\CD))=\CAlg(\CD)$. By Theorem \ref{thm:left-center-definition-2}, endowing $A$ with the structure of a right $Y$-module is equivalent to giving an algebra homomorphism $Y\to Z_\CD(A)$, i.e. giving a unital $Y$-action $\rho: A \odot Y \to A$.

\begin{defn} \label{def:closed-AB-bimodule}
Let $\CM$ be a closed monoidal $\CC$-$\CD$-bimodule, i.e. $\phi_\CM: \overline{\CC} \boxtimes \CD \to \FZ(\CM)$ is a braided monoidal equivalence. For $X\in\CAlg(\CC)$ and $Y\in\CAlg(\CD)$, we say that an $X$-$Y$-bimodule $A$ in $\Alg(\CM)$ is {\em closed} if the associated algebra homomorphism $\phi_\CM(X\boxtimes Y) \to Z(A)$ is an isomorphism. 
\end{defn}

\subsection{Computing centers}

Let $\CC$ and $\CD$ be finite braided monoidal categories. 

\begin{lem}\label{lem:internal_hom_alg_hom}
Let $\CM$ be a monoidal right $\CD$-bimodule. Then $\ev: \one_\CM \odot [\one_\CM, \one_\CM]_\CD \to \one_\CM$ is an algebra homomorphism.  
\end{lem}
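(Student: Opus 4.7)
The plan is to verify directly the two defining properties of an algebra homomorphism: preservation of the unit and preservation of the multiplication. Write $L := [\one_\CM, \one_\CM]_\CD$ and recall that $\ev$ is the counit of the defining adjunction $\Hom_\CM(\one_\CM \odot a, \one_\CM) \simeq \Hom_\CD(a, L)$, so that the algebra structure of $L$ (unit $u_L$ and multiplication $m_L$) is uniquely characterized by the corresponding morphisms involving $\ev$.

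For the unit, the universal property of $L$ presents $u_L: \one_\CD \to L$ as the unique morphism with $\ev \circ (\id_{\one_\CM} \odot u_L)$ equal to $\id_{\one_\CM}$ after the right unitor $\one_\CM \odot \one_\CD \simeq \one_\CM$. Since the unit of the algebra $\one_\CM \odot L$ is, by construction, $u_{\one_\CM} \odot u_L = \id_{\one_\CM} \odot u_L$ composed with the same unitor, postcomposing with $\ev$ returns $u_{\one_\CM} = \id_{\one_\CM}$, as needed.

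For the multiplication, the structure on $\one_\CM \odot L$ reviewed in Section \ref{sec:left-right-full-centers} is
$$(\one_\CM \odot L) \otimes (\one_\CM \odot L) \xrightarrow{\sim} (\one_\CM \otimes \one_\CM) \odot (L \otimes L) \xrightarrow{m_{\one_\CM} \odot m_L} \one_\CM \odot L.$$
The left-hand isomorphism is built from the half-braiding of $\phi_\CM(L) \in \FZ(\CM)$ with $\one_\CM$, which is canonically trivial because $\one_\CM$ is the tensor unit of $\CM$; likewise $m_{\one_\CM}$ is just the left unitor. Using these observations and the module associator, the above composite can be rewritten (up to canonical isomorphisms) as $\id_{\one_\CM} \odot m_L$, regarded as a morphism from $(\one_\CM \odot L) \odot L$ to $\one_\CM \odot L$. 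By the very definition of $m_L$ via the adjunction one has $\ev \circ (\id_{\one_\CM} \odot m_L) = \ev \circ (\ev \odot \id_L)$, and the right-hand side is in turn $m_{\one_\CM} \circ (\ev \otimes \ev)$ after one further application of the left unitor. This yields $\ev \circ m_{\one_\CM \odot L} = m_{\one_\CM} \circ (\ev \otimes \ev)$, completing the check.

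The only real obstacle is coherence bookkeeping: one must thread the module associator, the left and right unitors of $\CM$, the monoidal constraint for $\phi_\CM$, and the canonical triviality of the half-braiding with $\one_\CM$ so that the two presentations of the multiplication match on the nose. Once that coherence is organized, the lemma reduces to the defining adjunction of the internal hom $L$.
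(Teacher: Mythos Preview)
Your proof is correct and follows essentially the same approach as the paper: both reduce the multiplication check to the defining equation $\ev\circ(\id_{\one}\odot m_L)=\ev\circ(\ev\odot\id_L)$ for the internal-hom multiplication, and then use the triviality of the half-braiding with $\one_\CM$ to identify the multiplication of $\one_\CM\odot L$ with $\id_{\one_\CM}\odot m_L$. The paper packages this as a single equivalence of commutative diagrams, while you spell out the steps and the coherence bookkeeping more explicitly, but the content is the same.
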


\begin{proof}
We abbreviate $\one_\CM$ to $\one$ and $[\one_\CM, \one_\CM]_\CD$ to $[\one,\one]$ for simplicity.
It is clear that $\ev$ preserves unit.
By definition, the multiplication $m_{[\one,\one]}$ of the algebra $[\one,\one]$ is the unique morphism rendering the following diagram commutative:
$$\xymatrix@R=1.5em{
  \one\odot([\one,\one]\otimes[\one,\one]) \ar[rr]^-{\id_\one\odot m_{[\one,\one]}} \ar[d]^\sim & & \one\odot[\one,\one] \ar[d]^\ev \\
  (\one\odot[\one,\one])\odot[\one,\one] \ar[r]^-{\ev\odot\id_{[\one,\one]}} & \one\odot[\one,\one] \ar[r]^-\ev & \one .
}
$$
Identifying $\one\odot d=\one\otimes\phi_\CM(d)$ with $\phi_\CM(d)$ for $d\in\CD$, we see that the above diagram is equivalent to the following one:
$$\xymatrix@R=1.5em{
  (\one\odot[\one,\one])\otimes(\one\odot[\one,\one]) \ar[d]^{\ev\otimes\ev} \ar[r]^-\sim & (\one\otimes\one)\odot([\one,\one]\otimes[\one,\one]) \ar[r]^-{m_\one\odot m_{[\one,\one]}} & \one\odot[\one,\one] \ar[d]^\ev \\
  \one\otimes\one \ar[rr]^-{m_\one} & & \one .
}
$$
Therefore, $m_{[\one,\one]}$ is the unique morphism rendering $\ev$ an algebra homomorphism.    
\end{proof}

\begin{prop}\label{lem:internal_hom_to_alg_internal_hom}
Let $\CM$ be a monoidal right $\CD$-bimodule. 
We have a canonical algebra isomorphism
$$
[\one_\CM, \one_\CM]_{\Alg(\CD)^\rev} \simeq [\one_\CM, \one_\CM]_\CD,
$$
where $\one_\CM$ is regarded as an object of $\Alg(\CM)$ on the left hand side, an object of $\CM$ on the right hand side.
\end{prop}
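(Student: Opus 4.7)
\medskip
\noindent\textbf{Proof proposal.} The plan is to exhibit mutually inverse algebra homomorphisms between the two centers by leveraging their respective universal properties. By Theorem \ref{thm:left-center-definition-2}, the left hand side is the right center $Z_\CD(\one_\CM)$, carrying a universal unital action $m: \one_\CM \odot Z_\CD(\one_\CM) \to \one_\CM$; the right hand side is an internal hom with its canonical algebra structure in $\CD$ and evaluation $\ev: \one_\CM \odot [\one_\CM,\one_\CM]_\CD \to \one_\CM$.

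First I would construct the two maps. Lemma \ref{lem:internal_hom_alg_hom} asserts that $\ev$ is an algebra homomorphism in $\CM$, and by the very definition of the unit of the internal hom algebra, $\ev \circ (\id_{\one_\CM} \odot u_{[\one_\CM,\one_\CM]_\CD}) = \id_{\one_\CM}$; thus $([\one_\CM,\one_\CM]_\CD, \ev)$ is a unital action, so the universal property of $Z_\CD(\one_\CM)$ yields a unique algebra homomorphism $\varphi: [\one_\CM, \one_\CM]_\CD \to Z_\CD(\one_\CM)$ with $m \circ (\id \odot \varphi) = \ev$. In the opposite direction, forgetting that $m$ is an algebra map, the universal property of the internal hom yields a unique $\CD$-morphism $\psi: Z_\CD(\one_\CM) \to [\one_\CM, \one_\CM]_\CD$ with $\ev \circ (\id \odot \psi) = m$.

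Next I would verify mutual inverseness. The relation $\psi \circ \varphi = \id_{[\one_\CM,\one_\CM]_\CD}$ is immediate from the uniqueness clause of the internal hom universal property applied to $\ev$ itself. To obtain $\varphi \circ \psi = \id_{Z_\CD(\one_\CM)}$ from the uniqueness clause of the right center universal property (which requires an algebra homomorphism), I first need $\psi$ to be an algebra homomorphism. Unit preservation is formal: $\psi \circ u_{Z_\CD(\one_\CM)}$ and $u_{[\one_\CM, \one_\CM]_\CD}$ both become $\id_{\one_\CM}$ under $\ev \circ (\id \odot -)$, hence agree. Multiplicativity reduces, via the characterization $\ev \circ (\id \odot m_{[\one_\CM, \one_\CM]_\CD}) = \ev \circ (\ev \odot \id)$ of the internal hom algebra multiplication (as used in the proof of Lemma \ref{lem:internal_hom_alg_hom}), to the associativity identity $m \circ (\id_{\one_\CM} \odot m_{Z_\CD(\one_\CM)}) = m \circ (m \odot \id_{Z_\CD(\one_\CM)})$ of $m$ viewed as a right action.

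The chief technical obstacle is extracting this last associativity, which is not automatic from $m$ being merely a morphism in $\CM$. The input is that $m$ is an algebra homomorphism in $\CM$ out of $\one_\CM \odot Z_\CD(\one_\CM)$ with the algebra structure described in Section \ref{sec:left-right-full-centers}. Unpacking that structure, and using that the half-braiding between $\phi_\CM(Z_\CD(\one_\CM))$ and the tensor unit $\one_\CM$ is trivial, the algebra homomorphism condition on $m$ becomes $m \circ \phi_\CM(m_{Z_\CD(\one_\CM)}) = m \otimes m$ (after canonical identifications); combined with the tautology $m \otimes m = m \circ (m \otimes \id)$ that holds because the target $\one_\CM \otimes \one_\CM$ is canonically identified with $\one_\CM$, this yields the desired associativity and completes the argument.
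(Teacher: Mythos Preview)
Your argument is correct but organized differently from the paper's. The paper proves a slightly stronger statement: that the ordinary adjunction $\Hom_\CM(\one\odot Y,\one)\simeq\Hom_\CD(Y,[\one,\one])$ restricts to an adjunction $\Hom_{\Alg(\CM)}(\one\odot Y,\one)\simeq\Hom_{\Alg(\CD)}(Y,[\one,\one])$ for every $Y\in\Alg(\CD)$. This immediately identifies $[\one,\one]_\CD$ with $[\one,\one]_{\Alg(\CD)^\rev}$ by exhibiting the same universal property. One direction is Lemma~\ref{lem:internal_hom_alg_hom}; the other is a diagram chase showing that the mate $\underline f:Y\to[\one,\one]$ of any algebra homomorphism $f:\one\odot Y\to\one$ is itself an algebra homomorphism, using only that $f$ and $\ev$ are algebra maps and that $\underline f$ preserves half-braiding.

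You instead work with the two terminal objects directly, producing $\varphi$ and $\psi$ from the respective universal properties and then checking that $\psi$ is multiplicative by first extracting associativity of the universal action $m$ from its being an algebra homomorphism. This is sound: when $A=\one_\CM$ the half-braiding with the unit is trivial, so the algebra-map condition on $m$ unwinds exactly to $m\circ(\id\odot m_Z)=m\circ(m\odot\id_Z)$, and combined with the defining identity $\ev\circ(\id\odot m_{[\one,\one]})=\ev\circ(\ev\odot\id)$ this gives multiplicativity of $\psi$ as you claim. Your route is a specialization of the paper's: the paper's diagram chase handles all $Y$ at once without passing through associativity, which is marginally cleaner and yields the restricted adjunction as a byproduct, but the essential content---that the mate of an algebra map out of $\one\odot Y$ is an algebra map---is the same.
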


\begin{proof}
We abbreviate $\one_\CM$ to $\one$ and $[\one_\CM, \one_\CM]_\CD$ to $[\one,\one]$ for simplicity.
By definition, we have an adjunction for $Y \in \CD$:  
$$
\Hom_{\CM}(\one \odot Y, \one) \simeq \Hom_{\CD}(Y, [\one, \one]). 
$$
We need to show that it restricts to an adjunction for $Y \in \Alg(\CD)$: 
\begin{align*}
\Hom_{\Alg(\CM)}(\one \odot Y, \one) &\simeq \Hom_{\Alg(\CD)}(Y, [\one, \one]).        
\end{align*}
According to Lemma \ref{lem:internal_hom_alg_hom}, the mate $\ev\circ(\id_{\one}\odot g) : \one \odot Y \to \one$ of an algebra homomorphism $g:Y \to [\one, \one]$ is an algebra homomorphism. It remains to show that the mate of an algebra homomorphism $f: \one\odot Y \to \one$ is an algebra homomorphism. That is, the unique morphism $\underline{f}:Y \to [\one, \one]$ rendering the diagram
    $$ 
        \xymatrix@R=2em @C=2em{
             & \one \odot [\one, \one]  \ar[rd]^-{\ev} & \\
             \one\odot Y \ar[ur]^-{\id_{\one}\odot \underline{f}} \ar[rr]^{f} & & \one
        }
    $$
commutative is an algebra homomorphism.

By the universal property of the internal hom $[\one, \one]$, the composition $\one_\CD \xrightarrow{u_Y} Y \xrightarrow{\underline{f}} [\one, \one]$ agrees with $u_{[\one, \one]}$. This shows that $\underline{f}$ preserves unit.
Consider the following diagram:
\begin{align*}
\small
    \xymatrix {
        (\one \odot Y) \otimes (\one \odot Y) \ar[r]^-\sim \ar[d]^{(\id_{\one} \odot \underline{f})\otimes (\id_{\one}\odot \underline{f})}& (\one \otimes \one)\odot (Y \otimes Y) \ar[r]^-{m_{\one}\odot m_Y} \ar[d]^{\id_{\one \otimes \one}\odot (\underline{f} \otimes \underline{f})} & \one \odot Y \ar[d]^{\id_{\one}\odot \underline{f}}\\
    (\one \odot [\one, \one]) \otimes (\one\odot [\one, \one]) \ar[r]^-\sim \ar[d]^{\ev\otimes\ev} & (\one \otimes \one) \odot ([\one, \one] \otimes [\one, \one]) \ar[r]^-{m_{\one}\odot m_{[\one, \one]}} & \one\odot [\one, \one] \ar[d]^{\ev}\\
    \one\otimes\one \ar[rr]^-{m_\one} & & \one .
}
\end{align*}
The left-top square is commutative because $\underline{f}$, as a morphism in $\CD$, preserves half-braiding. The bottom and the outer squares are commutative because $\ev$ and $f$ are algebra homomorphisms. Then by the universal property of the internal hom $[\one, \one]$, we read off from the right-top square an equality $\underline{f}\circ m_Y = m_{[\one,\one]}\circ(\underline{f}\otimes\underline{f})$. Namely, $\underline{f}$ is an algebra homomorphism. 
\void{
Consider the following diagram:
\begin{align*}
\small
    \xymatrix {
        (\one \odot Y) \otimes (\one \odot Y) \ar[r]^-\sim \ar[d]^{(\id_{\one} \odot \underline{f})\otimes (\id_{\one}\odot \underline{f})}& (\one \otimes \one)\odot (Y \otimes Y) \ar[r]^-{m_{\one}\odot m_Y} \ar[d]^{\id_{\one \otimes \one}\odot (\underline{f} \otimes \underline{f})} & \one \odot Y \ar[d]^{\id_{\one}\odot \underline{f}}\\
    (\one \odot [\one, \one]) \otimes (\one\odot [\one, \one]) \ar[r]^-\sim & (\one \otimes \one) \odot ([\one, \one] \otimes [\one, \one]) \ar[d]^{m_{\one}\odot m_{[\one, \one]}} & \one\odot [\one, \one] \ar[d]^{\ev}\\
    & \one\odot [\one, \one] \ar[r]^{\ev} & \one .
}
\end{align*}
The left square is commutative because $\underline{f}$, as a morphism in $\CD$, respects half-braiding. The outer two paths agree because $\ev$ and $f$ are algebra homomorphisms. Therefore, the right square is commutative. This implies that $\underline{f}$ is an algebra homomorphism by the universal property of $[\one, \one]$.
}
\end{proof}

\begin{cor} \label{cor:center-A}
Let $\CM$ be a monoidal $\CC$-$\CD$-bimodule. We have the following algebra isomorphisms for $A\in\Alg(\CM)$:
\begin{enumerate}
\item[$(1)$] $Z_\CC(A) \simeq [\one_{{_A}\CM_A}, \one_{{_A}\CM_A}]_\CC$ and $Z_\CD(A) \simeq [\one_{{_A}\CM_A}, \one_{{_A}\CM_A}]_\CD$;
\item[$(2)$] $Z_\CC(\one_\CM)\simeq[\one_\CM,\one_\CM]_\CC$ and $Z_\CD(\one_\CM)\simeq[\one_\CM,\one_\CM]_\CD$;
\item[$(3)$] $Z_\CC(A) \simeq Z_\CC(\one_{{_A}\CM_A})$ and $Z_\CD(A) \simeq Z_\CD(\one_{{_A}\CM_A})$.
\end{enumerate}
In particular, all these centers exist. 
\end{cor}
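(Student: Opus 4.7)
The plan is to derive each of the three claims by chaining Theorem \ref{thm:left-center-definition-2} (which identifies centers with algebra-level internal homs) with Proposition \ref{lem:internal_hom_to_alg_internal_hom} (which reduces such algebra-level internal homs to ordinary internal homs). Part (1) carries the essential content, while parts (2) and (3) are formal specializations.

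For (1), Theorem \ref{thm:left-center-definition-2} gives $Z_\CD(A) \simeq [\one_{{_A}\CM_A}, \one_{{_A}\CM_A}]_{\Alg(\CD)^\rev}$. Now ${_A}\CM_A$ is itself a monoidal $\CC$-$\CD$-bimodule (its right $\CD$-structure is the functor $d \mapsto A\odot d$ introduced in the paragraph preceding Lemma \ref{lem:alg_hom_B_to_A}), so Proposition \ref{lem:internal_hom_to_alg_internal_hom} applies to ${_A}\CM_A$ in place of $\CM$ and yields the canonical algebra isomorphism $[\one_{{_A}\CM_A}, \one_{{_A}\CM_A}]_{\Alg(\CD)^\rev} \simeq [\one_{{_A}\CM_A}, \one_{{_A}\CM_A}]_\CD$. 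This gives the second isomorphism in (1); the first is entirely symmetric, obtained by re-viewing $\CM$ as a monoidal right $\overline{\CC}$-module via Remark \ref{rem:monoidal-left-right-bi} and invoking the mirror version of Proposition \ref{lem:internal_hom_to_alg_internal_hom}.

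Part (2) is the specialization of (1) to $A = \one_\CM$: the canonical identification ${_{\one_\CM}}\CM_{\one_\CM} = \CM$ of monoidal $\CC$-$\CD$-bimodules sends the tensor unit $\one_{{_{\one_\CM}}\CM_{\one_\CM}}$ to $\one_\CM$, and (1) reads off (2). For (3), apply (2) with $\CM$ replaced by the monoidal $\CC$-$\CD$-bimodule ${_A}\CM_A$, whose tensor unit is $\one_{{_A}\CM_A} = A$; this gives $Z_\CC(\one_{{_A}\CM_A}) \simeq [\one_{{_A}\CM_A}, \one_{{_A}\CM_A}]_\CC$ and the parallel statement for $\CD$. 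Since the left $\CC$- and right $\CD$-module structures on ${_A}\CM_A$ used here are exactly the ones appearing on the right-hand sides of (1), composing yields $Z_\CC(A) \simeq Z_\CC(\one_{{_A}\CM_A})$ and $Z_\CD(A) \simeq Z_\CD(\one_{{_A}\CM_A})$. The ``in particular'' clause follows because each center has been identified with an internal hom, and every finite module over a finite monoidal category is enriched in that monoidal category (as recalled at the start of Section \ref{sec:left-right-full-centers}).

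The only real obstacle is a bookkeeping check that the internal homs $[\one_{{_A}\CM_A}, \one_{{_A}\CM_A}]_\CC$ and $[\one_{{_A}\CM_A}, \one_{{_A}\CM_A}]_\CD$ arising when one applies (1) to $\CM$ coincide with those arising when one applies (2) to ${_A}\CM_A$. This is immediate once one unwinds the definition of the monoidal $\CC$-$\CD$-bimodule structure on ${_A}\CM_A$, which by construction uses the functors $c\mapsto c\odot A$ and $d\mapsto A\odot d$ inherited from $\CM$; but this compatibility, rather than any analytic or categorical depth, is the only nontrivial point to verify.
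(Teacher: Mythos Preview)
Your proof is correct and follows exactly the paper's own approach: the paper's proof simply says ``(1) Combine Theorem \ref{thm:left-center-definition-2} and Proposition \ref{lem:internal_hom_to_alg_internal_hom}. (2) is a special case of (1). (3) is a consequence of (1) and (2).'' You have unpacked precisely this chain, including the point that Proposition \ref{lem:internal_hom_to_alg_internal_hom} is applied to ${_A}\CM_A$ rather than to $\CM$ itself, and your derivation of (3) by applying (2) to ${_A}\CM_A$ and composing with (1) is exactly what the paper intends.
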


\begin{proof}
(1) Combine Theorem \ref{thm:left-center-definition-2} and Proposition \ref{lem:internal_hom_to_alg_internal_hom}. (2) is a special case of (1). (3) is a consequence of (1) and (2).
\end{proof}

\begin{expl}
In the special case $\CC=\CD=\CM=\bk$, Corollary \ref{cor:center-A}(1) states that $Z(A)\simeq[\one_{{_A}\bk_A},\one_{{_A}\bk_A}]_\bk$. The right hand side is exactly $\hom_{A|A}(A,A)$, the algebra of $A$-$A$-bimodule maps of $A$.
\end{expl}

\begin{cor} \label{cor:center-sum}
Let $\CM$ be a monoidal $\CC$-$\CD$-bimodule. Then $Z_\CC(A\oplus B)\simeq Z_\CC(A)\oplus Z_\CC(B)$ and $Z_\CD(A\oplus B)\simeq Z_\CD(A)\oplus Z_\CD(B)$ for $A,B\in\Alg(\CM)$.
\end{cor}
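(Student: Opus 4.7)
The plan is to reduce the statement to a computation of internal homs via Corollary \ref{cor:center-A}, and then to exploit the block decomposition of bimodules over a direct-sum algebra. I will only describe the argument for $Z_\CD$; the statement for $Z_\CC$ follows by the same reasoning applied to $\overline{\CC}$.

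First I would observe that the canonical inclusion--projection pairs give orthogonal idempotents $e_A, e_B : A \oplus B \to A \oplus B$ in $\Alg(\CM)$ satisfying $e_A + e_B = \id$, $e_A^2 = e_A$, $e_B^2 = e_B$, and $e_A e_B = e_B e_A = 0$. Splitting along these idempotents on both sides produces an equivalence of $k$-linear categories
$$
{}_{A\oplus B}\CM_{A\oplus B} \;\simeq\; {}_A\CM_A \,\oplus\, {}_A\CM_B \,\oplus\, {}_B\CM_A \,\oplus\, {}_B\CM_B,
$$
sending $M$ to the quadruple $(e_A M e_A,\, e_A M e_B,\, e_B M e_A,\, e_B M e_B)$. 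Under this equivalence the tensor unit $A \oplus B$ of ${}_{A\oplus B}\CM_{A\oplus B}$ corresponds to $(A, 0, 0, B)$, since $e_A e_B = 0$ forces the off-diagonal blocks of the unit to vanish. The monoidal right $\CD$-action on ${}_{A\oplus B}\CM_{A\oplus B}$ is given by $d \mapsto (A\oplus B) \odot d$, so under the decomposition the object $(A\oplus B)\odot d$ corresponds to $(A\odot d,\, 0,\, 0,\, B \odot d)$.

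Next I would use the defining adjunction of the internal hom together with the direct-sum decomposition of morphism spaces to compute, for $d \in \CD$,
\begin{align*}
\Hom_\CD\bigl(d,\, [A\oplus B,\, A\oplus B]_\CD\bigr)
&\simeq \Hom_{{}_{A\oplus B}\CM_{A\oplus B}}\bigl((A\oplus B)\odot d,\ A\oplus B\bigr) \\
&\simeq \Hom_{{}_A\CM_A}(A\odot d,\, A) \,\oplus\, \Hom_{{}_B\CM_B}(B\odot d,\, B) \\
&\simeq \Hom_\CD\bigl(d,\, [A,A]_\CD \oplus [B,B]_\CD\bigr).
\end{align*}
Yoneda then yields an isomorphism $[A\oplus B, A\oplus B]_\CD \simeq [A,A]_\CD \oplus [B,B]_\CD$ in $\CD$. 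The last step is to verify that this isomorphism is one of algebras, which amounts to chasing the universal property (as in Lemma \ref{lem:internal_hom_alg_hom}) governing the multiplication of $[x,x]_\CD$ and using the fact that the multiplication on $A\oplus B$ annihilates the mixed terms $A\otimes B$ and $B\otimes A$, so that only the diagonal blocks survive and we recover the componentwise (product) multiplication on the right-hand side. Combining with Corollary \ref{cor:center-A} then gives
$$
Z_\CD(A\oplus B) \;\simeq\; [A\oplus B,\, A\oplus B]_\CD \;\simeq\; [A,A]_\CD \oplus [B,B]_\CD \;\simeq\; Z_\CD(A) \oplus Z_\CD(B).
$$

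The main obstacle I expect is the last verification: upgrading the isomorphism of underlying objects in $\CD$ to an algebra isomorphism. This is essentially a diagram chase, but one has to track carefully that the algebra structure on $[A\oplus B, A\oplus B]_\CD$, defined by the universal property from the multiplication on $A \oplus B$, genuinely corresponds to the componentwise algebra structure on $[A,A]_\CD \oplus [B,B]_\CD$ inherited from the product of the two algebras $[A,A]_\CD$ and $[B,B]_\CD$.
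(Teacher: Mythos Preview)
Your approach is correct and matches the paper's implicit argument (the corollary is stated without proof, immediately after Corollary~\ref{cor:center-A}). Your stated obstacle dissolves if you run the Yoneda argument in $\Alg(\CD)$ rather than in $\CD$, using Theorem~\ref{thm:left-center-definition-2} directly: under the block decomposition an algebra homomorphism $(A\oplus B)\odot Y \to A\oplus B$ in ${}_{A\oplus B}\CM_{A\oplus B}$ between these diagonal objects is precisely a pair of algebra homomorphisms in ${}_A\CM_A$ and ${}_B\CM_B$, and since $Z_\CD(A)\oplus Z_\CD(B)$ is the product in $\Alg(\CD)$, Yoneda there yields the algebra isomorphism at once.
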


\begin{cor}
Let $\CM,\CN$ be multi-tensor categories. Then $Z(A\boxtimes B) \simeq Z(A)\boxtimes Z(B)$ for $A\in\Alg(\CM)$, $B\in\Alg(\CN)$ under the identification $\FZ(\CM\boxtimes\CN)=\FZ(\CM)\boxtimes\FZ(\CN)$.
\end{cor}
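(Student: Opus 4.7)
The plan is to reduce the statement to a factorization property of internal Hom's under Deligne tensor product and then invoke Corollary \ref{cor:center-A}. Throughout, write $C = A\boxtimes B\in\Alg(\CM\boxtimes\CN)$, and note that under the canonical identification of monoidal categories
\[
{_C(\CM\boxtimes\CN)_C} \simeq {_A\CM_A}\boxtimes {_B\CN_B},
\]
the tensor unit factors: $\one_{{_C(\CM\boxtimes\CN)_C}} = C = \one_{{_A\CM_A}}\boxtimes\one_{{_B\CN_B}}$. By Corollary~\ref{cor:center-A}(3) and (2), the full centers in question can be rewritten as internal Hom's:
\[
Z(A\boxtimes B) \simeq [\one_{{_C(\CM\boxtimes\CN)_C}},\one_{{_C(\CM\boxtimes\CN)_C}}]_{\FZ(\CM\boxtimes\CN)},
\]
and similarly for $Z(A)$ and $Z(B)$. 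So under the identification $\FZ(\CM\boxtimes\CN)=\FZ(\CM)\boxtimes\FZ(\CN)$, what remains is to show that internal Hom's are compatible with Deligne tensor product.

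More precisely, I would prove the following lemma: if $\CC,\CD$ are finite monoidal categories, $\CM$ a finite left $\CC$-module and $\CM'$ a finite left $\CD$-module, then for all $x,y\in\CM$ and $x',y'\in\CM'$ there is a canonical isomorphism
\[
[x\boxtimes x', y\boxtimes y']_{\CC\boxtimes\CD} \simeq [x,y]_\CC \boxtimes [x',y']_\CD .
\]
This follows from the universal property: for $c\in\CC$, $d\in\CD$, the functorial chain of isomorphisms
\begin{align*}
\Hom_{\CM\boxtimes\CM'}\!\bigl((c\boxtimes d)\odot(x\boxtimes x'),\, y\boxtimes y'\bigr)
&\simeq \Hom_\CM(c\odot x, y)\otimes_k \Hom_{\CM'}(d\odot x', y')\\
&\simeq \Hom_\CC(c,[x,y]_\CC)\otimes_k \Hom_\CD(d,[x',y']_\CD)\\
&\simeq \Hom_{\CC\boxtimes\CD}\!\bigl(c\boxtimes d,\, [x,y]_\CC\boxtimes[x',y']_\CD\bigr)
\end{align*}
extends (by right exactness and cocontinuity of $\boxtimes$) to all objects of $\CC\boxtimes\CD$; the Yoneda lemma then produces the claimed isomorphism, and inspection of the evaluation map shows that the identification is induced by the obvious map $(x\boxtimes x')\odot([x,y]_\CC\boxtimes[x',y']_\CD)\to y\boxtimes y'$ obtained by tensoring the two evaluations.

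Applying this lemma with $\CC=\FZ(\CM)$, $\CD=\FZ(\CN)$, $x=y=\one_{{_A\CM_A}}$ and $x'=y'=\one_{{_B\CN_B}}$, and then invoking Corollary~\ref{cor:center-A} on both sides, we obtain an isomorphism in $\CC\boxtimes\CD$:
\[
Z(A\boxtimes B)\simeq Z(A)\boxtimes Z(B).
\]
Finally, I would verify that this isomorphism is compatible with the algebra structures. For this it suffices to note that the evaluation map for the tensor-product internal Hom is, by construction, the Deligne tensor product of the two individual evaluation maps; hence by Lemma~\ref{lem:internal_hom_alg_hom} (applied levelwise and then pushed through the universal property of the product), the multiplication and unit match up.

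The main potential obstacle is keeping the algebra structures and the bimodule/central identifications straight through the several identifications, especially checking that the isomorphism ${_C(\CM\boxtimes\CN)_C}\simeq{_A\CM_A}\boxtimes{_B\CN_B}$ is compatible with the braided monoidal functors from $\FZ(\CM)\boxtimes\FZ(\CN)$ on both sides. Once the internal-Hom factorization lemma is established as a morphism in $\CAlg(\FZ(\CM)\boxtimes\FZ(\CN))$, the conclusion is formal.
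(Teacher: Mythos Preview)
Your proposal is correct and follows essentially the same approach the paper implicitly intends: the paper states this corollary without proof, as it is meant to follow immediately from Corollary~\ref{cor:center-A}(1) together with the standard fact that internal Hom's factor through Deligne tensor product. You have simply spelled out that factorization lemma and the algebra-structure compatibility in more detail than the paper does.
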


The following lemma gives special cases of \cite[Proposition\,3.5]{dkr}. 
\begin{lem} \label{thm:center-end}
Let $A$ be an algebra in a multi-tensor category $\CC$.

$(1)$ The end $\int_{x\in\CC_A}[x,x\otimes_A w]_\CC$ is equipped with a canonical half-braiding hence defines an object of $\FZ(\CC)$ for $w\in{}_A\CC_A$.

$(2)$ The functor $w \mapsto \int_{x\in\CC_A}[x,x\otimes_A w]_\CC$ is right adjoint to the functor $\FZ(\CC)\to {}_A\CC_A$, $b\mapsto b\otimes A$.

$(3)$ We have $[\one_{{_A}\CC_A},w]_{\FZ(\CC)} \simeq \int_{x\in\CC_A}[x,x\otimes_A w]_\CC$ for $w\in{}_A\CC_A$.

$(4)$ We have $Z(A) \simeq \int_{x\in\CC_A}[x,x]_\CC$.
\end{lem}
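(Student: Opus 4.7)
The plan is to observe that items (3) and (4) follow from (2), so the main content lies in (1) and (2). For (3), by the defining adjunction of the internal hom $[\one_{{}_A\CC_A}, -]_{\FZ(\CC)}$, the object $[\one_{{}_A\CC_A}, w]_{\FZ(\CC)}$ represents the functor $b\mapsto\Hom_{{}_A\CC_A}(b\odot\one_{{}_A\CC_A}, w)=\Hom_{{}_A\CC_A}(b\otimes A, w)$ on $\FZ(\CC)$, which by (2) is represented by $R(w):=\int_x[x, x\otimes_A w]_\CC$. Part (4) combines (3) with Corollary \ref{cor:center-A}(1) (giving $Z(A)=[\one_{{}_A\CC_A}, \one_{{}_A\CC_A}]_{\FZ(\CC)}$) and the identification $x\otimes_A A\simeq x$.

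For (2), the key computation is, for $b\in\FZ(\CC)$,
\[ \Hom_\CC(b, R(w))=\int_x\Hom_{\CC_A}(b\otimes x,\, x\otimes_A w)=\mathrm{Nat}(F_b, H_w), \]
where $F_b, H_w:\CC_A\to\CC_A$ are the functors $F_b(x)=b\otimes x$ and $H_w(x)=x\otimes_A w$. The functor $H_w$ is canonically a left $\CC$-module functor, and the half-braiding of $b$ promotes $F_b$ to a left $\CC$-module functor as well. One verifies that restricting to $\Hom_{\FZ(\CC)}(b, R(w))\subset\Hom_\CC(b, R(w))$ (using the half-braiding on $R(w)$ to be constructed in (1)) corresponds to restricting to left $\CC$-module natural transformations $\mathrm{Nat}_\CC(F_b, H_w)\subset\mathrm{Nat}(F_b, H_w)$. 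Theorem \ref{thm:tensor-product-exits} identifies $\Fun_\CC(\CC_A, \CC_A)\simeq{}_A\CC_A$ via $F\mapsto F(A)$, sending $F_b$ to $b\otimes A$ with the bimodule structure induced by the half-braiding of $b$ (which matches the bimodule structure of the functor in the lemma statement) and $H_w$ to $w$. Thus $\mathrm{Nat}_\CC(F_b, H_w)\simeq\Hom_{{}_A\CC_A}(b\otimes A, w)$, yielding the adjunction in (2).

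For (1), the half-braiding on $R(w)$ is constructed using the natural isomorphisms
\[ c\otimes[x,y]_\CC\simeq[x,\, c\otimes y]_\CC\quad\text{and}\quad[x,y]_\CC\otimes c\simeq[c^R\otimes x,\, y]_\CC \]
(both read off from the defining adjunction of $[-,-]_\CC$ together with the biadjointness of tensoring with $c$, guaranteed by rigidity of $\CC$), plus the fact that $c\otimes-$ and $-\otimes c$ preserve ends. These yield $c\otimes R(w)\simeq\int_x[x,\, c\otimes x\otimes_A w]_\CC$ and $R(w)\otimes c\simeq\int_x[c^R\otimes x,\, x\otimes_A w]_\CC$. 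A canonical map $R(w)\otimes c\to c\otimes R(w)$ is then built via the universal property of the target end: for each $x'\in\CC_A$, the component $R(w)\otimes c\to[x',\, c\otimes x'\otimes_A w]_\CC$ is the projection of $R(w)\otimes c$ at the factor $x=c\otimes x'$ of the source end, post-composed with the map $[c^R\otimes c\otimes x',\, c\otimes x'\otimes_A w]_\CC\to[x',\, c\otimes x'\otimes_A w]_\CC$ induced by the unit $\one\to c^R\otimes c$ of the right dual; the inverse is built symmetrically from the counit $c\otimes c^R\to\one$. The main obstacle is the verification that this construction yields an invertible half-braiding satisfying the hexagon axioms and inducing the correct identification in (2); once this is done, the remaining content is formal, and the whole argument is essentially a specialization of \cite[Proposition 3.5]{dkr}.
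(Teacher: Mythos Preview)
Your argument is correct, and for parts (1), (3), and (4) it matches the paper's proof essentially verbatim: the half-braiding in (1) is constructed via the same chain of isomorphisms (pulling $c$ inside the internal hom, then re-indexing the end along the adjunction $c\otimes- \dashv c^R\otimes-$), and (3), (4) are deduced formally from (2) and Corollary~\ref{cor:center-A}(1).

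The only genuine difference is in (2). The paper simply exhibits the unit $b \to \int_x[x,b\otimes x]_\CC$ (from the canonical family $\one_\CC\to[x,x]_\CC$) and the counit $\int_x[x,x\otimes_A w]_\CC\otimes A \to [A,w]_\CC\otimes A \xrightarrow{\ev} w$ and leaves the triangle identities implicit. You instead compute $\Hom_\CC(b,R(w))\simeq\mathrm{Nat}(F_b,H_w)$ as an end, identify the $\FZ(\CC)$-subspace with $\CC$-module natural transformations, and invoke the equivalence $\Fun_\CC(\CC_A,\CC_A)\simeq{}_A\CC_A$ of Theorem~\ref{thm:tensor-product-exits}. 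Your route is more conceptual---it explains the adjunction as a restatement of that equivalence and makes transparent why the half-braiding on $R(w)$ is exactly what is needed to cut down to $\FZ(\CC)$-maps---whereas the paper's route has the advantage of producing the unit and counit explicitly, which is occasionally useful downstream (e.g.\ in the proof of Proposition~\ref{prop:coequ}). Both arguments leave the same coherence verifications (hexagon for the half-braiding, compatibility with the adjunction) to the reader, so neither is more complete than the other.
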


\begin{proof}
(1) We have $\int_{x\in\CC_A}[x,x\otimes_A w]_\CC\otimes a
\simeq \int_{x\in\CC_A}[a^R\otimes x,x\otimes_A w]_\CC
\simeq \int_{x\in\CC_A}[x,a\otimes x\otimes_A w]_\CC
\simeq a\otimes\int_{x\in\CC_A}[x,x\otimes_A w]_\CC
$ for $a\in\CC$, where the second isomorphism is due to the fact that the functor $a^R\otimes-:\CC_A\to\CC_A$ is right adjoint to $a\otimes-$.

(2) The unit map $u_b: b\simeq b\otimes\one_{\FZ(\CC)} \xrightarrow{\id_b\otimes u} b\otimes \int_{x\in\CC_A}[x,x]_\CC \simeq \int_{x\in\CC_A}[x,b\otimes x]_\CC$, where $u$ is induced by the canonical family $\one_\CC\to[x,x]_\CC$, and the counit map $v_w: \int_{x\in\CC_A}[x,x\otimes_A w]_\CC\otimes A \to [A,A\otimes_A w]_\CC\otimes A \xrightarrow{\ev} w$ exhibit the adjunction.

(3) is a consequence of (2).

(4) is a consequence of (3) and Corollary \ref{cor:center-A}(1).
\end{proof}

\begin{prop} \label{prop:coequ}
Let $\CC$ be a tensor category. Consider the following morphisms for $x\in\CC$
\begin{align*}
& \lambda_x: Z(\one_\CC)\otimes x \to \one_\CC\otimes x \simeq x, \\
& \rho_x: Z(\one_\CC)\otimes x \simeq x\otimes Z(\one_\CC) \to x\otimes \one_\CC \simeq x.
\end{align*}
$(1)$ We have $\lambda_x=\rho_x$ if and only if $x$ is a direct sum of $\one_\CC$.
$(2)$ The coequalizer of $\lambda_x$ and $\rho_x$ is $\Hom_\CC(x,\one_\CC)^\vee\otimes\one_\CC$.
\end{prop}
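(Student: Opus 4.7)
My plan is to prove part (1) directly and deduce part (2) as an immediate consequence. The ``$\Leftarrow$'' direction of (1) is clear: if $x = V \otimes \one_\CC$ for some $V \in \bk$, both $\lambda_x$ and $\rho_x$ reduce to $\epsilon \otimes \id_x$ (composed with the unit isomorphism), since the half-braiding of $Z(\one_\CC)$ with $\one_\CC$ is trivial, forcing $\lambda_{\one_\CC} = \rho_{\one_\CC} = \epsilon$.

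For the ``$\Rightarrow$'' direction, suppose $\lambda_x = \rho_x$. Via the rigidity adjunction $\Hom_\CC(Z(\one_\CC) \otimes x, x) \simeq \Hom_\CC(Z(\one_\CC), x \otimes x^R)$, the hypothesis is equivalent to $\pi_x = \coev_x \circ \epsilon$ as morphisms $Z(\one_\CC) \to [x,x]_\CC = x \otimes x^R$, where $\pi_x$ denotes the structure morphism of the end $Z(\one_\CC) = \int_z [z,z]_\CC$ from Lemma \ref{thm:center-end}. I would verify that the full subcategory $\CC_0 := \{w \in \CC : \lambda_w = \rho_w\}$ is a Serre tensor subcategory---closed under subobjects, quotients, direct sums, and tensor products---each closure property following from naturality of $\lambda, \rho$ together with the right-exactness of $\otimes$ and the hexagon axiom for the half-braiding. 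It then suffices to show that every simple $s \in \CC_0$ satisfies $s \simeq \one_\CC$: since $\End_\CC(s) = k$ by Schur's lemma, we have $\Hom_\CC(\one_\CC, [s,s]_\CC) = k \cdot \coev_s$, so the identity $\pi_s = \coev_s \circ \epsilon$ constrains $\pi_s$ to land in this one-dimensional span; combined with the dinaturality of $\pi$ across all morphisms to and from $s$, this forces $s \simeq \one_\CC$. For non-simple $w \in \CC_0$, one analogously checks that extensions split, using that $\lambda = \rho$ trivializes the relevant extension cocycles.

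Part (2) then follows. Let $V := \Hom_\CC(x, \one_\CC)^\vee$ and let $\eta_x: x \to V \otimes \one_\CC$ be the unit of the reflective adjunction $L \dashv \iota$ between $\bk$ and $\CC$, where $\iota(W) = W \otimes \one_\CC$ and $L(w) = \Hom_\CC(w, \one_\CC)^\vee$. By naturality of $\delta := \lambda - \rho$, we have $\eta_x \circ \delta_x = \delta_{V \otimes \one_\CC} \circ (\id_{Z(\one_\CC)} \otimes \eta_x) = 0$ since $V \otimes \one_\CC \in \CC_0$; thus $\eta_x$ factors through the coequalizer $q: x \to \mathrm{Coeq}(\lambda_x, \rho_x)$ as $\bar\eta_x: \mathrm{Coeq}(\lambda_x, \rho_x) \to V \otimes \one_\CC$. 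The coequalizer itself lies in $\CC_0$ (by naturality, since $q$ is epic and $\otimes$ is right exact) and is therefore a direct sum of $\one_\CC$ by (1); the universal property of $\eta_x$ as the unit of a reflective adjunction then forces $\bar\eta_x$ to be an isomorphism. I expect the main obstacle to be the simple-object step in (1): extracting the identification $s \simeq \one_\CC$ from the scalar constraint requires carefully exploiting both the dinaturality of $\pi$ and the algebra structure of the end $Z(\one_\CC)$, and it is not immediately clear to me how to conclude this without passing through the monadic description of $\CC$ as $Z(\one_\CC)$-modules in $\FZ(\CC)$.
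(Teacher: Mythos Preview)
Your treatment of the easy direction of (1) and of part (2) is fine and matches the paper's argument: once (1) is known, the coequalizer is the maximal quotient of $x$ that is a direct sum of $\one_\CC$, which is $\Hom_\CC(x,\one_\CC)^\vee\otimes\one_\CC$. The problem is the hard direction of (1), and you correctly identify the gap yourself. The observation that $\Hom_\CC(\one_\CC,[s,s]_\CC)=k\cdot\coev_s$ is of no use: the structure map $\pi_s$ has domain $Z(\one_\CC)$, not $\one_\CC$, so the identity $\pi_s=\coev_s\circ\epsilon$ merely restates the hypothesis $\lambda_s=\rho_s$ and puts no constraint on $s$. Dinaturality in $\CC$ alone does not close this gap, because inside $\CC$ the target $s\otimes s^L$ mixes the two factors and one cannot read off information about $s$ separately. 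Your claim that extensions split is likewise unsubstantiated: nothing in the outline explains why $\lambda_w=\rho_w$ would force a non-split self-extension of $\one_\CC$ to split.

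The paper's key idea is precisely to separate the two factors by lifting to $\CC\boxtimes\CC$. Since $\CC$ is a tensor category, the functor $\otimes:\CC\boxtimes\CC\to\CC$ is exact and kills no simple object (as $a\otimes b\not\simeq0$ for simple $a,b$), hence is faithful. The end $Z(\one_\CC)=\int_a a\otimes a^L$ and its structure maps $h_b$ are the images under $\otimes$ of the end $\tilde E:=\int_a a\boxtimes a^L$ and its structure maps $\tilde h_b$ in $\CC\boxtimes\CC$. The hypothesis $\lambda_x=\rho_x$ says $h_x$ factors through $h_{\one_\CC}$, hence $h_x$ vanishes on $\ker h_{\one_\CC}=\otimes(\ker\tilde h_{\one_\CC})$; by faithfulness, $\tilde h_x$ vanishes on $\ker\tilde h_{\one_\CC}$ and therefore factors through the simple object $\one_\CC\boxtimes\one_\CC$. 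In $\CC\boxtimes\CC$ one now has $\Hom(\one_\CC\boxtimes\one_\CC,\,x\boxtimes x^L)\simeq\Hom_\CC(\one_\CC,x)\otimes_k\Hom_\CC(\one_\CC,x^L)$, and the factorization of $\tilde h_x$ combined with dinaturality exhibits $\id_x$ as a sum $\sum_i f_i\circ g_i$ with $f_i:\one_\CC\to x$ and $g_i:x\to\one_\CC$, forcing $x$ to be a direct summand of $\one_\CC^{\,n}$ and hence a direct sum of copies of $\one_\CC$. This handles both the simple-object step and the extension-splitting step simultaneously.
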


\begin{proof}
(1) We may identify $Z(\one_\CC)$ with $\int_{a\in\CC} a\otimes a^L$ by Lemma \ref{thm:center-end}(4). Let $h_b:Z(\one_\CC) \to b\otimes b^L$ be the canonical morphism.
Unwinding the proof of Lemma \ref{thm:center-end}, we see that the canonical algebra homomorphism $Z(\one_\CC)\to\one_\CC$ is given by $h_{\one_\CC}$. Moreover, $\rho_x$ is given by the composition $Z(\one_\CC) \otimes x \xrightarrow{h_x\otimes\id_x} x\otimes x^L\otimes x \xrightarrow{\id_x\otimes v_x} x\otimes\one_\CC \simeq x$. 
In summary, the morphism $Z(\one_\CC)\to x\otimes x^L$ induced by $\lambda_x$ is the composition of $h_{\one_\CC}$ with $u_x:\one_\CC\to x\otimes x^L$ while that induced by $\rho_x$ coincides with $h_x$.
If $\lambda_x=\rho_x$ then $h_x$ factors through $h_{\one_\CC}$. Since $\CC$ is a tensor category, the tensor product of $\CC$ is exact in each variable and we have $a\otimes b\not\simeq0$ for simple objects $a,b\in\CC$. Therefore, $\lambda_x=\rho_x$ implies that the canonical morphism $\int_{a\in\CC} a\boxtimes a^L \to x\boxtimes x^L$ in $\CC\boxtimes\CC$ factors through $\int_{a\in\CC} a\boxtimes a^L \to \one_\CC\boxtimes \one_\CC^L$. This is possible only if $x$ is a direct sum of $\one_\CC$ because $\one_\CC$ is a simple object of $\CC$.

(2) According to (1), the coequalizer of $\lambda_x$ and $\rho_x$ is the maximal quotient of $x$ that is a direct sum of $\one_\CC$, which is exactly $\Hom_\CC(x,\one_\CC)^\vee\otimes\one_\CC$.
\end{proof}

\section{Pointed Drinfeld center functor} \label{sec:Drinfeld-full-center-functor}

In this section, we show that the functoriality of Drinfeld center \cite{kz1} and that of full center \cite{dkr} can be combined into a new center functor involving both Drinfeld center and full center (see Theorem \ref{thm:Df-center}). Moreover, this new center functor restricts to a symmetric monoidal equivalence (see Theorem \ref{thm:ff}). These results generalize many earlier results in the literature.

\subsection{Exact algebras}

Exact algebras are a class of algebras in multi-tensor categories introduced by Etingof and Ostrik \cite{eo} in analogy with semisimple algebras in multi-fusion categories.

\begin{defn}[\cite{eo}]
Let $\CC$ be a multi-tensor category. A finite left $\CC$-module $\CM$ is {\em exact} if for any projective $P\in\CC$ and any object $x\in\CM$ the object $P\odot x$ is projective.
An algebra $A$ in $\CC$ is {\em exact} if the left $\CC$-module $\CC_A$ is exact.
\end{defn}

\begin{expl}
An algebra $A$ in a multi-fusion category $\CC$ is exact if and only if $A$ is semisimple in the sense that $\CC_A$ is semisimple. Indeed, $A$ is exact $\Leftrightarrow$ $P\otimes x$ is projective for any $P\in\CC$ and $x\in\CC_A$ $\Leftrightarrow$ any $x\in\CC_A$ is projective $\Leftrightarrow$ $\CC_A$ is semisimple. 
Moreover, if $\chara k=0$ then $A$ is semisimple if and only if $A$ is separable (see for example \cite[
Theorem 6.10]{kz4}).
\end{expl}

\void{
\begin{rem}
According to \cite[Proposition 3.16 and Lemma 3.21]{eo}, a finite left $\CC$-module $\CM$ is exact if and only if $\Fun_\CC(\CM,\CM)$ is rigid. Therefore, an algebra $A$ in $\CC$ is exact if and only if ${}_A\CC_A$ is a multi-tensor category.
\end{rem}
}

The following proposition can be derived easily from the results of  \cite{eo}. For the reader's convenience we sketch a proof.

\begin{prop}
Let $\CC$ be a multi-tensor category. The following conditions are equivalent for an algebra $A$ in $\CC$:
\begin{enumerate}
\item[$(1)$] The algebra $A$ is exact.
\item[$(2)$] Every left $\CC$-module functor $F:\CC_A\to\CX$ is exact for every finite left $\CC$-module $\CX$.
\item[$(3)$] The finite monoidal category ${}_A\CC_A$ is a multi-tensor category.
\item[$(4)$] Every right exact left $\CC$-module functor $F:\CC_A\to\CC_A$ is exact.
\item[$(5)$] The functor $-\otimes_A y:\CC_A\to\CC$ is exact for every $y\in{}_A\CC$.
\item[$(6)$] The functor $x\otimes_A-:{}_A\CC\to\CC$ is exact for every $x\in\CC_A$.
\item[$(7)$] For any injective $I\in\CC$ and any object $x\in\CC_A$, the object $I\otimes x$ is injective.
\item[$(8)$] The functor $-\otimes_A-: \CX_A\times{}_A\CY\to\CX\boxtimes_\CC\CY$ is exact in each variable for every finite right $\CC$-module $\CX$ and finite left $\CC$-module $\CY$.
\end{enumerate}
\end{prop}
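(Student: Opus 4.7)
The plan is to combine the Etingof--Ostrik characterizations of exact modules \cite{eo} with Theorem \ref{thm:tensor-product-exits}, which identifies right exact module functors with relative tensor products. I would treat (1), (2), (3), (7) as a backbone of classical EO equivalences and then reduce each of (4), (5), (6), (8) to them. Here $(1)\Leftrightarrow(2)$ is the defining property of exact modules from \cite{eo} (a module is exact iff every additive left $\CC$-module functor out of it into a finite $\CC$-module is exact); $(1)\Leftrightarrow(3)$ combines EO's rigidity criterion for exactness with the identification $\Fun_\CC(\CC_A,\CC_A)\simeq{}_A\CC_A$ as monoidal categories given by Theorem \ref{thm:tensor-product-exits}; and $(1)\Leftrightarrow(7)$ follows from the classical fact, also in \cite{eo}, that projective and injective objects coincide in any multi-tensor category, which permits the interchange of the two formulations.

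For the remaining conditions, the main tool is that every right exact left $\CC$-module functor $\CC_A\to\CC_B$ has the form $-\otimes_A y$ for a unique $y\in{}_A\CC_B$ (Theorem \ref{thm:tensor-product-exits}). Thus $(2)\Rightarrow(5)$ and $(2)\Rightarrow(4)$ are immediate by specialization. Conversely, given (5), write an arbitrary finite left $\CC$-module $\CX$ as $\CC_B$ using Proposition \ref{prop:kz_module_alg}; any right exact module functor $\CC_A\to\CC_B$ then has the form $-\otimes_A y$ with $y\in{}_A\CC_B$, and its exactness can be tested through the exact conservative forgetful functor $\CC_B\to\CC$, reducing to (5). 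This yields $(5)\Rightarrow(2)$; the equivalence of (6) with the symmetric statement is handled the same way (using $\CC_A\leftrightarrow{}_A\CC$), and (8) follows from (5) together with (6) by realizing both $\CX$ and $\CY$ as categories of modules. Finally, for $(4)\Rightarrow(3)$: if every right exact module endofunctor $-\otimes_A B$ of $\CC_A$ is exact, then it admits both left and right adjoints; these adjoints are again $\CC$-module functors by Remark \ref{rem:adjoint-module-functor}, and via Theorem \ref{thm:tensor-product-exits} they correspond to left and right duals of $B$ in ${}_A\CC_A$, so that ${}_A\CC_A$ is rigid.

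The main obstacle is the EO implication $(3)\Rightarrow(1)$ (rigidity of ${}_A\CC_A$ implies exactness of $A$), which is the deeper half of the EO rigidity theorem; I would cite \cite{eo} for it rather than reprove it. A secondary technical point arises in $(4)\Rightarrow(3)$: one must verify that the right adjoint of an exact module endofunctor is again right exact, so as to correspond via Theorem \ref{thm:tensor-product-exits} to a genuine object of ${}_A\CC_A$ providing the right dual; in the present $k$-linear finite setting this can be established using the rigidity of $\CC$ and the good behavior of internal homs over a rigid base, and the needed facts are again available from \cite{eo}.
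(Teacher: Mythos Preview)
Your proposal is correct in outline, but it takes a different route from the paper's and leans more heavily on black-box citations to \cite{eo}. The paper proves the main cycle $(1)\Rightarrow(2)\Rightarrow(3)\Rightarrow(4)\Rightarrow(5)\Rightarrow(1)$ by short direct arguments and does not need to invoke the deep EO implication ``rigidity of ${}_A\CC_A$ $\Rightarrow$ exactness'' that you flag as the main obstacle. Concretely: the paper's $(3)\Rightarrow(4)$ is immediate (rigidity gives adjoints, hence exactness), $(4)\Rightarrow(5)$ is the trick of noting that $-\otimes_A(y\otimes z):\CC_A\to\CC_A$ is exact for all $z\in\CC_A$, so $\Ker(f\otimes_A y)\otimes z=0$ for all $z$, forcing $\Ker(f\otimes_A y)=0$; and $(5)\Rightarrow(1)$ uses the internal-hom formula $[x,-]_\CC\simeq(-\otimes_A x^R)^L$, so that $\Hom_{\CC_A}(P\otimes x,-)\simeq\Hom_\CC(P,[x,-]_\CC)$ is exact. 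This closes the loop without the EO rigidity theorem. For (6) and (7) the paper argues by passing to $\CC^\rev$ and using the equivalence $(\CC^\rev)_A\simeq(\CC_A)^\op$, $x\mapsto x^L$, rather than appealing directly to ``projective = injective in $\CC$''; (8) is handled just as you suggest.

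What your approach buys is brevity if one is willing to cite \cite{eo} freely; what the paper's approach buys is a self-contained and more elementary proof that in particular replaces your ``main obstacle'' $(3)\Rightarrow(1)$ by two short computations. One small caution on your $(5)\Rightarrow(2)$: the reduction via Theorem \ref{thm:tensor-product-exits} only covers \emph{right exact} module functors, whereas statement (2) as written covers all $k$-linear module functors; the paper's direct $(1)\Rightarrow(2)$ argument (any short exact sequence becomes split-exact after tensoring with a projective generator, hence $F$ preserves it) handles the general case without this assumption.
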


\begin{proof}
$(1)\Rightarrow(2)$ Let $C = (0\to x\xrightarrow{f}y\xrightarrow{g}z\to 0)$ be an exact sequence in $\CC_A$. Then for any projective $P\in\CC$, the exact sequence $P\otimes C$ consists of projective objects hence splits. Thus the sequence $P\odot F(C) \simeq F(P\otimes C)$ is exact. Therefore, $F(C)$ itself is exact.

$(2)\Rightarrow(3)$ Every left $\CC$-module functor $F:\CC_A\to\CC_A$ is exact hence has both a left adjoint and a right adjoint. Therefore, ${}_A\CC_A\simeq\Fun_\CC(\CC_A,\CC_A)^\rev$ is rigid.

$(3)\Rightarrow(4)$ Since ${}_A\CC_A\simeq\Fun_\CC(\CC_A,\CC_A)^\rev$ is rigid, $F$ has both a left adjoint and a right adjoint hence is exact.

$(4)\Rightarrow(5)$ Let $f: x\to x'$ be a monomorphism. Since the functor $-\otimes_A y\otimes z:\CC_A\to\CC_A$ is exact by assumption, the object $\Ker(f\otimes_A y)\otimes z \simeq\Ker(f\otimes_A y\otimes z)$ vanishes for all $z\in\CC_A$. Thus $\Ker(f\otimes_A y)$ itself vanishes, as desired.

$(5)\Rightarrow(1)$ For any projective $P\in\CC$ and any object $x\in\CC_A$, since the functor $[x,-]_\CC\simeq(-\otimes_A x^R)^L$ is exact by assumption, the functor $\Hom_{\CC_A}(P\otimes x,-) \simeq \Hom_\CC(P,[x,-]_\CC)$ is also exact. Namely, $P\otimes x$ is projective.

$(3)\Leftrightarrow(6)$ We have identifications ${}_A(\CC^\rev)_A = ({}_A\CC_A)^\rev$ and $(\CC^\rev)_A = {}_A\CC$. Applying $(3)\Leftrightarrow(5)$ to $\CC^\rev$ we obtain $(3)\Leftrightarrow(6)$.

$(3)\Leftrightarrow(7)$  We have an equivalence $(\CC^\rev)_A\simeq(\CC_A)^\op$, $x\mapsto x^L$. Applying $(3)\Leftrightarrow(1)$ to $\CC^\rev$ we obtain $(3)\Leftrightarrow(7)$.

$(8)\Rightarrow(5)(6)$ is trivial. 

$(5)(6)\Rightarrow(8)$ Suppose that $\CX={}_X\CC$ and $\CY=\CC_Y$ where $X,Y\in\Alg(\CC)$. Since the forgetful functors $\CX,\CY\to\CC$ respect exact sequence, we may assume without loss of generality that $\CX=\CY=\CC$. Hence (8) is reduced to (5)(6).
\end{proof}

\begin{defn}
An exact algebra $A$ in a multi-tensor category $\CC$ is {\em simple} if $A$ is a simple $A$-$A$-bimodule or, equivalently, ${}_A\CC_A$ is a tensor category.
\end{defn}

\void{
\begin{prop}
If $A$ is exact, then the functor $\CX_A\times{}_A\CY\to\CX\boxtimes_\CC\CY$, $(x,y)\mapsto x\otimes_A y$ is exact in each variable.
\end{prop}

\begin{proof}
Suppose that $\CX={}_X\CC$ and $\CY=\CC_Y$. Since the forgetful functors $\CX,\CY\to\CC$ respect exact sequence, we may assume without loss of generality that $\CX=\CY=\CC$. Then the functor $-\otimes_A y:\CC_A\to\CC$ is exact by \cite[Proposition 3.11]{eo}. Replacing $\CC$ by $\CC^\rev$ we see that $x\otimes_A-$ is also exact.
\end{proof}
}

In the dual picture, suppose that $A$ is a coalgebra in a multi-tensor category $\CC$ (i.e. an algebra in $\CC^\op$). We use $x\otimes^A y$ to denote the equalizer of the parallel morphisms $x\boxtimes_\CC y \rightrightarrows x\boxtimes_\CC (A\odot y)$ for a right $A$-comodule $x$ in a finite right $\CC$-module $\CX$ (i.e. a right $A$-module in the right $\CC^\op$-module $\CX^\op$) and a left $A$-comodule $y$ in a finite left $\CC$-module $\CY$. If $A$ is exact (as an algebra in $\CC^\op$), then the functor $(x,y)\mapsto x\otimes^A y$ is exact in each variable.

\subsection{Formula for horizontal fusion of internal homs} \label{sec:formula}


Let $\CM$ be a multi-tensor category, $\CX$ be a finite right $\CM$-module, and $\CU=\Fun_{\CM^\rev}(\CX,\CX)$. Then $\CU$ is a monoidal right $\FZ(\CM)$-module by Example \ref{exam:Fun_monoidal_bimodule}. Suppose that $M\in\Alg(\CM)$, $x\in\CX_M$. Then $\CX_M$ is a finite $U$-${}_M\CM_M$-bimodule and $x$ is an $[x,x]_\CU$-$\one_{{}_M\CM_M}$-bimodule in the obvious way. Since $x$ is a right $\one_{{}_M\CM_M}$-module, it is also a $Z(M)$-module via the algebra isomorphism $Z(M)\simeq Z_{\FZ(\CM)}(\one_{{}_M\CM_M})$. It follows that $x$ is a left $[x,x]_\CU \odot Z(M)$-module, which supplies an algebra homomorphism $m: [x,x]_\CU \odot Z(M) \to [x,x]_\CU$. Then, $m$ defines a unital $Z(M)$-action on $[x,x]_\CU$. In other words, $m$ equips $[x,x]_\CU$ with the structure of a right $Z(M)$-module in $\Alg(\CU)$.

More generally, let $\CL$, $\CM$ be multi-tensor categories, $\CX$ be a finite $\CL$-$\CM$-bimodule, and $\CU=\Fun_{\CL|\CM}(\CX,\CX)$. Then $x$ is a $Z(L)\boxtimes Z(M)$-module and $[x,x]_\CU$ is a $Z(L)$-$Z(M)$-bimodule in $\Alg(\CU)$ for $L\in\Alg(\CL)$, $M\in\Alg(\CM)$ and $x\in{}_L\CX_M$.

For a braided multi-tensor category $\CB$, a monoidal right $\CB$-module $\CU$, a monoidal left $\CB$-module $\CV$ and for $B\in\CAlg(\CB)$, $U\in\Alg(\CU)_B$, $V\in{}_B\Alg(\CV)$, the relative tensor product $U\otimes_B V$, which is defined by the coequalizer of the following two parallel morphisms:
\be \label{eq:coequalizer-B}
\raisebox{1.7em}{\xymatrix@R=0em{
& (U\odot B) \boxtimes_\CB V \ar[rd] & \\
U\boxtimes_\CB B \boxtimes_\CB V \ar[ur]^\simeq \ar[rd]_\simeq & & U \boxtimes_\CB V\, , \\
& U \boxtimes_\CB (B\odot V) \ar[ur] &
}}
\ee
has a unique structure of an algebra in $\CU\boxtimes_\CB\CV$ such that the projection $U\boxtimes_\CB V \twoheadrightarrow U\otimes_B V$ is an algebra homomorphism. The proof of this fact is entirely the same as that of \cite[Lemma 4.5]{dkr}. We omit the details.

\medskip

The main purpose of this subsection is to prove the following fusion formula:

\begin{thm} \label{thm:main_formula}
Let $\CL,\CM,\CN$ be indecomposable multi-tensor categories and $L,M,N$ be simple exact algebras in $\CL,\CM,\CN$, respectively. Let ${}_\CL\CX_\CM,{}_\CM\CY_\CN$ be finite bimodules and let $\CU=\Fun_{\CL|\CM}(\CX,\CX)$, $\CB=\FZ(\CM)$, $\CV=\Fun_{\CM|\CN}(\CY,\CY)$. 

$(1)$ There is a natural isomorphism for $x,x'\in{}_L\CX_M$ and $y,y'\in{}_M\CY_N$:
\begin{equation} \label{eqn:main}
[x,x']_\CU\otimes_{Z(M)}[y,y']_\CV \simeq [x\otimes_M y,x'\otimes_M y']_{\CU\boxtimes_\CB\CV}.
\end{equation}

$(2)$ If $x=x'$ and $y=y'$, then \eqref{eqn:main} is an algebra isomorphism.
\end{thm}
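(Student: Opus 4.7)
The plan is to construct a natural comparison morphism $\alpha$ from the left-hand side to the right-hand side of \eqref{eqn:main} by invoking the universal property of the internal hom on the right, and then to prove $\alpha$ is an isomorphism by a Yoneda-style test against the generating objects of $\CU\boxtimes_\CB\CV$. Part (2) will then follow from part (1) together with Lemma \ref{lem:internal_hom_alg_hom}, which characterizes the algebra structure on an internal endomorphism object uniquely in terms of its evaluation morphism.

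For the construction of $\alpha$, I use the canonical monoidal equivalence $\CU\boxtimes_\CB\CV \simeq \Fun_{\CL|\CN}(\CX\boxtimes_\CM\CY,\, \CX\boxtimes_\CM\CY)$ from \eqref{eq:isomorphism}, under which $F\boxtimes_\CB G$ acts as $F(-)\otimes_M G(-)$. The evaluation morphisms $\ev_x\colon [x,x']_\CU(x)\to x'$ and $\ev_y\colon [y,y']_\CV(y)\to y'$ combine, via this equivalence, into a morphism
\[
\bigl([x,x']_\CU\boxtimes_\CB [y,y']_\CV\bigr)(x\otimes_M y) \;\simeq\; [x,x']_\CU(x)\otimes_M [y,y']_\CV(y) \;\xrightarrow{\ev_x\otimes_M\ev_y}\; x'\otimes_M y'
\]
in $\CX\boxtimes_\CM\CY$. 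The right $Z(M)$-action on $[x,x']_\CU$ and the left $Z(M)$-action on $[y,y']_\CV$ descend from the canonical $Z(M)$-module structures on $x,x'\in\CX_M$ and $y,y'\in{}_M\CY$, and these actions are naturally balanced by $\otimes_M$. The above composite therefore factors through the coequalizer \eqref{eq:coequalizer-B}, yielding a morphism $([x,x']_\CU\otimes_{Z(M)}[y,y']_\CV)(x\otimes_M y)\to x'\otimes_M y'$ to which the universal property of $[x\otimes_M y, x'\otimes_M y']_{\CU\boxtimes_\CB\CV}$ applies to produce $\alpha$.

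To show $\alpha$ is an isomorphism I test by applying $\Hom_{\CU\boxtimes_\CB\CV}(F\boxtimes_\CB G,-)$ for varying $F\in\CU$ and $G\in\CV$, which together generate $\CU\boxtimes_\CB\CV$ under colimits. The target becomes $\Hom_{\CX\boxtimes_\CM\CY}(F(x)\otimes_M G(y),\, x'\otimes_M y')$, while the source, after unwinding the coequalizer description of $\otimes_{Z(M)}$ together with the twin adjunctions defining $[x,x']_\CU$ and $[y,y']_\CV$, becomes the set of $Z(M)$-balanced pairs of morphisms $F(x)\to x'$ in $\CX$ and $G(y)\to y'$ in $\CY$. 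Exactness of $-\otimes_M-$ in each variable, guaranteed by $M$ being a simple exact algebra so that ${}_M\CM_M$ is a tensor category, then identifies such balanced pairs with single morphisms $F(x)\otimes_M G(y)\to x'\otimes_M y'$, matching the target.

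Finally, for part (2), when $x=x'$ and $y=y'$ both sides of \eqref{eqn:main} carry canonical algebra structures: the left as the relative tensor of the two $Z(M)$-algebras $[x,x]_\CU$ and $[y,y]_\CV$ over the commutative algebra $Z(M)$, and the right as an internal endomorphism algebra. Lemma \ref{lem:internal_hom_alg_hom} characterizes each such structure as the unique one rendering the pertinent evaluation an algebra homomorphism; since $\alpha$ was built to intertwine these evaluations, it is automatically an algebra homomorphism, and by part (1) an algebra isomorphism. The main obstacle I anticipate is the detailed verification, in the Yoneda step, that the $Z(M)$-balancing on the two sides matches exactly. This requires careful tracking of how the left/right $Z(M)$-actions are induced on $[x,x']_\CU$ and $[y,y']_\CV$ from the $Z(M)$-module structures on $x,x',y,y'$, and depends crucially on the simplicity and exactness of $M$ to ensure that relative tensor products over $M$ and $Z(M)$ commute with the relevant limits and colimits.
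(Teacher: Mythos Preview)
Your construction of the comparison map $\alpha$ is fine, but the Yoneda test that follows has a genuine gap, and the proposal as written does not close it.

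The first problem is purely formal: $[x,x']_\CU\otimes_{Z(M)}[y,y']_\CV$ is defined as a \emph{coequalizer} in $\CU\boxtimes_\CB\CV$, and $\Hom_{\CU\boxtimes_\CB\CV}(F\boxtimes_\CB G,-)$ is only left exact, so it does not carry this coequalizer to anything you can ``unwind''. You would need the test objects $F\boxtimes_\CB G$ to be projective in $\CU\boxtimes_\CB\CV$, which is not automatic and which you do not argue. Even granting projectivity, you still need a formula for $\Hom_{\CU\boxtimes_\CB\CV}(F\boxtimes_\CB G,\,A\boxtimes_\CB B)$ in terms of Homs in $\CU$ and $\CV$ separately; no such formula is available in general for relative Deligne tensor products, and you invoke none.

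The second problem is substantive: your claim that ``$Z(M)$-balanced pairs of morphisms $F(x)\to x'$ and $G(y)\to y'$'' are in bijection with morphisms $F(x)\otimes_M G(y)\to x'\otimes_M y'$ is essentially the heart of the theorem, not a consequence of exactness of $-\otimes_M-$. Exactness in each variable only tells you that $\otimes_M$ preserves short exact sequences, not that it is fully faithful after any kind of balancing. The paper isolates exactly the statement you are missing as Lemma~\ref{lem:inhom}, which computes $\Hom_{\CX\boxtimes_\CM\CY}(x'\otimes_{M'}y',\,x\otimes_M y)$ as $\Hom_{{}_{M'}\CM_{M'}}(M',\,[x',x]_{\CM^\rev}\otimes_M[y',y]_\CM)$; this is proved by writing $\CX,\CY$ as module categories over algebras and computing concretely. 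The paper then uses Proposition~\ref{prop:fun-hom} to write $[x,x']_\CU$ itself in explicit form, reduces in two steps to the core case $\CL=\CN=\bk$, $\CX=\CY=\CM$, $x=x'=y=y'=M$, and settles that case (Proposition~\ref{prop:exactalg}) by an explicit coequalizer computation resting on the nontrivial Proposition~\ref{prop:coequ}, which identifies the coequalizer of the two $Z(\one_\CC)$-actions on any $x\in\CC$. None of this machinery is visible in your outline, and without it the argument does not go through.

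For part (2), note that Lemma~\ref{lem:internal_hom_alg_hom} is stated only for $[\one_\CM,\one_\CM]$; the analogous statement for general $[x,x]$ is not recorded in the paper. The paper instead verifies the algebra compatibility by tracking the explicit isomorphisms produced in part (1) through a concrete diagram (see the proof of Theorem~\ref{thm:main_formula}(2)), again reducing to the special case handled in Proposition~\ref{prop:exactalg}.
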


\begin{prop} \label{prop:fun-hom}
Let $\CL$ be a multi-tensor category and $L$ be an exact algebra in $\CL$. Let $\CX$ be a finite left $\CL$-module and let $\CU=\Fun_\CL(\CX,\CX)$.
We have a natural isomorphism $[x,x']_\CU \simeq [-,x]_\CL^R\otimes^{L^R} x'$ for $x,x'\in{}_L\CX$.
\end{prop}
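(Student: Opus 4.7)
My strategy is to verify that the right-hand side satisfies the universal property
\[
\Hom_\CU\bigl(F,\,[-,x]_\CL^R\otimes^{L^R}x'\bigr)\;\simeq\;\Hom_\CX(F(x),x'),\qquad F\in\CU=\Fun_\CL(\CX,\CX),
\]
characterizing $[x,x']_\CU$.

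The first step is to interpret the right-hand side. The defining adjunction $\Hom_\CL(c,[y,x]_\CL)\simeq\Hom_\CX(c\odot y,x)$ characterizes $[-,x]_\CL$, and so its right adjoint $[-,x]_\CL^R$ is determined uniquely. The left $L$-module structure on $x$ induces the canonical algebra map $L\to[x,x]_\CL$, which refines $[-,x]_\CL$ to take values in right $L$-modules in $\CL$; dualising this enrichment equips $[-,x]_\CL^R$ with a compatible left $L^R$-coaction. The left $L$-action on $x'$ is equivalently a right $L^R$-coaction. With these structures in place, the cotensor $\otimes^{L^R}$ defined at the end of Section~\ref{sec:left-right-full-centers} produces an equalizer, well-defined precisely because $L$ is exact (equivalently, $L^R$ is an exact coalgebra), and this equalizer is naturally an object of $\CU$.

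The second step is a chain of canonical isomorphisms. Since $\Hom_\CU(F,-)$ is left exact and $L^R$ is exact as a coalgebra, this Hom-functor commutes with the defining equalizer of $\otimes^{L^R}$. Applying the adjunction $[-,x]_\CL\dashv[-,x]_\CL^R$ together with the basic internal-hom adjunction and the $\CL$-linearity of $F$ (in particular the natural isomorphism $F(L\odot x)\simeq L\odot F(x)$), the equalizer of the two Hom-spaces unwinds to $\Hom_\CX(F(x),x')$: the two parallel arrows, coming respectively from the $L^R$-coactions on $[-,x]_\CL^R$ and on $x'$, are arranged so that passing to the equalizer cancels out the extra $L$-datum introduced by the cotensor and recovers the unrestricted Hom-space. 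Naturality in $F$ is then immediate from the naturality of each adjunction step.

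The chief technical obstacle is bookkeeping the module and comodule structures through these adjunction isomorphisms, and verifying that the two arrows of the cotensor's defining equalizer translate, after the adjunctions have been applied, to the pair whose equalizer indeed gives the unrestricted Hom-space on the right-hand side of the universal property. The exactness hypothesis on $L$ plays a double role: it ensures that $[-,x]_\CL^R\otimes^{L^R}x'$ is an honest object of $\CU$ rather than a merely formal limit, and it lets one commute $\Hom_\CU(F,-)$ past the defining equalizer of the cotensor. Beyond these points, the proof is a formal manipulation of adjunctions.
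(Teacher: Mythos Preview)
Your strategy could work, but there are two coupled errors. First, the universal property is misstated: since the relevant $\CU$-module is ${}_L\CX$ (not $\CX$; see the paragraph preceding Theorem~\ref{thm:main_formula}), the defining adjunction reads $\Hom_\CU(F,[x,x']_\CU)\simeq\Hom_{{}_L\CX}(F(x),x')$. Were the module $\CX$, the answer would be $[-,x]_\CL^R\odot x'$ with no dependence on $L$, contradicting the formula. Second, your equalizer step is backwards. Applying $\Hom_\CU(F,-)$ to the cotensor diagram and invoking the $L=\one_\CL$ case yields the equalizer of $\Hom_\CX(F(x),x')\rightrightarrows\Hom_\CX(L\odot F(x),x')$, one arrow using the $L$-action on $F(x)$ and the other the $L$-action on $x'$; this equalizer is exactly $\Hom_{{}_L\CX}(F(x),x')$, the $L$-linear maps. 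The cotensor does not ``cancel out'' the $L$-datum---it \emph{imposes} $L$-equivariance. With both corrections the argument goes through, so the approach is salvageable, but as written the logic is wrong at both ends. A smaller point: $[-,x]_\CL^R$ denotes the pointwise right dual $m\mapsto[m,x]_\CL^R$ in $\CL$, not a right adjoint of $[-,x]_\CL$, and the identification $\Hom_\CU(F,[-,x]_\CL^R\odot x')\simeq\Hom_\CX(F(x),x')$ you rely on is itself the $L=\one_\CL$ case of the proposition and still needs justification.

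By contrast, the paper picks a presentation $\CX\simeq\CL_X$, identifies $\CU\simeq({}_X\CL_X)^\rev$, computes $[x,x']_\CU\simeq x^R\otimes^{L^R}x'$ directly in ${}_X\CL_X$, and then translates back to a functor via $-\otimes_X(x^R\otimes^{L^R}x')\simeq(-\otimes_X x^R)\otimes^{L^R}x'\simeq[-,x]_\CL^R\otimes^{L^R}x'$; exactness of $L$ enters only in this last step, to commute $-\otimes_X$ past the cotensor equalizer.
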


\begin{proof}
Suppose that $\CX=\CL_X$ and identify ${}_L\CX$ with ${}_L\CL_X$, $\CU$ with $({}_X\CL_X)^\rev$. Then $[x,x']_\CU \simeq x^R\otimes^{L^R} x'$. Thus $[x,x']_\CU \simeq -\otimes_X (x^R\otimes^{L^R} x') \simeq (-\otimes_X x^R)\otimes^{L^R} x' \simeq [-,x]_\CL^R\otimes^{L^R} x'$, where the second isomorphism is due to the exactness of $-\otimes^{L^R} x'$.
\end{proof}

\begin{rem}
In the situation of Proposition \ref{prop:fun-hom}, suppose that $\CX$ is a finite $\CL$-$\CM$-bimodule, where $\CM$ is another multi-tensor category, so that $\CU=\Fun_\CL(\CX,\CX)$ is an $\CM$-$\CM$-bimodule. Then $m\odot[x,x']_\CU \odot m' \simeq [x\odot m^L,x'\odot m']_\CU$ for $m,m'\in\CM$. 
\end{rem}

\begin{rem}
In the situation of Proposition \ref{prop:fun-hom}, we have $[x',x'']_\CU\circ[x,x']_\CU \simeq [-,x]_\CL^R\otimes^{L^R} [x',x']_\CL^R\otimes^{L^R} x''$ and the composition $[x',x'']_\CU\circ[x,x']_\CU\to[x,x'']_\CU$ is induced by the canonical morphism $L\to[x',x']_\CL$.
\end{rem}

\begin{rem}[Uniqueness of unit] \label{rem:unique-unit}
If $(A,u,m)$ and $(A,u',m)$ are two algebras in a monoidal category then $u=u'$. In fact, $u=m\circ(u\otimes u')=u'$. As a consequence, to show that an isomorphism is an algebra isomorphism, it suffices to verify that it preserves multiplication. 
\end{rem}

\begin{prop} \label{prop:exactalg}
Let $\CL$ be an indecomposable multi-tensor category and $L$ be a simple exact algebra in $\CL$. The monoidal equivalence $F: \CL\boxtimes_{\FZ(\CL)}\CL^\rev \to \Fun(\CL,\CL)$ maps the algebra $L\otimes_{Z(L)}L$ to $[L,L]_{\Fun(\CL,\CL)}$.
\end{prop}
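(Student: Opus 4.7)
The plan is to identify both sides of the claimed isomorphism with a common explicit description, using Proposition \ref{prop:fun-hom} as the key technical input.

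First, I would apply Proposition \ref{prop:fun-hom} with its $\CL$, $L$, and $\CX$ taken to be $\bk$, $k$, and the ambient $\CL$, respectively. The unit $k$ is trivially an exact algebra in $\bk$, so the hypotheses are met, and the resulting $\CU$ is $\Fun(\CL,\CL)$. The formula yields a natural isomorphism of functors
\[
[L,L]_{\Fun(\CL,\CL)}(a) \simeq \Hom_\CL(a,L)^\vee \otimes L, \qquad a\in\CL,
\]
where $\Hom_\CL(a,L)^\vee$ denotes the $k$-linear dual of the hom space, tensored externally with $L$. In parallel, since $F$ preserves colimits and sends $l\boxtimes l'$ to $l\otimes-\otimes l'$, the algebra $F(L\otimes_{Z(L)}L)$ is the coequalizer in $\Fun(\CL,\CL)$ of the two parallel morphisms $L\otimes Z(L)\otimes-\otimes L\rightrightarrows L\otimes -\otimes L$, where the first is the right $Z(L)$-action $\rho$ on the leftmost $L$ and the second is obtained by transporting $Z(L)$ past $-\otimes L$ using its half-braiding and then applying $\rho$ on the rightmost $L$.

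Next, I would construct a comparison morphism $\phi:F(L\otimes_{Z(L)}L)\to[L,L]_{\Fun(\CL,\CL)}$. By the universal property of the internal hom, such a morphism corresponds to a morphism out of the coequalizer evaluated at $L$ landing in $L$. The natural candidate is the iterated multiplication $L\otimes L\otimes L\to L$, which factors through the coequalizer because the canonical algebra homomorphism $Z(L)\to L$ (arising from $\rho$ and $u_L$) is central in $L$ with respect to the half-braiding of $Z(L)$; this centrality is built into the definition of the full center in Section \ref{sec:left-right-full-centers}.

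Finally, I would show that $\phi$ is an isomorphism, which is the main obstacle. One approach is to verify the comparison at each $a\in\CL$ directly by matching the coequalizer computed above to $\Hom_\CL(a,L)^\vee\otimes L$, using the simplicity and exactness of $L$: exactness ensures that the relative tensor product $-\otimes_{Z(L)}-$ is well-behaved in each variable, while simplicity (equivalently, ${}_L\CL_L$ being a tensor category) controls the endomorphism structure and makes $\phi_a$ an isomorphism on each $a$. Alternatively, a Morita-theoretic argument shows that both $F(L\otimes_{Z(L)}L)$ and $[L,L]_{\Fun(\CL,\CL)}$ realize $\CL$ as a common module category (via the canonical action on $L$), so that the algebra homomorphism $\phi$ compatible with this structure must be an isomorphism.
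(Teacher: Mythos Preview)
Your setup matches the paper exactly: you identify $[L,L]_{\Fun(\CL,\CL)}$ with $\Hom_\CL(-,L)^\vee\otimes L$ via Proposition~\ref{prop:fun-hom}, and you describe $F(L\otimes_{Z(L)}L)$ as the coequalizer of the two parallel morphisms $L\otimes Z(L)\otimes-\otimes L\rightrightarrows L\otimes-\otimes L$. The construction of the comparison map $\phi$ from the iterated multiplication is also fine.

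The genuine gap is the final step: you never actually compute the coequalizer, and neither of your two proposed arguments for ``$\phi$ is an isomorphism'' does real work. Saying that exactness makes $\otimes_{Z(L)}$ ``well-behaved'' and simplicity ``controls the endomorphism structure'' is not a proof; you have not explained why the coequalizer of those two specific maps collapses to $\Hom_\CL(a,L)^\vee\otimes L$. The Morita alternative is also incomplete: you would need to know that $\Fun(\CL,\CL)_{F(L\otimes_{Z(L)}L)}\simeq\CL$, which is essentially what is to be proved, and in any case an algebra map inducing a module equivalence need not be an isomorphism without further hypotheses.

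The paper fills this gap with a concrete computation you have not touched: it passes to the tensor category $\CC={}_L\CL_L$ (this is where simplicity of $L$ is used), rewrites the parallel pair as $Z(\one_\CC)\otimes x\rightrightarrows x$ with $x=L\otimes-\otimes L\in\CC$, and invokes Proposition~\ref{prop:coequ}(2), which identifies that coequalizer with $\Hom_\CC(x,\one_\CC)^\vee\otimes\one_\CC$. That proposition in turn rests on the end formula $Z(\one_\CC)\simeq\int_a a\otimes a^L$ from Lemma~\ref{thm:center-end} and a careful analysis of when $\lambda_x=\rho_x$. This is the missing idea in your argument; without it (or a genuine replacement), the proof does not go through. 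The paper then separately checks that the resulting isomorphism respects multiplication, which you also do not address.
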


\begin{proof}
Let $\CC$ denote the tensor category ${}_L\CL_L$. Identify $\FZ(\CL)$ and $Z(L)$ with $\FZ(\CC)$ and $Z(\one_\CC)$, respectively.
Note that $F(L\otimes_{Z(L)}L)$ is the coequalizer of the parallel morphisms $Z(L)\otimes L\otimes-\otimes L \rightrightarrows L\otimes-\otimes L$. Rewrite the diagram as $(Z(L)\otimes L)\otimes_L (L\otimes-\otimes L) \rightrightarrows L\otimes-\otimes L$ and let us compute the coequalizer in $\CC$. Invoking Proposition \ref{prop:coequ}(2), we see that the coequalizer is $\Hom_\CC(L\otimes-\otimes L,L)^\vee\otimes L$. It is isomorphic to $\Hom_\CL(-,L)^\vee\otimes L$ and, by Proposition \ref{prop:fun-hom}, to $[L,L]_{\Fun(\CL,\CL)}$, as desired.

The multiplication of $[L,L]_{\Fun(\CL,\CL)}$ is given by the morphism $\Hom_\CL(-,L)^\vee\otimes\Hom_\CL(L,L)^\vee\otimes L\to \Hom_\CL(-,L)^\vee\otimes L$ induced by the canonical map $\bk\to\Hom_\CL(L,L)$. That of $F(L\otimes_{Z(L)}L)$ is given by the morphism $\Hom_\CC(L\otimes-\otimes L,L)^\vee\otimes\Hom_\CC(L\otimes L\otimes L,L)^\vee\otimes L\to \Hom_\CL(L\otimes-\otimes L,L)^\vee\otimes L$ induced by the multiplication $L\otimes L\otimes L \to L$ and the equivalence $\Hom_\CC(L,L)\simeq\bk$. Therefore, $F(L\otimes_{Z(L)}L)$ and $[L,L]_{\Fun(\CL,\CL)}$ are isomorphic as algebras.
\end{proof}

\begin{lem} \label{lem:inhom}
Let $\CM$ be a multi-tensor category, $M'$ be an algebra in $\CM$, $M$ be an exact algebra in $\CM$, $\CX$ be a finite right $\CM$-module and $\CY$ be a finite left $\CM$-module. There is a natural isomorphism
$$\Hom_{\CX\boxtimes_\CM\CY}(x'\otimes_{M'} y',x\otimes_M y) \simeq \Hom_{{}_{M'}\CM_{M'}}(M',[x',x]_{\CM^\rev}\otimes_M[y',y]_\CM)$$
for $x'\in\CX_{M'}$, $y'\in{}_{M'}\CY$ and $x\in\CX_M$, $y\in{}_M\CY$.
\end{lem}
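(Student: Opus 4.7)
The plan is to establish the natural isomorphism by constructing explicit mutually inverse maps via the universal properties of internal hom, relative tensor products, and the $\CM$-balancing of $\CX\boxtimes_\CM\CY$, using the exactness of $M$ to guarantee compatibility of $\otimes_M$ with the relevant adjoints.

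First I would pin down the bimodule structures. Since $\CX$ is a left $\CM^{\rev}$-module, the internal hom $[x',x]_{\CM^{\rev}}$ is canonically a $[x',x']_{\CM^{\rev}}$-$[x,x]_{\CM^{\rev}}$-bimodule in $\CM$; combining with the algebra maps $M'\to [x',x']_{\CM^{\rev}}$ and $M\to[x,x]_{\CM^{\rev}}$ (coming from $x'\in\CX_{M'}$ and $x\in\CX_M$) gives it an $M'$-$M$-bimodule structure in $\CM$. Symmetrically, $[y',y]_\CM$ is an $M$-$M'$-bimodule in $\CM$. Exactness of $M$ then equips $[x',x]_{\CM^{\rev}}\otimes_M[y',y]_\CM$ with a well-defined $M'$-$M'$-bimodule structure.

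Next, for the direction right-to-left: given an $M'$-bilinear morphism $f\colon M'\to [x',x]_{\CM^{\rev}}\otimes_M[y',y]_\CM$, I define a morphism in $\CX\boxtimes_\CM\CY$ by the composition
\begin{align*}
x'\otimes y' \xrightarrow{\id\odot u_{M'}\otimes\id}\, & x'\odot M'\otimes y' \xrightarrow{\id\odot f\otimes\id} x'\odot([x',x]_{\CM^{\rev}}\otimes_M[y',y]_\CM)\otimes y'\\
\simeq\, &(x'\odot[x',x]_{\CM^{\rev}})\otimes_M([y',y]_\CM\odot y') \xrightarrow{\ev\otimes_M\ev} x\otimes_M y,
\end{align*}
where the middle isomorphism is the $\CM$-balancing in $\CX\boxtimes_\CM\CY$. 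The $M'$-bilinearity of $f$ ensures this composition is $M'$-balanced, so it descends through the coequalizer $x'\otimes y'\twoheadrightarrow x'\otimes_{M'}y'$ to give $\Psi(f)$.

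For the inverse direction, I would iterate the two internal hom adjunctions $\Hom_{\CX}(x'\odot a,x)\simeq\Hom_{\CM}(a,[x',x]_{\CM^{\rev}})$ and $\Hom_{\CY}(b\odot y',y)\simeq\Hom_{\CM}(b,[y',y]_\CM)$: given $g\colon x'\otimes_{M'}y'\to x\otimes_M y$, first pre-compose with the canonical projection $x'\otimes y'\twoheadrightarrow x'\otimes_{M'}y'$, then use the $\CM$-balancing together with the two internal hom adjunctions to produce a morphism $\one_\CM\to[x',x]_{\CM^{\rev}}\otimes_M[y',y]_\CM$. The $M'$-balancedness of $g$ translates under these adjunctions into the bimodule compatibility, and it lifts uniquely to an $M'$-bimodule morphism $\Phi(g)\colon M'\to [x',x]_{\CM^{\rev}}\otimes_M[y',y]_\CM$. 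Mutual inverseness of $\Psi$ and $\Phi$ then follows from the triangle identities for the two adjunctions combined with the universal properties of the coequalizer $x'\otimes_{M'}y'$ and of $\otimes_M$, and naturality in all four variables is inherited from the naturality of the adjunctions.

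The main technical obstacle is the inverse step: one must track carefully how the $\CM$-balancing of $\CX\boxtimes_\CM\CY$ interacts with the two internal hom adjunctions and with the $\otimes_M$ appearing in the target. Exactness of $M$ enters precisely here, as it allows $\otimes_M$ to commute with the relevant right-adjoint functors so that the iterated adjunctions fit together coherently, and it guarantees that the $M'$-$M'$-bimodule structure on $[x',x]_{\CM^{\rev}}\otimes_M[y',y]_\CM$ is preserved under the various identifications.
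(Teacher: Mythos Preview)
Your forward map $\Psi$ is plausible, but the inverse $\Phi$ has a genuine gap. You write that given $g\colon x'\boxtimes_\CM y'\to x\otimes_M y$ you will ``use the $\CM$-balancing together with the two internal hom adjunctions'' to produce a morphism $\one_\CM\to[x',x]_{\CM^\rev}\otimes_M[y',y]_\CM$. But those two adjunctions describe maps in $\CX$ (namely $x'\odot a\to x$) and in $\CY$ (namely $b\odot y'\to y$), whereas $g$ lives in $\CX\boxtimes_\CM\CY$, a category in which neither adjunction applies directly. There is no mechanism in your outline for converting a morphism in $\CX\boxtimes_\CM\CY$ into data to which the individual internal hom adjunctions can be applied; ``$\CM$-balancing'' only tells you that $(x'\odot m)\boxtimes_\CM y'\simeq x'\boxtimes_\CM(m\odot y')$, which does not decompose $\Hom_{\CX\boxtimes_\CM\CY}$ in the needed way. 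Nor do maps \emph{into} the coequalizer $x\otimes_M y$ admit an easy description. So the step ``iterate the two internal hom adjunctions'' is not actually a step; it presupposes exactly the identification the lemma asserts.

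The paper's proof sidesteps this entirely: it invokes Proposition~\ref{prop:kz_module_alg} and Theorem~\ref{thm:tensor-product-exits} to write $\CX={}_X\CM$, $\CY=\CM_Y$, so that $\CX\boxtimes_\CM\CY\simeq{}_X\CM_Y$ concretely. Then both sides of the isomorphism become $\Hom$-spaces in (bi)module categories over $\CM$, and the explicit formulas $[x',x]_{\CM^\rev}\simeq x'^R\otimes^{X^R}x$ and $[y',y]_\CM\simeq y\otimes^{Y^L}y'^L$ (from the known computation of internal homs in $\CC_A$) reduce the claim to a straightforward duality adjunction inside $\CM$. This concrete realization is precisely the missing ingredient that lets one compute $\Hom_{\CX\boxtimes_\CM\CY}$; your abstract approach would need an analogous input (for instance, a right adjoint to $a\mapsto (x'\odot a)\boxtimes_\CM y'$ computed explicitly) before the argument could close.
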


\begin{proof}
Suppose that $\CX={}_X\CM$, $\CY=\CM_Y$ where $X,Y\in\Alg(\CM)$. Then 
LHS $\simeq \Hom_{{}_X\CM_Y}(x'\otimes_{M'} y',x\otimes_M y) 
\simeq \Hom_{{}_{M'}\CM_{M'}}(M',x'^R\otimes^{X^R}x\otimes_M y\otimes^{Y^L}y'^L)
\simeq$ RHS.
\end{proof}

\begin{proof}[Proof of Theorem \ref{thm:main_formula}(1)]
Let $\CW=\Fun_{\CL|\CN}(\CX\boxtimes_\CM\CY,\CX\boxtimes_\CM\CY)$. The equivalence $\CU \boxtimes_\CB \CV \simeq \CW$ maps $[x,x']_\CU \otimes_{Z(M)} [y,y']_\CV$ to 
\begin{equation*}
\begin{split}
[x,x']_\CU \otimes_{Z(M)} [y,y']_\CV 
\simeq ([-,x]_{\CL\boxtimes\CM^\rev}^R \otimes^{L^R\boxtimes M^L} x') \otimes_{Z(M)} ([-,y]_{\CM\boxtimes\CN^\rev}^R \otimes^{M^R\boxtimes N^L} y')
\end{split}
\end{equation*}
where the isomorphism is due to Proposition \ref{prop:fun-hom}.
On the other hand side,
\begin{equation*}
\begin{split}
[x\otimes_M y,x'\otimes_M y']_\CW 
\simeq & [-\boxtimes_\CM-,x\otimes_M y]_{\CL\boxtimes\CN^\rev}^R \otimes^{L^R\boxtimes N^L} (x'\otimes_M y') \\
\simeq & \Hom_\CM(\one_\CM,[-,x]_{\CL\boxtimes\CM^\rev}\otimes_M[-,y]_{\CM\boxtimes\CN^\rev})^R \otimes^{L^R\boxtimes N^L} (x'\otimes_M y')
\end{split}
\end{equation*}
where the last isomorphism is due to Lemma \ref{lem:inhom}.
Comparing the right hand sides of the above two equations, one observes that the theorem can be reduced to the special case $L=\one_\CL$ and $N=\one_\CN$. 
Then notice that both expressions depend only on $[-,x]_{\CL\boxtimes\CM^\rev}$, $[-,y]_{\CM\boxtimes\CN^\rev}$, $x'\simeq x'\otimes_M M$, $y'\simeq M\otimes_M y'$ and $M$, therefore it suffices to show that
$$(u^R\otimes^{M^L}M)\otimes_{Z(M)}(v^R\otimes^{M^R}M) \simeq \Hom_\CM(\one_\CM,u\otimes_M v)^R\boxtimes M$$
in $\CL\boxtimes\CM\boxtimes\CN$ for $u\in(\CL\boxtimes\CM^\rev)_M$ and $v\in{}_M(\CM\boxtimes\CN^\rev)$. This reduces the theorem to the special case $\CL=\CN=\bk$, $\CX=\CY=\CM$ and $x=x'=y=y'=M$. 
This special case is exactly Proposition \ref{prop:exactalg}.
\end{proof}

\begin{proof}[Proof of Theorem \ref{thm:main_formula}(2)]
For simplicity we assume $L=\one_\CL$ and $N=\one_\CN$. We need to show that the isomorphism \eqref{eqn:main} is compatible with the multiplications of the two algebras (see Remark \ref{rem:unique-unit}). This amounts to show the commutativity of the outer square of the following digram for $u\in(\CL\boxtimes\CM^\rev)_M$ and $v\in{}_M(\CM\boxtimes\CN^\rev)$
$$\scriptsize\xymatrix@R=2em{
 (u^R\otimes^{M^L}[x,x]^R\otimes^{M^L}M) \otimes_{Z(M)\otimes Z(M)} (v^R\otimes^{M^R}[y,y]^R\otimes^{M^R}M) \ar[r]^-\sim \ar[r]^-\sim \ar[d]_\alpha & \Hom_\CM(\one_\CM,u\otimes_M v)^R \otimes \Hom_{{}_M\CM_M}(M,[x,x]\otimes_M[y,y])^R\boxtimes M \ar[d]^\beta \\
 (u^R\otimes^{M^L}M^L\otimes^{M^L}M) \otimes_{Z(M)\otimes Z(M)} (v^R\otimes^{M^R}M^R\otimes^{M^R}M) \ar[r]^-\sim \ar[r]^-\sim \ar[d]_\gamma & \Hom_\CM(\one_\CM,u\otimes_M v)^R \otimes \Hom_{{}_M\CM_M}(M,M)^R\boxtimes M \ar[d]^\delta \\
 (u^R\otimes^{M^L}M) \otimes_{Z(M)} (v^R\otimes^{M^R}M) \ar[r]^-\sim & \Hom_\CM(\one_\CM,u\otimes_M v)^R\boxtimes M \\ 
}$$
where $\alpha,\beta$ are induced by the canonical morphisms $L\boxtimes M\to[x,x]_{\CL\boxtimes\CM^\rev}$ and $M\boxtimes N\to[y,y]_{\CM\boxtimes\CN^\rev}$, $\gamma$ induced by the multiplication morphism $Z(M)\otimes Z(M)\to Z(M)$, $\delta$ induced by the equivalence $\Hom_{{}_M\CM_M}(M,M)\simeq\bk$.
It is clear that the upper square of the diagram is commutative. Proposition \ref{prop:exactalg} states that the theorem holds for the special case $\CL=\CN=\bk$, $\CX=\CY=\CM$ and $x=x'=y=y'=M$, therefore the lower square is commutative. This completes the proof.
\end{proof}

\subsection{Pointed Drinfeld center functor} \label{sec:Df-center-functor}

We introduce two symmetric monoidal categories $\mtc_\bullet$ and $\btc_\bullet$ as follows:
\begin{itemize}
\item 
An object of $\mtc_\bullet$ is a pair $(\CL,L)$ where $\CL$ is an indecomposable multi-tensor category over $k$ and $L$ is a simple exact algebra in $\CL$.
A morphism between two objects $(\CL,L)$ and $(\CM,M)$ is an equivalence class of pairs $(\CX,x)$ where $\CX$ is a finite $\CL$-$\CM$-bimodule and $x\in{}_L\CX_M$; two pairs $(\CX,x)$ and $(\CX',x')$ are equivalent if there exist a bimodule equivalence $F:\CX\to\CX'$ and a bimodule isomorphism $F(x)\simeq x'$.
The composition of two morphisms $(\CX,x):(\CL,L)\to(\CM,M)$ and $(\CY,y):(\CM,M)\to(\CN,N)$ is given by $(\CX\boxtimes_\CM\CY,x\otimes_M y)$.
\item
An object of $\btc_\bullet$ is a pair $(\CA,A)$ where $\CA$ is a braided tensor category over $k$ and $A$ is a commutative algebra in $\CA$.
A morphism between two objects $(\CA,A)$ and $(\CB,B)$ is an equivalence class of pairs $(\CU,U)$ where $\CU$ is a monoidal $\CA$-$\CB$-bimodule and $U\in{}_A\Alg(\CU)_B$; two pairs $(\CU,U)$ and $(\CU',U')$ are equivalent if there exist a monoidal bimodule equivalence $F:\CU\to\CU'$ and a bimodule isomorphism $F(U)\simeq U'$. 
The composition of two morphisms $(\CU,U):(\CA,A)\to(\CB,B)$ and $(\CV,V):(\CB,B)\to(\CC,C)$ is given by $(\CU\boxtimes_\CB\CV,U\otimes_B V)$.
\end{itemize}
The tensor product functors of both categories are Deligne's tensor product $\boxtimes$.

\medskip

The following theorem generalizes Theorem \ref{thm:Z-functor}:

\begin{thm} \label{thm:Df-center}
The assignment 
$$(\CL,L)\mapsto(\FZ(\CL),Z(L)), \quad ({}_\CL\CX_\CM,{}_L x_M) \mapsto (\FZ^{(1)}(\CX):=\Fun_{\CL|\CM}(\CX,\CX),Z^{(1)}(x):=[x,x]_{\FZ^{(1)}(\CX)})$$ 
defines a symmetric monoidal functor 
$$\FZ:\mtc_\bullet\to\btc_\bullet.$$
\end{thm}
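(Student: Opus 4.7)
The plan is to verify three things in turn: well-definedness of the assignment on objects and morphisms, compatibility with identities and composition, and compatibility with the symmetric monoidal structure.

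First, for well-definedness: given an object $(\CL,L)$, the braided tensor category structure on $\FZ(\CL)$ is standard, and the commutativity of $Z(L)$ in $\FZ(\CL)$ is exactly Corollary~\ref{cor:ZA-commutative}. For a morphism $(\CX,x):(\CL,L)\to(\CM,M)$, Example~\ref{exam:Fun_monoidal_bimodule} already exhibits $\FZ^{(1)}(\CX)=\Fun_{\CL|\CM}(\CX,\CX)$ as a monoidal $\FZ(\CL)$-$\FZ(\CM)$-bimodule; for the algebra side I would invoke the construction at the start of Section~\ref{sec:formula}, where the $L$-$M$-bimodule structure on $x$ induces a left $[x,x]_{\FZ^{(1)}(\CX)}\odot Z(M)$-module and a right $Z(L)\odot[x,x]_{\FZ^{(1)}(\CX)}$-module structure on $x$, and then apply Theorem~\ref{thm:left-center-definition-2} to turn these into unital $Z(M)$- and $Z(L)$-actions on $Z^{(1)}(x)=[x,x]_{\FZ^{(1)}(\CX)}$. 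Invariance under the equivalence relation on morphisms is automatic since a bimodule equivalence $F:\CX\to\CX'$ with $F(x)\simeq x'$ induces a monoidal equivalence $\FZ^{(1)}(\CX)\simeq\FZ^{(1)}(\CX')$ transporting internal homs.

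Second, for functoriality, the identity $(\CL,L)\mapsto(\FZ(\CL),Z(L))$ is immediate from the identification $\Fun_{\CL|\CL}(\CL,\CL)\simeq\FZ(\CL)$ and $[\one_\CL,\one_\CL]_{\FZ(\CL)}\simeq Z(\one_\CL)$ (Corollary~\ref{cor:center-A}). For composition of $(\CX,x)$ and $(\CY,y)$, the equivalence of monoidal bimodules
\[
\FZ^{(1)}(\CX)\boxtimes_{\FZ(\CM)}\FZ^{(1)}(\CY)\simeq\FZ^{(1)}(\CX\boxtimes_\CM\CY)
\]
is exactly the functoriality provided by Theorem~\ref{thm:Z-functor}, while the corresponding algebra isomorphism
\[
Z^{(1)}(x)\otimes_{Z(M)}Z^{(1)}(y)\simeq Z^{(1)}(x\otimes_M y)
\]
is precisely Theorem~\ref{thm:main_formula}(2). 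Combining these gives the required isomorphism of morphisms in $\btc_\bullet$; coherence for associativity of composition is inherited from the coherence of the Drinfeld center functor $\FZ$ together with the naturality of the formula in Theorem~\ref{thm:main_formula}.

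Third, for the symmetric monoidal structure, the tensor products in both $\mtc_\bullet$ and $\btc_\bullet$ are Deligne's $\boxtimes$. Since $\FZ$ is already symmetric monoidal on $\mtc$ by Theorem~\ref{thm:Z-functor}, the only new task is compatibility on the algebra/pointing side: the canonical isomorphism $Z(L\boxtimes L')\simeq Z(L)\boxtimes Z(L')$ (the corollary following Corollary~\ref{cor:center-sum}) settles the object part, and the analogous identification $[x\boxtimes x',x\boxtimes x']\simeq [x,x]\boxtimes[x',x']$ of internal homs, which follows formally from the defining universal property applied componentwise, settles the morphism part.

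The principal obstacle is unambiguously the composition step, and in particular Theorem~\ref{thm:main_formula}(2). That theorem is proved in Section~\ref{sec:formula} by first reducing, via Proposition~\ref{prop:fun-hom} and Lemma~\ref{lem:inhom}, to the special case $\CL=\CN=\bk$, $\CX=\CY=\CM$, $x=x'=y=y'=M$, and then identifying $F(L\otimes_{Z(L)}L)$ with $[L,L]_{\Fun(\CL,\CL)}$ as algebras (Proposition~\ref{prop:exactalg}). Exactness of the algebras is essential for the relative tensor products $\otimes_M$ and $\otimes^{L^R}$ to interact correctly, and simplicity is used via $\Hom_{{}_M\CM_M}(M,M)\simeq\bk$ when matching multiplications in the key diagram at the end of the proof of Theorem~\ref{thm:main_formula}(2).
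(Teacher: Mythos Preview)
Your approach is correct and matches the paper's own proof, which is essentially a two-line invocation of Corollary~\ref{cor:center-A}(1) for identities and Theorems~\ref{thm:Z-functor} and~\ref{thm:main_formula} for composition; you have simply unpacked these references more explicitly and added the (routine) verification of the symmetric monoidal compatibility. One small slip: in the identity step the distinguished object of the identity morphism on $(\CL,L)$ is $L\in{}_L\CL_L$, not $\one_\CL$, so what you need is $[L,L]_{\FZ(\CL)}\simeq Z(L)$; this is indeed Corollary~\ref{cor:center-A}(1) because $L=\one_{{}_L\CL_L}$, but the statement $[\one_\CL,\one_\CL]\simeq Z(\one_\CL)$ only handles the case $L=\one_\CL$.
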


\begin{proof}
By Corollary \ref{cor:center-A}(1), $\FZ$ preserves identity morphism. By Theorem \ref{thm:Z-functor} and Theorem \ref{thm:main_formula}, $\FZ$ preserves composition law.
\end{proof}

Note that $(\FZ(\CL),Z(L))$ is indeed the center of $(\CL,L)$ as shown in \cite{s2}. We will refer to this functor $\FZ$ as the {\em pointed Drinfeld center functor}.

\begin{prop} \label{prop:cll}
Let $(\CL,L)$ be an object of $\mtc_\bullet$. 
$(1)$ The morphism $(\CL_L,L):(\CL,L)\to({}_L\CL_L,L)$ is inverse to $({}_L\CL,L):({}_L\CL_L,L)\to(\CL,L)$.
$(2)$ The pairs $\FZ(\CL,L)$ and $\FZ({}_L\CL_L,L)$ are canonically identified so that $\FZ(\CL_L,L)$ is the identity morphism.
\end{prop}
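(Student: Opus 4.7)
The plan is to verify Part (1) by direct computation with Theorem~\ref{thm:tensor-product-exits} and then deduce Part (2) as a formal consequence of the functoriality of the pointed Drinfeld center functor established in Theorem~\ref{thm:Df-center}.

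For Part (1), I would compute the two composites in $\mtc_\bullet$ separately. First, $(\CL_L,L)\circ({}_L\CL,L)=({}_L\CL\boxtimes_\CL\CL_L,\,L\otimes_L L)$ by the composition rule of $\mtc_\bullet$. Theorem~\ref{thm:tensor-product-exits} applied with middle category $\CL$ supplies a canonical equivalence of ${}_L\CL_L$-${}_L\CL_L$-bimodules ${}_L\CL\boxtimes_\CL\CL_L\simeq{}_L\CL_L$, sending $y\boxtimes_\CL x\mapsto y\otimes x$; under it the marked object $L\otimes_L L$ is identified with the tensor unit $L=\one_{{}_L\CL_L}\in{}_L({}_L\CL_L)_L$, so this composite is the identity on $({}_L\CL_L,L)$. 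Second, $({}_L\CL,L)\circ(\CL_L,L)=(\CL_L\boxtimes_{{}_L\CL_L}{}_L\CL,\,L\otimes_L L)$. The canonical $\CL$-$\CL$-bilinear functor
\[
\CL_L\boxtimes_{{}_L\CL_L}{}_L\CL\longrightarrow\CL,\qquad x\boxtimes_{{}_L\CL_L} y\mapsto x\otimes_L y,
\]
is an equivalence by the standard Morita equivalence between $\CL$ and ${}_L\CL_L$ induced by the simple exact algebra $L$. In the present framework this may be verified by realising $\CL_L$ (resp.\ ${}_L\CL$) as a right (resp.\ left) ${}_L\CL_L$-module of the shape required by Theorem~\ref{thm:tensor-product-exits} via Proposition~\ref{prop:kz_module_alg}, the exactness of $L$ being precisely what makes ${}_L\CL_L$ a multi-tensor category. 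Under this equivalence $L\otimes_L L$ corresponds to $L\in\CL$, so the composite is the identity on $(\CL,L)$.

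For Part (2), apply the symmetric monoidal functor $\FZ:\mtc_\bullet\to\btc_\bullet$ from Theorem~\ref{thm:Df-center} to the mutually inverse pair of Part (1). Functoriality produces mutually inverse morphisms $\FZ(\CL_L,L)$ and $\FZ({}_L\CL,L)$ in $\btc_\bullet$, and these induce a canonical identification of the pairs $\FZ(\CL,L)=(\FZ(\CL),Z(L))$ and $\FZ({}_L\CL_L,L)=(\FZ({}_L\CL_L),Z(L))$. On the underlying braided tensor categories this specialises to the Morita invariance of Drinfeld center, while the two copies of $Z(L)$ are matched under this identification by Corollary~\ref{cor:center-A}(3). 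With respect to this identification, the relation $\FZ({}_L\CL,L)\circ\FZ(\CL_L,L)=\FZ(\id_{(\CL,L)})=\id_{\FZ(\CL,L)}$ forces $\FZ(\CL_L,L)$ itself to be the identity morphism on $\FZ(\CL,L)$.

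The main technical obstacle is establishing the Morita equivalence $\CL_L\boxtimes_{{}_L\CL_L}{}_L\CL\simeq\CL$ needed in Part (1); once this is in hand, everything else reduces, via Theorem~\ref{thm:tensor-product-exits} and the functoriality of $\FZ$, to formal manipulation.
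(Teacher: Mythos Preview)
Your Part~(1) is correct and matches the paper's approach; the paper handles the key Morita equivalence $\CL_L\boxtimes_{{}_L\CL_L}{}_L\CL\simeq\CL$ by citing \cite[Theorem~3.27]{eo} directly rather than re-deriving it from the paper's internal results.

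For Part~(2), the paper takes a more direct route than yours: it cites \cite[Theorem~3.34, Corollary~3.35, Remark~3.36]{eo} for canonical monoidal equivalences $\FZ(\CL)\simeq\FZ^{(1)}(\CL_L)\simeq\FZ({}_L\CL_L)$ (the structure maps of the monoidal bimodule $\FZ^{(1)}(\CL_L)$ are themselves equivalences), and then Corollary~\ref{cor:center-A} for $Z(L)\simeq[L,L]_{\FZ^{(1)}(\CL_L)}\simeq Z(\one_{{}_L\CL_L})$. This \emph{is} the statement that $\FZ(\CL_L,L)$ represents the identity morphism under the resulting identification.

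Your detour through functoriality (Theorem~\ref{thm:Df-center}) is not wrong, but it does not deliver the conclusion. Knowing that $\FZ(\CL_L,L)$ and $\FZ({}_L\CL,L)$ are mutually inverse in $\btc_\bullet$ only tells you that, once source and target are identified, they become inverse \emph{automorphisms} of $(\FZ(\CL),Z(L))$; it does not ``force'' either one to be the identity, since this object may well have nontrivial automorphisms in $\btc_\bullet$ (cf.\ Corollary~\ref{cor:pic=aut}). The entire content lies in the explicit identification you mention in passing (Morita invariance of the Drinfeld center plus Corollary~\ref{cor:center-A}(3)), and once you have that in hand the functoriality step is redundant. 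If instead you intend to \emph{define} the identification via the invertible bimodule $\FZ^{(1)}(\CL_L)$ itself, then your last sentence becomes a tautology, and the actual claim requiring proof is that this identification agrees with the standard Morita-invariance equivalence---which is exactly what the paper's citations to \cite{eo} establish.
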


\begin{proof}
(1) It is clear that ${}_L\CL\boxtimes_\CL\CL_L \simeq {}_L\CL_L$. According to \cite[Theorem 3.27]{eo}, ${}_L\CL\boxtimes_{{}_L\CL_L}\CL_L \simeq \CL$. Namely, the $\CL$-${}_L\CL_L$-bimodule $\CL_L$ is inverse to ${}_L\CL$. Then the claim follows from the trivial isomorphism $L\otimes_L L\simeq L$.

(2) According to \cite[Theorem 3.34, Corollary 3.35 and Remark 3.36]{eo}, there are canonical monoidal equivalences $\FZ(\CL) \simeq \FZ^{(1)}(\CL_L) \simeq \FZ({}_L\CL_L)$ which induce a braided monoidal equivalence $\FZ(\CL) \simeq \FZ({}_L\CL_L)$. Moreover, $Z(L)\simeq [L,L]_{\FZ(\CL)} \simeq Z(\one_{{}_L\CL_L})$ by Corollary \ref{cor:center-A}.
\end{proof}

In the rest of this subsection, we assume $\chara k=0$.

\begin{defn}[\cite{dmno}]
A {\em Lagrangian algebra} in a non-degenerate braided fusion category $\CC$ is a commutative simple separable algebra $A$ such that the category of local $A$-modules in $\CC$ is equivalent to $\bk$.
\end{defn}

We introduce two symmetric monoidal subcategories $\mfus_\bullet\subset\mtc_\bullet$ and $\bfuscl_\bullet\subset\btc_\bullet$ as follows:
\begin{itemize}
\item 
An object of $\mfus_\bullet$ is a pair $(\CL,L)$ where $\CL$ is an indecomposable multi-fusion category and $L$ is a simple separable algebra in $\CL$.
A morphism between two objects $(\CL,L)$ and $(\CM,M)$ is an equivalence class of pairs $(\CX,x)$ where $\CX$ is a nonzero semisimple $\CL$-$\CM$-bimodule and $x$ is a nonzero $L$-$M$-bimodule in $\CX$. 
\item
An object of $\bfuscl_\bullet$ is a pair $(\CA,A)$ where $\CA$ is a non-degenerate braided fusion category and $A$ is a Lagrangian algebra in $\CA$.
A morphism between two objects $(\CA,A)$ and $(\CB,B)$ is an equivalence class of pairs $(\CU,U)$ where $\CU$ is a closed multi-fusion $\CA$-$\CB$-bimodule and $U$ is a closed $A$-$B$-bimodule in the category of separable algebras in $\CU$ (see Definition \ref{def:closed-AB-bimodule}). 
\end{itemize}
It is clear that $\mfus_\bullet$ is a well-defined subcategory, i.e. the class of morphisms in $\mfus_\bullet$ contains identity morphisms and is closed under composition. However, this is not obvious for $\bfuscl_\bullet$.

\begin{rem} \label{rem:ff}
Any object $(\CA,A)$ of $\bfuscl_\bullet$ has the form $\FZ(\CL,\one_\CL)$ where $\CL$ is a fusion category. In particular, $\CA$ is non-chiral. Indeed, we have $\CA\simeq\FZ(\CC_A)$ by \cite[Corollary 4.1(i)]{dmno} because $A$ is a Lagrangian algebra. Therefore, $(\CA,A)$ can be identified with $\FZ(\CA_A,A)$. 
\end{rem}

\begin{prop} 
The functor $\FZ:\mtc_\bullet\to\btc_\bullet$ maps objects in $\mfus_\bullet$ into $\bfuscl_\bullet$.\end{prop}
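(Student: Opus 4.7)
\medskip
\noindent\emph{Proof plan.} To check that $\FZ(\CL,L)=(\FZ(\CL),Z(L))$ lies in $\bfuscl_\bullet$, I need to verify two things:
(a) $\FZ(\CL)$ is a non-degenerate braided fusion category; and
(b) $Z(L)$ is a Lagrangian algebra in $\FZ(\CL)$, i.e.\ a commutative simple separable algebra whose category of local modules is equivalent to $\bk$.

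Condition (a) is immediate from Theorem \ref{thm:KZ-2}: since $\CL$ is an indecomposable multi-fusion category, it is an object of $\mfus$, and the image of $\FZ:\mfus\to\bfuscl$ consists of non-degenerate braided fusion categories.

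For (b), my first move is to reduce to a standard situation. Since $L$ is a simple separable algebra in the indecomposable multi-fusion $\CL$, the category $\CC\dfeq{}_L\CL_L$ is a fusion category (rigidity from exactness of $L$, simple unit from simplicity of $L$ as a bimodule, semisimplicity from separability). By Proposition \ref{prop:cll}(2), the pair $\FZ(\CL,L)$ is canonically identified with $\FZ(\CC,\one_\CC)=(\FZ(\CC),Z(\one_\CC))$, via the compatible isomorphisms of Corollary \ref{cor:center-A}. Hence it suffices to prove the classical statement: \emph{for every fusion category $\CC$, the algebra $Z(\one_\CC)$ is a Lagrangian algebra in $\FZ(\CC)$}.

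To finish, I would note that commutativity of $Z(\one_\CC)$ is Corollary \ref{cor:ZA-commutative}. By Lemma \ref{thm:center-end}(2) and (4), $Z(\one_\CC)\simeq\int_{x\in\CC}[x,x]_\CC$ is canonically the value $R(\one_\CC)$ of the right adjoint $R:\CC\to\FZ(\CC)$ of the forgetful functor $U:\FZ(\CC)\to\CC$ on the tensor unit. The fact that $R(\one_\CC)$ is a Lagrangian algebra in $\FZ(\CC)$ is a well-known result (see \cite{dmno}, or equivalently the Morita-theoretic description of Drinfeld centers): simplicity and separability follow from the monadicity of $U\dashv R$ together with the equivalence of $\FZ(\CC)$-module categories $\FZ(\CC)_{R(\one_\CC)}\simeq\CC$, and the local modules form a subcategory equivalent to $\bk$ because $\dim(R(\one_\CC))=\dim(\CC)$ meets the Lagrangian dimension bound. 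The only genuine work is the bookkeeping check that the identification of Proposition \ref{prop:cll}(2) carries $Z(L)\in\FZ(\CL)$ to $Z(\one_\CC)\in\FZ(\CC)$ as commutative algebras, which is built into the construction of that proposition via Corollary \ref{cor:center-A}, so this step is essentially free. I expect no serious obstacle beyond assembling these cited facts.
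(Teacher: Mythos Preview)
Your proposal is correct and follows essentially the same route as the paper: both reduce via Proposition \ref{prop:cll} to the case $(\CC,\one_\CC)$ with $\CC$ a fusion category, identify $Z(\one_\CC)$ with $[\one_\CC,\one_\CC]_{\FZ(\CC)}$ (equivalently $R(\one_\CC)$), and then invoke \cite{dmno} for the Lagrangian property. Your write-up is slightly more explicit about part (a) and about why $R(\one_\CC)$ is Lagrangian, but the structure of the argument is the same.
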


\begin{proof}
Let $(\CL,L)$ be an object of $\mfus_\bullet$. We need to show that $Z(L)$ is a Lagrangian algebra.
In view of Proposition \ref{prop:cll}, replacing $(\CL,L)$ by $({}_L\CL_L,L)$ if necessary we may assume that $\CL$ is a fusion category and $L=\one_\CL$. We have $Z(\one_\CC) \simeq [\one_\CC,\one_\CC]_{\FZ(\CC)}$ by Corollary \ref{cor:center-A}(2), while the proof of \cite[Proposition 4.1]{dmno} showed that $[\one_\CC,\one_\CC]_{\FZ(\CC)}$ is a Lagrangian algebra.  
\end{proof}

\begin{prop} \label{prop:ff1}
The functor $\FZ:\mtc_\bullet\to\btc_\bullet$ maps morphisms in $\mfus_\bullet$ into $\bfuscl_\bullet$.
\end{prop}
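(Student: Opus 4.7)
Write $\CU=\FZ^{(1)}(\CX)=\Fun_{\CL|\CM}(\CX,\CX)$ and $A=Z^{(1)}(x)=[x,x]_\CU$ for a morphism $(\CX,x):(\CL,L)\to(\CM,M)$ in $\mfus_\bullet$. I must verify three things in order to place $(\CU,A)$ in $\bfuscl_\bullet$: (i) $\CU$ is a closed multi-fusion $\FZ(\CL)$-$\FZ(\CM)$-bimodule; (ii) $A$ is a separable algebra in $\CU$; (iii) the canonical algebra homomorphism $\phi_\CU(Z(L)\boxtimes Z(M))\to Z(A)$ is an isomorphism. Assertion (i) is immediate from Theorem \ref{thm:KZ-2}, whose statement says that the restricted functor $\FZ:\mfus\to\bfuscl$ sends semisimple bimodules to closed multi-fusion bimodules. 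Assertion (ii) holds because $\CX$ is a semisimple left $\CU$-module (the multi-fusion category $\CU$ acts on the semisimple category $\CX$), so the internal hom $[x,x]_\CU$ is a semisimple algebra in $\CU$, hence separable in characteristic zero.

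For assertion (iii), I first use Proposition \ref{prop:cll} to reduce to the case where $L=\one_\CL$ and $M=\one_\CM$: that proposition identifies $\FZ(\CL,L)$ with $\FZ({}_L\CL_L,\one_{{}_L\CL_L})$ (and analogously for $\CM$), and precomposing and postcomposing $(\CX,x)$ with the inverse morphisms supplied by Proposition \ref{prop:cll} transports the question to a morphism between ``fusionized'' pairs without changing its image under $\FZ$. Decomposing $\CX$ into its indecomposable summands and invoking Corollary \ref{cor:center-sum}, I may further assume that $\CX$ is indecomposable as an $\CL$-$\CM$-bimodule and that $x$ generates $\CX$ as a left $\CU$-module, so that $\CX\simeq\CU_A$ canonically via $y\mapsto[x,y]_\CU$.

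Under these reductions, ${}_A\CU_A\simeq\Fun_\CU(\CU_A,\CU_A)^{\rev}\simeq\Fun_\CU(\CX,\CX)^{\rev}$, and the double centralizer theorem for faithful semisimple module categories (which is essentially the fully faithful part of Theorem \ref{thm:KZ-2}) identifies $\Fun_\CU(\CX,\CX)$ with $\CL\boxtimes\CM^{\rev}$. Hence ${}_A\CU_A\simeq\CL^{\rev}\boxtimes\CM$. Combined with Corollary \ref{cor:center-A}(3),
\[
Z(A)\simeq Z(\one_{{}_A\CU_A})\simeq Z(\one_{\CL^{\rev}\boxtimes\CM})\simeq Z(\one_{\CL^{\rev}})\boxtimes Z(\one_\CM)\simeq Z(\one_\CL)\boxtimes Z(\one_\CM)=Z(L)\boxtimes Z(M).
\]
Moreover the central functor $\phi_\CU$, which witnesses the closedness of $\CU$ from assertion (i), is precisely the braided equivalence $\overline{\FZ(\CL)}\boxtimes\FZ(\CM)\simeq\FZ(\CU)$ underlying the identification above.

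The main obstacle is the compatibility implicit in the preceding sentence: one must verify that the chain of equivalences identifying $Z(A)$ with $Z(L)\boxtimes Z(M)$ really matches, on the nose, the canonical $Z(L)$-$Z(M)$-bimodule structure on $A$ described just before Theorem \ref{thm:main_formula} (from which the algebra homomorphism $\phi_\CU(Z(L)\boxtimes Z(M))\to Z(A)$ is built). Verifying this amounts to tracking the half-braidings defined in Example \ref{exam:Fun_monoidal_bimodule} through the isomorphism $\CX\simeq\CU_A$ and the double-centralizer identification $\Fun_\CU(\CX,\CX)\simeq\CL\boxtimes\CM^{\rev}$. Once this compatibility is established, the display in the previous paragraph immediately gives that the canonical homomorphism is an isomorphism, completing the proof.
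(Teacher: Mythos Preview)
Your argument follows essentially the same strategy as the paper's proof: reduce via Proposition~\ref{prop:cll} to the case where the distinguished algebras are tensor units, identify $\CU_A$ with $\CX$, apply the double-centralizer theorem to compute ${}_A\CU_A$, and conclude via Corollary~\ref{cor:center-A}(3). The paper streamlines the first reduction by a folding trick---replacing $(\CL,L)$ by $(\bk,k)$ and $(\CM,M)$ by $(\CL^{\rev}\boxtimes\CM,L\boxtimes M)$---so that only one side needs Proposition~\ref{prop:cll}; your two-sided reduction works equally well. You are also right to flag the compatibility of the resulting isomorphism with the canonical bimodule structure as the point requiring care; the paper treats this as implicit in the naturality of Corollary~\ref{cor:center-A}(3).

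One step in your outline is not justified as stated: the appeal to Corollary~\ref{cor:center-sum} to reduce to $\CX$ indecomposable. That corollary decomposes the center of a direct sum of algebras in a \emph{fixed} ambient category, but passing from $\CX$ to a summand $\CX_i$ changes $\CU$ itself, and $[x,x]_\CU$ is not the direct sum of the $[x_i,x_i]_{\CU_i}$. Fortunately the step is unnecessary: once $\CL$ and $\CM$ are fusion, $\CL\boxtimes\CM^{\rev}$ is fusion, hence the double-centralizer identification $\Fun_\CU(\CX,\CX)\simeq\CL\boxtimes\CM^{\rev}$ already forces $\CX$ to be indecomposable as a $\CU$-module, so any nonzero $x$ generates and $\CX\simeq\CU_A$ holds without further reduction. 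Simply delete the sentence invoking Corollary~\ref{cor:center-sum} and the argument stands.
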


\begin{proof}
Let $(\CX,x):(\CL,L)\to(\CM,M)$ be a morphism in $\mfus_\bullet$. Since ${}_L\CX_M$ is a semisimple left module over $\CU:=\Fun_{\CL|\CM}(\CX,\CX)$, the algebra $[x,x]_\CU$ is separable. We need to show that $Z(L)\boxtimes Z(M)$ is the full center of $[x,x]_\CU$.

First, replacing $(\CL,L)$ by $(\bk,k)$ and $(\CM,M)$ by $(\CL^\rev\boxtimes\CM,L\boxtimes M)$ if necessary, we may assume $(\CL,L)=(\bk,k)$.
Then in view of Proposition \ref{prop:cll}, replacing $(\CM,M)$ by $\FZ({}_M\CM_M,M)$ and $(\CX,x)$ by $(\CX_M,x)$ if necessary, we may assume that $\CM$ is a fusion category and $M=\one_\CM$.
Finally, since $\CX$ is an indecomposable left $\CU$-module, we have $\CU_{[x,x]}\simeq\CX$, thus ${}_{[x,x]}\CU_{[x,x]} \simeq \CM$ by \cite[Theorem 3.27]{eo}. Therefore, $Z([x,x]_\CU) \simeq Z(\one_\CM)$ by Corollary \ref{cor:center-A}(3), as desired.
\end{proof}

\begin{lem} \label{lem:ff2}
The following induced map is bijective for fusion categories $\CL,\CM$
$$\phi:\Hom_{\mfus_\bullet}((\CL,\one_\CL),(\CM,\one_\CM)) \to \Hom_{\bfuscl_\bullet}(\FZ(\CL,\one_\CL),\FZ(\CM,\one_\CM)).$$
\end{lem}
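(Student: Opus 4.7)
The plan is to reduce to the unpointed version of the KZ theorem (Theorem \ref{thm:KZ-2}), which asserts that $\FZ\colon\mfus\to\bfuscl$ is fully faithful, and then handle the ``pointings'' separately. First I would unpack both Hom-sets. A morphism in $\mfus_\bullet$ is an equivalence class of $(\CX,x)$, where $\CX$ is a nonzero semisimple $\CL$-$\CM$-bimodule and $x\in\CX$ is a nonzero object (the bimodule structure of $x$ over $\one_\CL,\one_\CM$ is tautological). A morphism in $\bfuscl_\bullet$ is an equivalence class of $(\CU,U)$, where $\CU$ is a closed multi-fusion $\FZ(\CL)$-$\FZ(\CM)$-bimodule and $U$ is a closed separable $Z(\one_\CL)$-$Z(\one_\CM)$-bimodule algebra in $\CU$.

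By Theorem \ref{thm:KZ-2}, the assignment $\CX\mapsto\CU:=\Fun_{\CL|\CM}(\CX,\CX)$ is a bijection between equivalence classes of bimodules; moreover, for each fixed $\CX$ the functor induces a group isomorphism from the $\CL$-$\CM$-bimodule auto-equivalences of $\CX$ to the monoidal bimodule auto-equivalences of $\CU$ (acting on $\CU$ by conjugation). Hence it suffices to show that for each fixed $\CX$ (with $\CU$ as above), the assignment $x\mapsto[x,x]_\CU$ induces a bijection between isomorphism classes of nonzero objects of $\CX$ and isomorphism classes of closed separable $Z(\one_\CL)$-$Z(\one_\CM)$-bimodule algebras in $\CU$.

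The crucial input is that closedness forces simplicity. If $U$ is closed, then $Z(U)\simeq\phi_\CU(Z(\one_\CL)\boxtimes Z(\one_\CM))$ in $\FZ(\CU)\simeq\overline{\FZ(\CL)}\boxtimes\FZ(\CM)$, which is Lagrangian, and therefore simple as an algebra; combined with Corollary \ref{cor:center-sum} this forces $U$ itself to be simple. The same holds for $[x_0,x_0]_\CU$ for any nonzero $x_0\in\CX$ by the proof of Proposition \ref{prop:ff1}. Invoking the DMNO classification of Morita classes of simple separable algebras in a multi-fusion category via their Lagrangian full centers (\cite[Corollary 4.1]{dmno}), $U$ and $[x_0,x_0]_\CU$ must be Morita equivalent, so $\CU_U\simeq\CU_{[x_0,x_0]_\CU}\simeq\CX$ as left $\CU$-modules. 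The free rank-one $U$-module then transports to some nonzero $x_U\in\CX$ with $[x_U,x_U]_\CU\simeq U$ as bimodule algebras, furnishing the inverse and hence surjectivity.

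For the injectivity of $x\mapsto[x,x]_\CU$ up to bimodule auto-equivalences, I would use that an isomorphism $\alpha\colon[x,x]_\CU\simeq[x',x']_\CU$ of bimodule algebras induces a left $\CU$-module equivalence $\CU_{[x,x]_\CU}\simeq\CU_{[x',x']_\CU}$ sending free rank-one modules to each other; pre- and post-composing with the canonical equivalences $\CX\simeq\CU_{[x,x]_\CU}$, $y\mapsto[x,y]_\CU$, and $\CU_{[x',x']_\CU}\simeq\CX$ (inverse to $y\mapsto[x',y]_\CU$) then yields a bimodule auto-equivalence of $\CX$ carrying $x$ to $x'$. The step I anticipate as the main obstacle is verifying the Morita classification in the present multi-fusion setting, together with the careful bookkeeping needed to ensure that the Morita equivalence and all intermediate identifications respect the $Z(\one_\CL)$-$Z(\one_\CM)$-bimodule structures, so that the final isomorphism really represents the required equivalence class in $\bfuscl_\bullet$.
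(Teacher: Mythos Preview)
Your approach is genuinely different from the paper's. The paper folds to $\CL=\bk$ and then directly constructs an inverse $\psi:(\CU,U)\mapsto(\CU_U,U)$: using that $(\FZ(\CU),Z(U))$ is identified with $(\FZ(\CM),Z(\one_\CM))$, one gets a monoidal equivalence ${}_U\CU_U\simeq\FZ(\CU)_{Z(U)}\simeq\FZ(\CM)_{Z(\one_\CM)}\simeq\CM$, which \emph{endows} $\CU_U$ with a right $\CM$-module structure. The point is that this $\CM$-action is manufactured out of the given $Z(\one_\CM)$-bimodule structure on $U$, so the verification $\FZ^{(1)}(\CU_U,U)\simeq(\CU,U)$ is essentially tautological. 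Your route instead fixes $\CX$ first (via unpointed $\FZ$) and then looks for $x$ inside it; this forces you to reconcile two bimodule structures that were produced independently.

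There are two places where your argument, as written, does not close. First, the assertion that conjugation gives a group isomorphism $\Aut_{\CL|\CM}(\CX)\simeq\Aut_{\mathrm{mon.bimod.}}(\CU)$ is a $2$-categorical full-faithfulness statement; Theorem~\ref{thm:KZ-2} only gives a bijection on equivalence classes of $1$-morphisms, and the $2$-truncation claim in \S\ref{sec:d-3fun} is stated without proof. You need at least surjectivity of this conjugation map to reduce the injectivity step to the case $F=\id_\CU$. Second, and this is the obstacle you correctly flagged, the Morita argument only produces an \emph{algebra} isomorphism $[x_U,x_U]_\CU\simeq U$; the induced map on full centers is an automorphism $\alpha$ of the Lagrangian $\phi_\CU(Z(\one_\CL)\boxtimes Z(\one_\CM))$, and there is no reason for $\alpha$ to be the identity. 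Correcting $\alpha$ by an invertible bimodule is exactly Corollary~\ref{cor:pic=aut}, but that corollary is derived \emph{from} Theorem~\ref{thm:ff}, so invoking it here would be circular. (Note also that \cite[Corollary~4.1]{dmno} is stated for fusion categories, whereas your $\CU$ is only indecomposable multi-fusion; this can be repaired via Morita invariance, but it is another detail to supply.)

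What your decomposition would buy, if completed, is a cleaner separation of the ``category-level'' and ``object-level'' parts of the statement. What the paper's construction buys is that the bimodule compatibility is built in from the start: by defining the $\CM$-action on $\CU_U$ through ${}_U\CU_U\simeq\CM$, the closed bimodule structure on $U$ is used as \emph{input} rather than something to be matched after the fact.
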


\begin{proof}
By the folding trick, we may assume $\CL=\bk$. We need to construct an inverse to the map $\phi$.
Let $(\CU,U):\FZ(\bk,k)\to\FZ(\CM,\one_\CM)$ be a morphism in $\bfuscl_\bullet$ so that the pair $(\FZ(\CU),Z(U))$ can be identified with $(\FZ(\CM),Z(\one_\CM))$. Since $\FZ(\CU)_{Z(U)} \simeq {}_U\CU_U$ and $\FZ(\CM)_{Z(\one_\CM)} \simeq \CM$, ${}_U\CU_U$ can be identified with $\CM$, hence we obtain a morphism $(\CU_U,U):(\bk,k)\to(\CM,\one_\CM)$ in $\mfus_\bullet$.
It is clear that $\FZ^{(1)}(\CU_U,U)\simeq(\CU,U)$. Conversely, given a morphism $(\CX,x):(\bk,k)\to(\CM,\one_\CM)$ in $\mfus_\bullet$, we set $(\CU,U)=\FZ^{(1)}(\CX,x)$, i.e. $(\CU,U)=(\Fun_{\bk|\CM}(\CX,\CX),[x,x]_\CU)$. Since $\CX$ is an indecomposable left $\CU$-module, we have an equivalence $\CU_U\simeq\CX$ which maps $U$ to $x$. Thus $(\CU_U,U)$ represents the same morphism as $(\CX,x)$.
\end{proof}

\begin{cor} \label{cor:ff}
The subcategory $\bfuscl_\bullet$ is well-defined.
\end{cor}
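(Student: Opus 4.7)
The plan is to reduce the well-definedness of $\bfuscl_\bullet$ as a subcategory of $\btc_\bullet$ to the already-known well-definedness of $\mfus_\bullet$, by transporting everything through the pointed Drinfeld center functor. The three essential inputs are already proved: Remark \ref{rem:ff} shows every object of $\bfuscl_\bullet$ has the form $\FZ(\CL,\one_\CL)$ for a fusion category $\CL$; Lemma \ref{lem:ff2} shows that on Hom-sets between such objects the map induced by $\FZ$ is a bijection onto $\Hom_{\bfuscl_\bullet}$; and Proposition \ref{prop:ff1} shows that $\FZ$ sends morphisms in $\mfus_\bullet$ into $\bfuscl_\bullet$. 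Combined with the functoriality of $\FZ$ (Theorem \ref{thm:Df-center}), these facts suffice.

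First I would verify closure under identities. Given an arbitrary object $(\CA,A)\in\bfuscl_\bullet$, invoke Remark \ref{rem:ff} to choose a fusion category $\CL$ with $(\CA,A) = \FZ(\CL,\one_\CL)$. The identity endomorphism of $(\CL,\one_\CL)$ in $\mtc_\bullet$ is represented by $({}_\CL\CL_\CL,\one_\CL)$, which clearly lies in $\mfus_\bullet$ (it is a nonzero semisimple bimodule with a nonzero bimodule algebra). By Theorem \ref{thm:Df-center}, the image under $\FZ$ is the identity endomorphism of $(\CA,A)$ in $\btc_\bullet$, and Proposition \ref{prop:ff1} guarantees it lies in $\bfuscl_\bullet$.

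Next I would verify closure under composition. Take composable morphisms $(\CU,U)\colon(\CA,A)\to(\CB,B)$ and $(\CV,V)\colon(\CB,B)\to(\CC,C)$ in $\bfuscl_\bullet$. Using Remark \ref{rem:ff}, write $(\CA,A)=\FZ(\CL,\one_\CL)$, $(\CB,B)=\FZ(\CM,\one_\CM)$, $(\CC,C)=\FZ(\CN,\one_\CN)$ for fusion categories $\CL,\CM,\CN$. By the bijection in Lemma \ref{lem:ff2}, there exist morphisms $(\CX,x)\colon(\CL,\one_\CL)\to(\CM,\one_\CM)$ and $(\CY,y)\colon(\CM,\one_\CM)\to(\CN,\one_\CN)$ in $\mfus_\bullet$ with $\FZ(\CX,x)\simeq(\CU,U)$ and $\FZ(\CY,y)\simeq(\CV,V)$. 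Since $\mfus_\bullet$ is well-defined, $(\CX\boxtimes_\CM\CY,\,x\otimes_M y)$ lies in $\mfus_\bullet$; applying $\FZ$, using Theorem \ref{thm:Df-center} to identify its image with $(\CU\boxtimes_\CB\CV,\,U\otimes_B V)$, and invoking Proposition \ref{prop:ff1} once more places the composite in $\bfuscl_\bullet$.

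There is no serious obstacle: the substantive content was carried in Proposition \ref{prop:ff1} and Lemma \ref{lem:ff2}, where one had to compute that full centers of separable algebras remain Lagrangian and that every closed separable algebra $U\in\Alg(\CU)$ with the required bimodule structure is realized by some $(\CX,x)$. The only bookkeeping left is to be mildly careful that the isomorphisms $\FZ(\CX,x)\simeq(\CU,U)$ afforded by Lemma \ref{lem:ff2} genuinely respect composition, which is immediate from the functoriality of $\FZ$ and the definition of equivalence of morphisms in both $\mtc_\bullet$ and $\btc_\bullet$.
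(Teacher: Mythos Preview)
Your proposal is correct and follows essentially the same approach as the paper: the paper's proof is a one-liner citing Remark \ref{rem:ff} and Lemma \ref{lem:ff2}, while you have simply unpacked the argument in full, making explicit the roles of Proposition \ref{prop:ff1} (to land in $\bfuscl_\bullet$) and the functoriality from Theorem \ref{thm:Df-center} (to identify $\FZ$ of a composite with the composite of the images).
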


\begin{proof}
According to Remark \ref{rem:ff} and Lemma \ref{lem:ff2}, the class of morphisms in $\bfuscl_\bullet$ contains identity morphisms and is closed under composition.
\end{proof}

The following theorem generalize Corollary \ref{cor:KZ-3}.

\begin{thm} \label{thm:ff}
The pointed Drinfeld center functor $\FZ:\mtc_\bullet\to\btc_\bullet$ restricts to a symmetric monoidal equivalence $\mfus_\bullet \simeq \bfuscl_\bullet$.
\end{thm}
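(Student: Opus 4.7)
The plan is to assemble the functor's equivalence property from the ingredients already in place: the well-definedness of $\FZ$ on $\mfus_\bullet$ was established in the proposition preceding this one together with Proposition~\ref{prop:ff1}, symmetric monoidality is inherited from Theorem~\ref{thm:Df-center} (using $Z(L\boxtimes M)\simeq Z(L)\boxtimes Z(M)$ under the identification $\FZ(\CL\boxtimes\CM)\simeq\FZ(\CL)\boxtimes\FZ(\CM)$), and Corollary~\ref{cor:ff} shows the target is a genuine subcategory. What remains is to prove essential surjectivity and fully faithfulness on the restricted functor.

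For essential surjectivity, I would invoke Remark~\ref{rem:ff}: every object $(\CA,A)\in\bfuscl_\bullet$ satisfies $\CA\simeq\FZ(\CA_A)$ because $A$ is Lagrangian (by \cite[Corollary 4.1]{dmno}), and under this identification $(\CA,A)\simeq\FZ(\CA_A,\one_{\CA_A})$. Since $A$ being a Lagrangian algebra in a non-degenerate braided fusion category forces $\CA_A$ to be a fusion category, the pair $(\CA_A,\one_{\CA_A})$ indeed lies in $\mfus_\bullet$ and realizes the preimage.

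For fully faithfulness, the strategy is to reduce to Lemma~\ref{lem:ff2}, which already handles source and target of the form $(\CL,\one_\CL),(\CM,\one_\CM)$ with $\CL,\CM$ fusion. Given a general pair of objects $(\CL,L),(\CM,M)$ in $\mfus_\bullet$, I would use Proposition~\ref{prop:cll}(1) to replace them by the isomorphic objects $({}_L\CL_L,\one_{{}_L\CL_L})$ and $({}_M\CM_M,\one_{{}_M\CM_M})$. Here one must check that the invertible morphisms $(\CL_L,L):(\CL,L)\to({}_L\CL_L,L)$ and its inverse actually lie in $\mfus_\bullet$ (rather than merely in $\mtc_\bullet$): when $L$ is simple separable in the indecomposable multi-fusion $\CL$, the bimodule $\CL_L$ is semisimple, ${}_L\CL_L$ is an indecomposable fusion category (simplicity of $L$ as an $L$-$L$-bimodule gives a simple tensor unit; separability plus \cite{eo} give semisimplicity), and $L=\one_{{}_L\CL_L}$ is vacuously simple separable. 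Proposition~\ref{prop:cll}(2) then ensures that $\FZ$ maps this replacement to the identity in $\bfuscl_\bullet$ after canonical identification, so the Hom-bijection for $(\CL,L),(\CM,M)$ follows from the Hom-bijection in Lemma~\ref{lem:ff2} applied to $({}_L\CL_L,\one),({}_M\CM_M,\one)$.

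The main point requiring some care is the compatibility of the reduction via Proposition~\ref{prop:cll} with the $\mfus_\bullet$- and $\bfuscl_\bullet$-structures, specifically verifying that the canonical equivalences $\FZ(\CL)\simeq\FZ({}_L\CL_L)$ and $Z(L)\simeq Z(\one_{{}_L\CL_L})$ fit together into an isomorphism in $\bfuscl_\bullet$ and that this identification is precisely the image under $\FZ$ of the isomorphism $(\CL_L,L)$ in $\mfus_\bullet$. This is essentially a bookkeeping check, but it is what makes the reduction to Lemma~\ref{lem:ff2} legitimate; once it is verified, fully faithfulness for the general case follows immediately by the two-out-of-three property for bijections.
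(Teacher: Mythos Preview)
Your proposal is correct and follows essentially the same approach as the paper: essential surjectivity via Remark~\ref{rem:ff}, then fully faithfulness by using the invertible morphisms of Proposition~\ref{prop:cll} to reduce to the case $(\CL,\one_\CL),(\CM,\one_\CM)$ with $\CL,\CM$ fusion, where Lemma~\ref{lem:ff2} applies. The extra verifications you flag (that $(\CL_L,L)$ lies in $\mfus_\bullet$, and that $\FZ$ sends it to the identity after the canonical identification) are exactly what Proposition~\ref{prop:cll}(2) packages, and the paper treats them with the same brevity.
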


\begin{proof}
According to Remark \ref{rem:ff}, the restricted functor $\mfus_\bullet \to \bfuscl_\bullet$ is essentially surjective. It remains to show that $\FZ$ is fully faithful, i.e. $\FZ$ induces a bijection 
$$\Hom_{\mfus_\bullet}((\CL,L),(\CM,M)) \simeq \Hom_{\bfuscl_\bullet}(\FZ(\CL,L),\FZ(\CM,M))$$
for $(\CL,L),(\CM,M)\in\mfus_\bullet$.
Since $(\CL,L)\simeq({}_L\CL_L,L)$ in $\mfus_\bullet$ via the invertible morphism $(\CL_L,L)$ by Proposition \ref{prop:cll}, we may assume that $\CL$ is a fusion category and $L=\one_\CL$, and similarly for $(\CM,M)$. Then apply Lemma \ref{lem:ff2}.
\end{proof}

\subsection{Corollaries}

Assume $\chara k=0$. We derive some corollaries of Theorem \ref{thm:ff} in this subsection. The following corollary generalize the main result in \cite[Eq.\,(3.13)]{dkr1}.
\begin{cor} \label{cor:pic=aut}
Let $A$ be a simple separable algebra in a multi-fusion category $\CC$. We have a group isomorphism 
\begin{equation} \label{eqn:pic-aut}
\Pic(A) \simeq \Aut(Z(A))
\end{equation}
where $\Pic(A)$ is the group of isomorphism classes of invertible $A$-$A$-bimodules in $\CC$ and $\Aut(Z(A))$ is the automorphism group of the commutative algebra $Z(A)$.
\end{cor}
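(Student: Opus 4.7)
The plan is to deduce this corollary from Theorem \ref{thm:ff}. Under the symmetric monoidal equivalence $\FZ:\mfus_\bullet\simeq\bfuscl_\bullet$, the object $(\CC,A)$ is sent to $(\FZ(\CC),Z(A))$, yielding a group isomorphism
\[
\Aut_{\mfus_\bullet}((\CC,A)) \simeq \Aut_{\bfuscl_\bullet}((\FZ(\CC),Z(A))).
\]
I will identify $\Pic(A)$ and $\Aut(Z(A))$ with distinguished subgroups on the two sides, namely the classes whose underlying $\CC$-$\CC$-bimodule (respectively, monoidal $\FZ(\CC)$-$\FZ(\CC)$-bimodule) component is the regular one, and show that the above equivalence restricts to a bijection between them.

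On the $\mfus_\bullet$ side, for each $X\in\Pic(A)$ the pair $(\CC,X)$, with $\CC$ denoting the regular bimodule, is an automorphism of $(\CC,A)$ with inverse $(\CC,X^{-1})$. The definition of composition gives $(\CC,X)\circ(\CC,Y)=(\CC\boxtimes_\CC\CC,X\otimes_A Y)=(\CC,X\otimes_A Y)$, so $X\mapsto[(\CC,X)]$ is an injective group homomorphism from $\Pic(A)$ onto the subgroup of $\Aut_{\mfus_\bullet}((\CC,A))$ whose underlying bimodule component is trivial (invertibility then forces the pointing to be a simple invertible $A$-$A$-bimodule). Applying $\FZ$ sends $[(\CC,X)]$ to $[(\FZ(\CC),[X,X]_{\FZ(\CC)})]$ by Theorem \ref{thm:Df-center}, with composition preserved at the level of the algebra components via Theorem \ref{thm:main_formula}(2).

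On the $\bfuscl_\bullet$ side, each algebra automorphism $\sigma\in\Aut(Z(A))$ produces a closed $Z(A)$-$Z(A)$-bimodule algebra structure on $Z(A)$ — the right action standard, the left action twisted by $\sigma$ — denoted $Z(A)^\sigma$. Closedness in the sense of Definition \ref{def:closed-AB-bimodule} is immediate since the underlying algebra is already $Z(A)$. This yields an injective homomorphism $\sigma\mapsto[(\FZ(\CC),Z(A)^\sigma)]$ from $\Aut(Z(A))$ into $\Aut_{\bfuscl_\bullet}((\FZ(\CC),Z(A)))$ onto the subgroup of classes with trivial monoidal bimodule component; composition matches because $Z(A)^\sigma\otimes_{Z(A)}Z(A)^\tau\simeq Z(A)^{\sigma\tau}$.

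The main obstacle is showing that the equivalence $\FZ$ restricts to a bijection between these two distinguished subgroups. In one direction, for $X\in\Pic(A)$ one must verify that the bimodule algebra $[X,X]_{\FZ(\CC)}$ has its underlying algebra abstractly isomorphic to $Z(A)$, so that it is necessarily of the form $Z(A)^{\sigma_X}$ for a unique $\sigma_X\in\Aut(Z(A))$; conversely, every invertible closed $Z(A)$-$Z(A)$-bimodule algebra in $\FZ(\CC)$ must be shown to be of this twisted form. These are rank-one rigidity statements that follow from the interplay of invertibility with the closedness condition in Definition \ref{def:closed-AB-bimodule}, together with the fully faithfulness of $\FZ$ from Theorem \ref{thm:ff}. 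Once these identifications are made, the restriction of the equivalence to the two distinguished subgroups is automatic and gives the desired group isomorphism $\Pic(A)\simeq\Aut(Z(A))$.
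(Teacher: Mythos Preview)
Your overall strategy—deduce the result from the equivalence $\FZ:\mfus_\bullet\simeq\bfuscl_\bullet$ of Theorem~\ref{thm:ff}—is the same as the paper's. But two of the steps you rely on are not established, and one of them is actually false as stated.

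First, the injectivity claim $X\mapsto[(\CC,X)]$ from $\Pic(A)$ into $\Aut_{\mfus_\bullet}((\CC,A))$ fails. The equivalence relation on morphisms in $\mfus_\bullet$ allows one to act by any $\CC$-$\CC$-bimodule self-equivalence of $\CC$, and these are parametrized by invertible objects $z\in\FZ(\CC)$: one has $(\CC,X)\sim(\CC,z\otimes X)$ with the induced $A$-$A$-bimodule structure on $z\otimes X$. For $A=\one_\CC$ this identifies $X$ with $\bar z\otimes X$ for every invertible $\bar z$ in the image of the forgetful functor $\FZ(\CC)\to\CC$; e.g.\ for $\CC=\Vect_G$ with $G$ abelian, all $(\CC,k_g)$ become equivalent and your map is constant. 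So identifying $\Pic(A)$ with a \emph{subgroup} of $\Aut_{\mfus_\bullet}((\CC,A))$ does not work, and the same issue arises on the $\bfuscl_\bullet$ side.

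Second, the ``rank-one rigidity'' you invoke—that $[X,X]_{\FZ(\CC)}$ is abstractly isomorphic to $Z(A)$ and hence of the form $Z(A)^{\sigma_X}$—is precisely the heart of the proof and cannot be waved away. The paper handles this by first reducing to $A=\one_\CC$ via Proposition~\ref{prop:cll} (replacing $(\CC,A)$ by $({}_A\CC_A,\one)$), and then computing directly: for invertible $x$, the left $Z(\one_\CC)$-action on $[x,x]_{\FZ(\CC)}$ is induced by $[x,x]\simeq[\one_\CC,x^R\otimes x]\simeq[\one_\CC,\one_\CC]$ and the right action by $[x,x]\simeq[\one_\CC,x\otimes x^L]\simeq[\one_\CC,\one_\CC]$. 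This exhibits the $Z(\one_\CC)$-$Z(\one_\CC)$-bimodule $[x,x]$ explicitly as $Z(\one_\CC)$ twisted by an automorphism, giving a direct map $\Pic(\one_\CC)\to\Aut(Z(\one_\CC))$ that bypasses the quotient by bimodule self-equivalences. Only then is Theorem~\ref{thm:ff} invoked to conclude bijectivity. Without the reduction and this computation your argument remains a plan rather than a proof.
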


\begin{proof}
We may assume $\CC$ is indecomposable.
According to Proposition \ref{prop:cll}, we have $\Pic(A)\simeq\Pic(\one_{{}_A\CC_A})$ and $\Aut(Z(A))\simeq\Aut(Z(\one_{{}_A\CC_A}))$. Therefore, we may assume that $\CL$ is a fusion category and $A=\one_\CL$.
Let $x$ be an invertible $\one_\CC$-$\one_\CC$-bimodule in $\CC$, i.e. an invertible object of $\CC$. Then the left $Z(\one_\CC)$-action on $[x,x]_{\FZ(\CC)}$ is induced by $[x,x]_{\FZ(\CC)} \simeq [\one_\CC,x^R\otimes x]_{\FZ(\CC)} \simeq [\one_\CC,\one_\CC]_{\FZ(\CC)}$ and the right $Z(\one_\CC)$-action is induced by $[x,x]_{\FZ(\CC)} \simeq [\one_\CC,x\otimes x^L]_{\FZ(\CC)} \simeq [\one_\CC,\one_\CC]_{\FZ(\CC)}$. Hence the $Z(\one_\CC)$-$Z(\one_\CC)$-bimodule structure on $[x,x]_{\FZ(\CC)}$ induces an automorphism of $Z(\one_\CC)$.
Invoking Theorem \ref{thm:ff}, we see the map $\Pic(\one_\CC) \to \Aut(Z(\one_\CC))$ constructed above is a group isomorphism.
\end{proof}

\begin{rem}
When $\CC$ is a modular tensor category, above corollary was proved in \cite{dkr1}, and non-trivial examples of $\Pic(A)$ were also provided there. Another related earlier result is \cite[Theorem O]{frs2}.
\end{rem}

The following theorem reformulates and generalizes \cite[Theorem\,1.1]{kr1} and \cite[Proposition 4.8]{dmno}. 

\begin{cor} 
Two separable algebras $A$ and $B$ in a multi-fusion category $\CC$ are Morita equivalent if and only if they share the same full center, i.e. $Z(A) \simeq Z(B)$ as algebras in $\FZ(\CC)$. 
\end{cor}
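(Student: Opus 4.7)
The plan is to deduce both directions from Theorem~\ref{thm:ff}, which asserts that the pointed Drinfeld center functor $\FZ:\mfus_\bullet\to\bfuscl_\bullet$ is a symmetric monoidal equivalence, in particular fully faithful.

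For the forward direction, I would argue directly: if $A$ and $B$ are Morita equivalent in $\CC$ via an invertible $A$-$B$-bimodule $M$, then conjugation by $M$ (i.e.\ $y\mapsto M^{-1}\otimes_A y\otimes_A M$) defines a canonical monoidal $\FZ(\CC)$-$\FZ(\CC)$-bimodule equivalence ${}_A\CC_A\simeq{}_B\CC_B$. Combined with Corollary~\ref{cor:center-A}(3), which gives $Z(A)\simeq Z(\one_{{}_A\CC_A})$ and $Z(B)\simeq Z(\one_{{}_B\CC_B})$, this yields an algebra isomorphism $Z(A)\simeq Z(B)$ in $\FZ(\CC)$. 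This step is essentially Corollary~\ref{cor:morita-invariant} applied to the right center in $\FZ(\CC)$.

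For the backward direction, given an algebra isomorphism $\phi:Z(A)\simeq Z(B)$ in $\FZ(\CC)$, I would build an invertible morphism $(\FZ(\CC),U_\phi):(\FZ(\CC),Z(A))\to(\FZ(\CC),Z(B))$ in $\bfuscl_\bullet$: take $\CU=\FZ(\CC)$ as the identity monoidal bimodule, which is closed because $\FZ(\CC)$ is non-degenerate, and let $U_\phi=Z(A)$ be endowed with the $Z(A)$-$Z(B)$-bimodule structure coming from $\phi$ (left $Z(A)$-action by multiplication, right $Z(B)$-action by $\phi^{-1}$ composed with multiplication), the inverse morphism being built from $\phi^{-1}$. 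By the fully faithfulness of $\FZ$ (Theorem~\ref{thm:ff}), this pulls back to an invertible morphism $(\CX,x):(\CC,A)\to(\CC,B)$ in $\mfus_\bullet$. Since $\FZ$ sends the trivial $\CC$-$\CC$-bimodule $\CC$ to $\Fun_{\CC|\CC}(\CC,\CC)\simeq\FZ(\CC)$ (the trivial $\FZ(\CC)$-$\FZ(\CC)$-bimodule), and Theorem~\ref{thm:KZ-2} forces preimages of trivial bimodules to be trivial, one can take $\CX=\CC$; then $x\in{}_A\CC_B$ is an invertible $A$-$B$-bimodule, exhibiting the Morita equivalence.

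The main technical obstacle I expect is verifying that $U_\phi$ is a closed $Z(A)$-$Z(B)$-bimodule in $\Alg(\FZ(\CC))$ in the sense of Definition~\ref{def:closed-AB-bimodule}, so that $(\FZ(\CC),U_\phi)$ is a genuine morphism in $\bfuscl_\bullet$; concretely, that the canonical algebra map $Z(A)\boxtimes Z(B)\to Z(U_\phi)$ in $\FZ(\FZ(\CC))\simeq\overline{\FZ(\CC)}\boxtimes\FZ(\CC)$ is an isomorphism. I expect this to follow from the Lagrangian property of both $Z(A)$ and $Z(B)$ together with the fact that $\phi$ is an isomorphism, which morally identifies $U_\phi$ with the ``diagonal'' bimodule structure on a Lagrangian algebra; alternatively one can avoid this check by constructing the invertible morphism on the $\mfus_\bullet$ side first using the essential surjectivity built into the proof of Theorem~\ref{thm:ff} (cf.\ Lemma~\ref{lem:ff2}).
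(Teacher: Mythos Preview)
Your approach is essentially the paper's, only spelled out in more detail: the paper invokes Corollary~\ref{cor:morita-invariant} for the forward direction and, for the converse, uses that $\FZ$ is an equivalence to deduce $(\CC,A)\simeq(\CC,B)$ in $\mfus_\bullet$ from $(\FZ(\CC),Z(A))\simeq(\FZ(\CC),Z(B))$, exactly as you do by building $U_\phi$ explicitly. Your worry about closedness of $U_\phi$ dissolves once you note that the identity morphism $(\FZ(\CC),Z(A))$ on $(\FZ(\CC),Z(A))$ is closed (it lies in $\bfuscl_\bullet$ by Corollary~\ref{cor:ff}), and $U_\phi$ differs from it only by precomposing the right action with the algebra isomorphism $\phi^{-1}$; hence the structure map $Z(A)\boxtimes Z(B)\to Z(U_\phi)$ is the closed identity map composed with $\id\boxtimes\phi^{-1}$.

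There is, however, a genuine omission: Theorem~\ref{thm:ff} concerns $\mfus_\bullet$, whose objects are pairs $(\CL,L)$ with $\CL$ \emph{indecomposable} and $L$ \emph{simple} separable, whereas the corollary is stated for arbitrary separable algebras in a multi-fusion category. The paper inserts a reduction step you are missing: by Corollary~\ref{cor:center-sum} one has $Z(A\oplus A')\simeq Z(A)\oplus Z(A')$, so after decomposing $\CC$ into indecomposable summands and $A,B$ into their simple separable constituents, the hypothesis $Z(A)\simeq Z(B)$ matches the simple pieces bijectively and one may assume $\CC$ indecomposable and $A,B$ simple before invoking Theorem~\ref{thm:ff}. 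You should add this step.
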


\begin{proof}
One easy direction is immediate from Corollary \ref{cor:morita-invariant}. To see the other direction we suppose that $Z(A)\simeq Z(B)$. According to Corollary \ref{cor:center-sum}, we may assume without loss of generality that $\CC$ is indecomposable and that $A,B$ are simple. Then Theorem \ref{thm:ff} implies that $(\CC,A) \simeq (\CC,B)$ in $\mfus_\bullet$. That is, $A$ and $B$ are Morita equivalent.
\end{proof}

Restricting to the subcategory of $\mfus_\bullet$ consisting of objects in the form $(\CL,L)$ and morphisms in the form $(\CL,x)$, where $\CL$ is a fixed fusion category, 
the pointed Drinfeld center functor recovers the 1-truncation of the full center 2-functor defined in \cite[Theorem 7.10]{dkr}. Moreover, our results strengthen it as follows. 

\begin{cor}
The 1-truncation of the full center 2-functor defined in \cite[Theorem 7.10]{dkr} is faithful. 
\end{cor}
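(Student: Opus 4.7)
The plan is to deduce the corollary directly from Theorem \ref{thm:ff}, using the general principle that the restriction of a fully faithful functor to a (not necessarily full) subcategory is faithful. The paragraph immediately preceding the corollary already identifies the 1-truncation of \cite[Theorem 7.10]{dkr} with the restriction of $\FZ$ to a specific subcategory, so the argument is essentially formal once that identification is spelled out.

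First, I would spell out the identification more carefully. Fix a fusion category $\CL$ and let $\CS \subseteq \mfus_\bullet$ be the subcategory whose objects are $(\CL,L)$ for simple separable $L\in\CL$, and whose morphisms $(\CL,L)\to(\CL,M)$ are the equivalence classes of pairs $(\CL,x)$ with $x$ an $L$-$M$-bimodule in $\CL$ (regarded as an $\CL$-$\CL$-bimodule in the standard way), with composition given by the relative tensor product $x\otimes_M y$. Under $\FZ$, such a morphism is sent to $(\FZ(\CL),[x,x]_{\FZ(\CL)})$ via the canonical equivalence $\Fun_{\CL|\CL}(\CL,\CL)\simeq\FZ(\CL)$ together with Corollary \ref{cor:center-A}(1). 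This is precisely the 1-truncation of the full center 2-functor from \cite[Theorem 7.10]{dkr}.

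Given this identification, faithfulness is automatic. For any two objects $(\CL,L),(\CL,M)\in\CS$, the induced map on Hom-sets factors as
\begin{equation*}
\Hom_\CS\!\bigl((\CL,L),(\CL,M)\bigr) \hookrightarrow \Hom_{\mfus_\bullet}\!\bigl((\CL,L),(\CL,M)\bigr) \xrightarrow{\ \sim\ } \Hom_{\bfuscl_\bullet}\!\bigl(\FZ(\CL,L),\FZ(\CL,M)\bigr),
\end{equation*}
where the first arrow is the tautological inclusion of Hom-sets of a subcategory into Hom-sets of the ambient category, and the second is the bijection supplied by Theorem \ref{thm:ff}. The composite is injective, which is the definition of faithfulness of $\FZ|_{\CS}$.

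I do not anticipate any real obstacle: all the serious work has been done in Theorem \ref{thm:ff}, whose proof in turn depends on the fusion formula \eqref{eqn:main} of Theorem \ref{thm:main_formula} together with Lemma \ref{lem:ff2} and Proposition \ref{prop:ff1}. The only point that warrants care is the bookkeeping identification of the subcategory $\CS$ with the 1-truncation of \cite[Theorem 7.10]{dkr}, but this is already asserted in the paragraph preceding the corollary and amounts to unwinding the definitions. Note that $\FZ|_{\CS}$ is in general \emph{not} full, because morphisms in $\mfus_\bullet$ (and therefore in $\bfuscl_\bullet$) between the relevant objects can involve larger bimodule categories than $\CL$ itself; hence faithful, rather than fully faithful, is the optimal assertion at this level.
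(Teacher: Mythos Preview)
Your proposal is correct and matches the paper's intended argument: the corollary is stated without explicit proof, relying on the preceding paragraph's identification of the 1-truncation of \cite[Theorem 7.10]{dkr} with the restriction of $\FZ$ to the indicated subcategory, together with Theorem \ref{thm:ff}. Your write-up simply makes this formal deduction explicit, and your closing remark that fullness can fail (because morphisms in $\mfus_\bullet$ may involve bimodule categories other than $\CL$ itself) is a correct and worthwhile observation.
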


\section{3-functors and physical meanings} \label{sec:cft}

Assume $\chara k=0$. We sketch a construction that promotes the Drinfeld center 1-functor $\FZ:\mfus \to \bfuscl$ tautologically to a 3-functor and the pointed Drinfeld center 1-functor $\FZ:\mfus_\bullet \to \bfuscl_\bullet$ tautologically to a 3-equivalence. In this section, we use $n$d to represent a spatial dimension and $n$D to represent a spacetime dimension.

\subsection{Drinfeld center as a 3-functor} \label{sec:d-3fun}

First, we promote $\mfus$ and $\bfuscl$ to symmetric monoidal (weak) 3-categories $\widehat\mfus$ and $\widehat\bfuscl$ as follows.
\begin{enumerate}
\item The 3-category $\widehat\mfus$:
\begin{itemize}
\item An object is a an indecomposable multi-fusion category $\CL$.
\item A 1-morphism between two objects $\CL$ and $\CM$ is a nonzero semisimple $\CL$-$\CM$-bimodule  $\CX$. 
\item A 2-morphism between two 1-morphisms $\CX,\CX'$ is a bimodule functor $F:\CX\to\CX'$.
\item A 3-morphism between two 2-morphisms $F,F'$ is a bimodule natural transformation $\phi:F\to F'$.
\end{itemize}

\item The 3-category $\widehat\bfuscl$:
\begin{itemize}

\item An object is a non-degenerate braided fusion category $\CA$.

\item A 1-morphism between two objects $\CA,\CB$ is a closed multi-fusion $\CA$-$\CB$-bimodule $\CU$.  

\item A 2-morphism between two 1-morphisms $\CU,\CU': \CA\to\CB$ is a pair $(\CP,p)$ where 
$\CP$ is a semisimple left $\CU'\boxtimes_{\overline{\CA}\boxtimes\CB}\CU^\rev$-module (in particular, $\CP$ is a $\CU'$-$\CU$-bimodule) which is closed in the sense that the associated monoidal functor $\CU'\boxtimes_{\overline{\CA}\boxtimes\CB}\CU^\rev \to \Fun(\CP,\CP)$ is an equivalence,\footnote{One can show that $\CU'\boxtimes_{\overline{\CA}\boxtimes\CB}\CU^\rev$ is a matrix multi-fusion category, i.e. a multi-fusion category in the form $\Fun(\bk^n,\bk^n)$. Therefore, $\CP$ exists and is unique up to equivalence. Moreover, the category of left module functors of $\CP$ is equivalent to $\bk$.}
and $p\in\CP$ is a distinguished object.
The composition of $(\CP,p):\CU\to\CU'$ and $(\CQ,q):\CU'\to\CU''$ is given by $(\CQ\boxtimes_{\CU'}\CP,q\boxtimes_{\CU'}p)$.

\item A 3-morphism between two 2-morphisms $(\CP,p),(\CP',p'):\CU\to\CU'$ is an isomorphism class of pairs $(H,\phi)$ where
$H:\CP\to\CP'$ is a left module equivalence and $\phi:H(p)\to p'$ is a morphism in $\CP'$.
Two pairs $(H,\phi)$ and $(H',\phi')$ are isomorphic if there is a left module natural isomorphism $\eta:H\to H'$ such that $\phi=\phi'\circ\eta_p$.\footnote{By the previous footnote, $H$ is unique up to isomorphisms and $\eta$ is unique up to scalars.}

\end{itemize}
\end{enumerate}

Note that for a 2-morphism $(\CP,p)$ in $\widehat\bfuscl$ the first item $\CP$ is essentially redundant because it is unique up equivalence. Similarly, for a 3-morphism $(H,\phi)$ the first item $H$ is essentially redundant because it is unique up to isomorphism. Moreover, two pairs $(\id_\CP,\phi)$ and $(\id_\CP,\phi')$ represent the same 3-morphism $(\CP,p)\to(\CP,p')$ if and only if $\phi$ and $\phi'$ differ by an invertible scalar.
Therefore, the following assignment defines a 3-functor 
$$\widehat\FZ: \widehat\mfus \to \widehat\bfuscl$$
$$
\small 
\xymatrix{ \CL \ar@/^12pt/[rr]^{\CX} \ar@/_12pt/[rr]_{\CX'} & \Downarrow F \overset{\phi}{\Rrightarrow} F' \Downarrow 
& \CM}
\quad \longmapsto \quad
\xymatrix{ \FZ(\CL) \ar@/^16pt/[rr]^{\FZ^{(1)}(\CX):=\Fun_{\CL|\CM}(\CX,\CX)} \ar@/_16pt/[rr]_{\FZ^{(1)}(\CX'):=\Fun_{\CL|\CM}(\CX',\CX')} & \Downarrow \FZ^{(2)}(F) \overset{\FZ^{(3)}(\phi)}{\Rrightarrow} \FZ^{(2)}(F') \Downarrow 
& \FZ(\CM)}
$$
where $\FZ^{(2)}(F) = (\Fun_{\CL|\CM}(\CX,\CX'),F)$ and $\FZ^{(3)}(\phi) = (\id,\phi)$.

\begin{figure}[tb] 
\centering
    \includegraphics[scale=0.8]{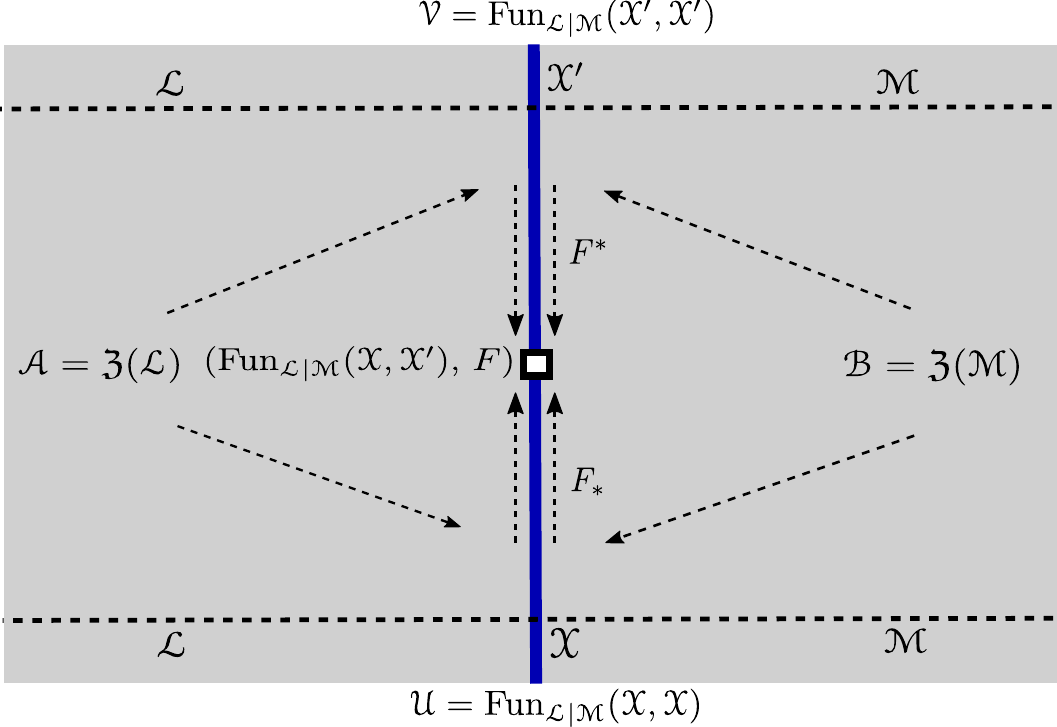}
    \caption{This figure illustrates the physical meaning of the image of the 2-truncated fully faithful functor $\widehat\FZ$, where $F\in \Fun_{\CL|\CM}(\CX,\CX')$.}
    \label{fig:drinfeld-center}
\end{figure}

The physical meaning of the image of $\widehat\FZ$ is illustrated by Figure\,\ref{fig:drinfeld-center} without drawing 3-morphisms. More precisely, in physical applications, $\CL$ and $\CM$ are unitary fusion categories; $\FZ(\CL)$ and $\FZ(\CM)$ are two unitary modular tensor categories describing two non-chiral 2d (the spatial dimension) topological orders; $\CU$ and $\CV$ are two 1d gapped domain walls between $\FZ(\CL)$ and $\FZ(\CM)$; the pair $(\Fun_{\CL|\CM}(\CX,\CX'), F)$, where $F\in \Fun_{\CL|\CM}(\CX,\CX')$, defines a 0d wall between $\CU$ and $\CV$; the dotted lines are functors, which describe how particle-like topological excitations in the 2d bulks (resp. 1d walls) are mapped into those on the 1d walls (resp. 0d wall); 3-morphisms $(\id,\phi)$ are instantons living on the time axis. 


We can truncate both the domain and the codomain of $\widehat\FZ$ to 2-categories \cite{bicategory} by taking the isomorphism classes of 2-morphisms in the 3-category as 2-morphisms in the truncated 2-category. Such obtained 2-truncation of $\widehat\FZ$ is fully faithful. Similarly, we have the 1-truncation of $\widehat\FZ$, which is precisely the Drinfeld center 1-functor $\FZ:\mfus \to \bfuscl$. Its physical meaning is the complete boundary-bulk relation of 2d (i.e. a spatial dimension) topological order with gapped boundaries as illustrated in Figure\,\ref{fig:bbr-gapped} \cite{kwz}. More precisely, 
$\CL,\CM,\CN$ are unitary fusion categories describing three 1d boundaries; $\CX$ is a non-zero finite unitary $\CL$-$\CM$-bimodule and $\CP$ is a non-zero finite unitary $\CM$-$\CN$-bimodule describing two 0d defect junctions; $\FZ(\CL), \FZ(\CM), \FZ(\CN)$ are unitary modular tensor categories describing three 2d non-chiral topological orders; $\FZ^{(1)}(\CX)=\Fun_{\CL|\CM}(\CX,\CX)$ and $\FZ^{(1)}(\CP)=\Fun_{\CM|\CN}(\CP,\CP)$ are unitary multi-fusion categories describing two potentially unstable 1d gapped domain walls \cite{kwz}. The functoriality of the Drinfeld center says that the horizontal fusion of $\CX$ and $\CP$ on the boundary is compatible with that of $\FZ^{(1)}(\CX)$ and $\FZ^{(1)}(\CP)$ in the bulk.

\begin{figure}[tb]
 \begin{picture}(150, 100)
   \put(120,10){\scalebox{2}{\includegraphics{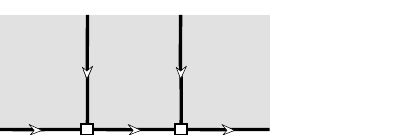}}}
   \put(80,-55){
     \setlength{\unitlength}{.75pt}\put(-18,-19){
     \put(95, 98)       {\scriptsize $\CL$}
     \put(170, 98)       {\scriptsize $\CM$}
     \put(245, 98)       {\scriptsize $\CN$}
     \put(135,97)      {\scriptsize $\CX$}
     \put(208,97)      {\scriptsize $\CP$}
     \put(85, 160)    {\scriptsize $\FZ(\CL)$}
     \put(165, 160)    {\scriptsize $\FZ(\CM)$}
     \put(245, 160)    {\scriptsize $\FZ(\CN)$}
     \put(130,205)     {\scriptsize $\FZ^{(1)}(\CX)$}
     \put(202,205)     {\scriptsize $\FZ^{(1)}(\CP)$}
     }\setlength{\unitlength}{1pt}}
  \end{picture}
\caption{The picture depicts the boundary-bulk relation of 2+1D topological orders. The arrows indicate the orientation of the boundaries or walls and the order of tensor product of topological excitations on the boundaries or walls. 
}
\label{fig:bbr-gapped}
\end{figure}

\void{
A fusion category $\CL$, as an object of $\widehat\mfus$, describes a potentially anomalous 1+1D gapped topological order. A nonzero semisimple $\CL$-$\CM$-bimodule $\CX$, i.e. a morphism in $\widehat\mfus$, describes a potentially anomalous gapped 0+1D wall. A bimodule functor $F:\CX\to\CX'$, i.e. a 2-morphism in $\widehat\mfus$, does not have a clear physical meaning, but its image does. 

A braided fusion category $\CA$, i,e. an object of $\widehat\bfuscl$, describes an anomaly-free 2+1D gapped topological order. A closed $\CA$-$\CB$-bimodule, i.e. a morphism in $\widehat\bfuscl$ describes an anomaly-free 1+1D gapped wall. A 2-morphism $(\CP,F)$ describes a 0+1D gapped defect. A 3-morphism $(\id_\CP,\phi):(\CP,F)\to(\CP,F')$ describes an instanton, i.e. a 0D defect.

The 3-functor $\widehat\FZ: \widehat\mfus \to \widehat\bfuscl$ maps a 1+1D gapped topological order describe by a fusion category $\CC$ to its bulk described by $\FZ(\CC)$, maps a gapped 0+1D wall described by an $\CL$-$\CM$-bimodule $\CX$ to a 1+1D gapped wall described by $\FZ^{(1)}(\CX)$. A transition of gapped 0+1D wall on the boundary described by a bimodule functor $F:\CX\to\CX'$ is obtained by fusing the 0+1D defect $\FZ^{(2)}(F)$ in the bulk onto the boundary. The instanton $\FZ^{(3)}(\phi):\FZ^{(2)}(F)\to\FZ^{(2)}(F')$ induces a bimodule natural transformation $\phi:F\to F'$
}

\subsection{Pointed Drinfeld center as a 3-equivalence} \label{sec:df-3equiv}

By elaborating the construction of the previous subsection, we promote $\mfus_\bullet$ and $\bfuscl_\bullet$ to symmetric monoidal 3-categories $\widehat\mfus_\bullet$ and $\widehat\bfuscl_\bullet$ as follows.
\begin{enumerate}
\item The 3-category $\widehat\mfus_\bullet$:
\begin{itemize}
\item An object is a pair $(\CL,L)$, where $\CL$ is an indecomposable multi-fusion category and $L$ is a simple separable algebra in $\CL$.
\item A morphism between two objects $(\CL,L)$ and $(\CM,M)$ is a pair $(\CX,x)$, where $\CX$ is a nonzero semisimple $\CL$-$\CM$-bimodule and $x$ is a nonzero $L$-$M$-bimodule in $\CX$. 
\item A 2-morphism between two morphisms $(\CX,x),(\CX',x') : (\CL,L)\to(\CM,M)$ is a pair $(F,f)$, where $F:\CX\to\CX'$ is an $\CL$-$\CM$-bimodule functor and $f:F(x)\to x'$ is an $L$-$M$-bimodule map.
\item A 3-morphism between two 2-morphisms $(F,f),(F',f'):(\CX,x)\to(\CX',x')$ is a bimodule natural transformation $\phi:F\to F'$ such that $f=f'\circ\phi_x$.
\end{itemize}

\item The 3-category $\widehat\bfuscl_\bullet$:
\begin{itemize}
\item An object is a pair $(\CA,A)$, where $\CA$ is a non-degenerate braided fusion category and $A$ is a Lagrangian algebra in $\CA$.
\item A morphism between two objects $(\CA,A)$ and $(\CB,B)$ is a pairs $(\CU,U)$, where $\CU$ is a closed multi-fusion $\CA$-$\CB$-bimodule and $U$ is a closed $A$-$B$-bimodule in the category of separable algebras in $\CU$.  
\item A 2-morphism between two morphisms $(\CU,U),(\CU',U') : (\CA,A)\to(\CB,B)$ is a quadruple $(\CP,p,w,f)$, where $(\CP,p):\CU\to\CU'$ is a 2-morphism in $\widehat\bfuscl$, 
$w$ is a left $U'\otimes_{A\boxtimes B}U$-module in $\CP$ (in particular, $w$ is a $U'$-$U$-bimodule), which is closed in the sense that the associated algebra homomorphism $U'\otimes_{A\boxtimes B}U \to [w,w]_{\CU'\boxtimes_{\overline{\CA}\boxtimes\CB}\CU^\rev}$ is an isomorphism,\footnote{Since $U'\otimes_{A\boxtimes B}U$ is a simple separable algebra in a matrix multi-fusion category, it has to be a matrix algebra \cite{kz4}. Therefore, $w$ exists and is unique up to isomorphism. Moreover, the algebra of left module maps of $w$ is isomorphic to $k$.}
and $f:p\to w$ is a morphism in $\CP$.
\item A 3-morphism between two 2-morphisms $(\CP,p,w,f),(\CP',p',w',f'):(\CU,U)\to(\CU',U')$ is an isomorphism class of triples $(H,\phi,t)$, where
$(H,\phi)$ represents a 3-morphism $(\CP,p)\to(\CP',p')$ in $\widehat\bfuscl$
and $t:H(w)\to w'$ is a left module isomorphism\footnote{By the previous footnote, $t$ is unique up to scalar.}
such that $t\circ H(f)=f'\circ \phi$.
Two triples $(H,\phi,t)$ and $(H',\phi',t')$ are isomorphic if there is a left module natural isomorphism $\eta:H\to H'$ such that $\phi=\phi'\circ\eta_p$ and $t=t'\circ\eta_w$.
\end{itemize}
\end{enumerate}

Note that for a 2-morphism $(\CP,p,w,f)$ in $\widehat\bfuscl_\bullet$ the items $\CP$ and $w$ are essentially redundant and for a 3-morphism $(H,\phi,t)$ the items $H$ and $t$ are essentially redundant. Moreover, two triples $(\id_\CP,\phi,\id_w)$ and $(\id_\CP,\phi',\id_w)$ represent the same 3-morphism $(\CP,p,w,f)\to(\CP,p',w,f')$ if and only if $\phi=\phi'$.
Therefore, the following assignment defines a 3-equivalence 
$$\widehat\FZ: \widehat\mfus_\bullet \to \widehat\bfuscl_\bullet$$
$$
\small 
\xymatrix{ (\CL,L) \ar@/^12pt/[rr]^{(\CX,x)} \ar@/_12pt/[rr]_{(\CX',x')} & \Downarrow (F,f) \overset{\phi}{\Rrightarrow} (F',f') \Downarrow 
& (\CM,M)}
\quad \longmapsto
$$
$$
\small
\xymatrix{ 
  (\FZ(\CL),Z(L)) \ar@/^16pt/[rr]^{(\FZ^{(1)}(\CX),Z^{(1)}(x):=[x,x])} \ar@/_16pt/[rr]_{(\FZ^{(1)}(\CX'),Z^{(1)}(x'):=[x',x'])} & \Downarrow (\FZ^{(2)}(F),Z^{(2)}(f)) \overset{\FZ^{(3)}(\phi)}{\Rrightarrow} (\FZ^{(2)}(F'),Z^{(2)}(f')) \Downarrow 
& (\FZ(\CM)},Z(M))
$$
where $Z^{(2)}(f) = ([x,x'],\tilde f)$, $\FZ^{(3)}(\phi) = (\id,\phi,\id)$, 
and $[x,x']$ is defined by the adjunction
$$\Hom_{\Fun_{\CL|\CM}(\CX,\CX')}(G,[x,x']) \simeq \Hom_{{}_L\CX'_M}(G(x),x'),$$
and $\tilde f:F\to[x,x']$ is the mate of $f:F(x)\to x'$.
In particular, the 1-truncation of $\widehat\FZ$ recovers the pointed Drinfeld center 1-functor $\FZ: \mfus_\bullet \to \bfuscl$. We will discuss its physical meaning in the next two subsections.

\void{
\medskip


The physical meaning of the image of the 2-truncation of the 3-equivalence $\widehat\FZ: \widehat\mfus_\bullet \to \widehat\bfuscl_\bullet$ is illustrated in Figure\,\ref{fig:2-category}. 
For an object of $\widehat\mfus_\bullet$ in the form $(\Mod_V,L)$, where $\Mod_V$ is the category of modules over a unitary rational VOA $V$, and $L$ is a simple separable algebra in $\Mod_V$ describing a 0+1D boundary CFT preserving the chiral symmetry $V$. A 1-morphism $(\CX,x):(\Mod_V,L)\to(\Mod_U,M)$ describes a 0D wall CFT between two boundary CFT's $L$ and $M$.

}

\void{
\newpage

This work is motivated by problems in RCFT's and the mathematical theory of gapless edges of 2+1D topological orders. In this section, we will explain them with some details. 

\bigskip

This work is motivated by the boundary-bulk relation in 1+1D rational conformal field theories (RCFT) \cite{fjfrs,kr1,kr2,dkr} and the mathematical theory of the gapless boundaries of 2+1D topological orders \cite{kz1,kz2,kz3}. In this introduction, however, we prefer to explaining directly our mathematical results and postponing the discussion of physical motivation to 
Section \ref{sec:cft}. For physically (resp. mathematically) oriented readers, we recommend to first read (resp. skip) Section \ref{sec:cft}. 

Throughout this work, $k$ is an algebraic closed field of characteristic zero; and $\bk$ denotes the category of finite-dimensional vector spaces over $k$; we use ``$n$D'' to denote the spacetime dimension and ``$n$d'' to denote the spatial dimension; and by a 2-category (resp. a 2-functor) we mean a weak 2-category or a bicategory (resp. a 2-functor) in the usual sense \cite{bicategory}. 

\medskip
Classically, it is known that the notion of center of an algebra is not functorial because an algebra homomorphism between two algebras does not induce an algebra homomorphism between their centers. It turns out that this naive observation can be misleading.

Motivated by the theory of 1+1D RCFT, it was shown in \cite{dkr} that the notion of center can be made functorial if we define the morphisms in the codomain category properly. Indeed, for two algebras $A, B$ in $\bk$, two $A$-$B$-bimodules $M,N$ in $\bk$ and a bimodule map $f: M \to N$, we consider the following assignment: 
\be \label{diag:center-functor-1}
Z: \,\, \xymatrix{ A \ar@/^12pt/[rr]^M \ar@/_12pt/[rr]_N & \Downarrow f & B}
\quad \longmapsto \quad
\xymatrix{ Z(A) \ar@/^16pt/[rr]^{Z^{(1)}(M):=\hom_{A|A}(M,M)} \ar@/_16pt/[rr]_{Z^{(1)}(N):=\hom_{A|A}(N,N)} & \Downarrow Z^{(2)}(f) & Z(B)}
\ee
where $Z(A)$ and $Z(B)$ are the center of $A$ and $B$, respectively, and $\hom_{A|A}(M,N)$ denotes the space of $A$-$A$-bimodule maps. More precisely, 
\bnu
\item The 1-morphism $\hom_{A|A}(M,M)$ in the codomain category is a $Z(A)$-$Z(B)$-bimodules in the category $\Alg(\bk)$ of algebras in $\bk$. 

\item The 2-morphism $Z^{(2)}(f)$ in the codomain category is given by the isomorphic class of the pair $(\hom_{A|A}(M,N), f)$, where $\hom_{A|A}(M,N)$ is viewed as a $Z(N)$-$Z(M)$-bimodule in the evident way. This pair can be equivalently understood as the following couple of morphisms (usually called a cospan):
\be \label{eq:cospan-1}
\hom_{A|A}(M,M) \xrightarrow{f\circ -} \hom_{A|A}(M,N) \xleftarrow{-\circ f} \hom_{A|A}(N,N). 
\ee
The pair $(\hom_{A|A}(M,N), f)$ could be viewed as an $E_0$-algebra in $\bk$ \cite{lurie}. Two pairs $(U,u)$ and $(V,v)$ are isomorphic if they are isomorphic as $E_0$-algebras, i.e. there exists an isomorphism $h: U \to V$ of $Z(N)$-$Z(M)$-bimodules such that $h(u)=v$. 
\enu
This assignment was conjectured to give a lax 2-functor. Indeed it clearly gives a true 2-functor for $A,B$ being simple separable algebras.

\medskip
Although it is a rather trivial result for simple separable algebras in $\bk$, it becomes highly non-trivial if we replace $\bk$ by a non-trivial fusion category $\CC$ over $k$. More precisely, for two simple separable algebras $A,B$ in $\CC$, two $A$-$B$-bimodules $M,N$ in $\CC$ and a bimodule map $f: M\to N$, we have the following assignment: 
\be \label{diag:center-functor-2}
Z: \,\, \xymatrix{ A \ar@/^12pt/[rr]^M \ar@/_12pt/[rr]_N & \Downarrow f & B}
\quad \longmapsto \quad
\xymatrix{ Z(A) \ar@/^16pt/[rr]^{Z^{(1)}(M):=[M,M]_{\FZ(\CC)}} \ar@/_16pt/[rr]_{Z^{(1)}(N):=[N,N]_{\FZ(\CC)}} & \Downarrow Z^{(2)}(f) & Z(B)}
\ee
where
\bnu
\item $Z(A)$ is the so-called full center of $A$ \cite{davydov} (see Definition \ref{def:left-right-full-centers}), a commutative algebra (i.e. an $E_2$-algebra) in the Drinfeld center $\FZ(\CC)$ of $\CC$; 
\item the 1-morphism $[M,M]_{\FZ(\CC)}$ is the internal hom in $\FZ(\CC)$ (which is an $E_1$-algebra), and should be viewed as a $Z(A)$-$Z(B)$-bimodule in the category $\Alg(\FZ(\CC))$ of algebras in $\FZ(\CC)$; 

\item the 2-morphism $Z^{(2)}(f)$ is the equivalence class of the pair $([M,N]_{\FZ(\CC)},f)$ (which is an $E_0$-algebra ), where the $A$-$B$-bimodule map $f: M\to N$ can be viewed as a distinguished ``$\one_{\FZ(\CC)}$-element'' in $[M,N]_{\FZ(\CC)}$ via the canonical isomorphism $\Hom_{\FZ(\CC)}(\one_{\FZ(\CC)}, [M,N]_{\FZ(\CC)}) \simeq \Hom_{{_A}\CC_B}(M,N)$, where ${_A}\CC_B$ denotes the category of $A$-$B$-bimodules in $\CC$. 


\enu
This assignment was proved to be a well-defined 2-functor in \cite[Theorem 7.10]{dkr}. We will call this 2-functor the full center 2-functor. By considering only the equivalence classes of 1-morphisms, we obtain a functor, which will be called the full center functor. 

\void{
\be \label{diag:center-functor-2}
Z: \,\, \xymatrix{ A \ar@/^12pt/[rr]^M \ar@/_12pt/[rr]_N & \Downarrow f & B}
\quad \longmapsto \quad
\raisebox{3.8em}{\xymatrix@R=2em{  
  & [M,M]_{\FZ(\CC)} \ar[d]^{[M,f]_{\FZ(\CC)}} & \\ Z(A) \ar@/^8pt/[ur] \ar@/_8pt/[rd] & [M,N]_{\FZ(\CC)} & Z(B) \ar@/_8pt/[ul] \ar@/^8pt/[dl]  \\
  & [N,N]_{\FZ(\CC)} \ar[u]^{[f,N]_{\FZ(\CC)}} 
  }},
\ee
}

When $\CC$ is the modular tensor category $\Mod_V$ of modules over a rational vertex operator algebra $V$ (VOA) \cite{huang-mtc2}, i.e. $\CC=\Mod_V$, the physical meanings of the various terms in (\ref{diag:center-functor-2}) are explained below.  
\bnu
\item The algebras $A$ and $B$ represent two 1D boundary CFT's. The bimodules $M$ and $N$ represent two 0D domain walls between $A$ and $B$. The $A$-$B$-bimodule map $f: M \to N$ does not have any physical meaning on the 1D boundary of 2D CFT, but its image $Z^{(2)}(f)$ does have a meaning in the bulk.  
\item The full center $Z(A)$ represents the 2D bulk CFT of $A$. The internal hom $[M,M]_{\FZ(\CC)}$ represents a 1D domain wall between $Z(A)$ and $Z(B)$, i.e. a 1D wall CFT. The internal hom $[M,N]_{\FZ(\CC)}$ represents a 0D domain wall between $[M,M]_{\FZ(\CC)}$ and $[N,N]_{\FZ(\CC)}$. Similar to (\ref{eq:cospan-1}), the 2-morphism $Z^{(2)}(f)=([M,N]_{\FZ(\CC)},f)$ can be equivalently described by a cospan:
\be \label{eq:cospan-2}
[M,M]_{\FZ(\CC)} \xrightarrow{[M,f]_{\FZ(\CC)}} [M,N]_{\FZ(\CC)} \xleftarrow{[f,N]_{\FZ(\CC)}} [N,N]_{\FZ(\CC)},
\ee
where the couple of morphisms $[M,f]_{\FZ(\CC)}$ and $[f,N]_{\FZ(\CC)}$ are defined by the universal property of the internal hom, and describe how local fields in two wall CFT's $[M,M]_{\FZ(\CC)}$ and $[N,N]_{\FZ(\CC)}$ are mapped into those in the 0D domain wall $[M,N]_{\FZ(\CC)}$, respectively, via the so-called operator product expansion (OPE). 
\enu
In this case, the full center functor provides a precise mathematical description of the boundary-bulk relation of RCFT's with the fixed chiral symmetry $V$. The image of the full center 2-functor provides a mathematical description of all RCFT's with topological defects of codimension 1 and 2 with the fixed chiral symmetry $V$. More explanation will be given in Section \ref{sec:cft}.

\medskip
Unfortunately, this boundary-bulk relation and the description of defects in RCFT's are very limited because the chiral symmetry $V$ is fixed. In this work, we would like to study 1D and 0D domain walls between 2D CFT's with different chiral symmetries, and generalize the boundary-bulk relation to these cases. 

Varying the chiral symmetry $V$ (or $\CC$) requires us to prove certain functoriality of Drinfeld center. This functoriality of Drinfeld center was established by two of the authors in \cite{kz1}. More precisely, for two fusion categories $\CL, \CM$ and a semisimple $\CL$-$\CM$-bimodule $\CX$, it was proved that the following assignment
\be \label{eq:drinfeld-center-functor-1}
( \CL \xrightarrow{{}_\CL\CX_\CM} \CM )
\quad \longmapsto \quad
( \FZ(\CL) \xrightarrow{\FZ^{(1)}(\CX):=\Fun_{\CL|\CM}(\CX,\CX)} \FZ(\CM) ),
\ee
where $\Fun_{\CL|\CM}(\CX,\CX)$ denotes the category of $\CL$-$\CM$-bimodule functors, gives a well-defined functor. By choosing a proper codomain category, this functor becomes a symmetric monoidal equivalence (see Corollary \ref{cor:KZ-3}). This Drinfeld center functor provides a precise and complete mathematical description of the boundary-bulk relation of 2+1D (anomaly-free) non-chiral topological orders. In particular, $\FZ^{(1)}(\CX)$ describes a gapped domain wall between two 2+1D topological orders defined by $\FZ(\CL)$ and $\FZ(\CM)$. 

Mathematically, results in \cite{kz1} suggest that one should be able to enhance this functor to a conjectural 3-functor as illustrated below: 
$$ 
\xymatrix{ \CL \ar@/^12pt/[rr]^{{}_\CL\CX_\CM} \ar@/_12pt/[rr]_{{}_\CL\CX_\CM'} & \Downarrow F \overset{\phi}{\Rrightarrow} G \Downarrow 
& \CM}
\quad \longmapsto \quad
\xymatrix{ \FZ(\CL) \ar@/^14pt/[rr]^{\FZ^{(1)}(\CX):=\Fun_{\CL|\CM}(\CX,\CX)} \ar@/_14pt/[rr]_{\FZ^{(1)}(\CX'):=\Fun_{\CL|\CM}(\CX',\CX')} & \Downarrow \FZ^{(2)}(F) \overset{\FZ^{(3)}(\phi)}{\Rrightarrow} \FZ^{(2)}(G) \Downarrow 
& \FZ(\CM)},
$$
where 
$F,G: \CX\to \CX'$ are $\CL$-$\CM$-bimodule functors and $\phi$ is a bimodule natural transformation; and the 2-morphism $\FZ^{(2)}(F)$ is a pair $(\Fun_{\CL|\CM}(\CX,\CX'),F)$ (i.e. an $E_0$-algebra) describing a 0+1D defect; the 3-morphism $\FZ^{(3)}(\phi)$ a homomorphism of $E_0$-algebra describing a 0D defect on the time axis (i.e. an instanton). 
One of the main results in \cite{kz1} provides a powerful formula (see (\ref{eq:isomorphism})) to compute the horizontal fusion of the 1-,2-,3-morphisms in the image.

\medskip
Our goal of generalizing boundary-bulk relation to RCFT's with different chiral symmetries demands us to combine the full center functor with the Drinfeld center functor to a so-called {\em pointed Drinfeld center functor}, which is defined as follows: for $x\in\CX$, 
\be \label{eq:drinfeld-center-functor-1}
( \CL \xrightarrow{({}_\CL\CX_\CM, \,\, x)} \CM )
\quad \longmapsto \quad
\Big( (\FZ(\CL),Z(\one_\CL)) \xrightarrow{(\FZ^{(1)}(\CX),\,\, Z^{(1)}(x):=[x,x]_{\FZ^{(1)}(\CX)})} (\FZ(\CM),Z(\one_\CM)) \Big), 
\ee
where $\one_\CL,\one_\CM$ are the tensor units of $\CL,\CM$, respectively. By properly choosing a codomain category, we prove that this is a well-defined symmetric monoidal equivalence (see Theorem \ref{thm:full-faithful}). This result provides a precise mathematical description of the boundary-bulk relation of 2D RCFT's with different (but still rational) chiral symmetries, and also summarizes and generalizes many earlier results in the literature. 

We will also briefly discuss how to generalize this functor to a conjectural 2-functor as illustrated by the following assignment: 
$$  
\small
\xymatrix{ (\CL,\one_\CL) \ar@/_12pt/[rr]_{({_\CL}\CY_\CM,\, y)} \ar@/^12pt/[rr]^{({_\CL}\CX_\CM,\, x)} & \Downarrow (F,f) & (\CM,\one_\CM)}
\longmapsto 
\xymatrix{ (\FZ(\CL),Z(\one_\CL)) \ar@/_16pt/[rr]_{(\FZ,Z)^{(1)}(\CY,y):=(\FZ^{(1)}(\CY),\,\ Z^{(1)}(y))} \ar@/^16pt/[rr]^{(\FZ,Z)^{(1)}(\CX,x):=(\FZ^{(1)}(\CX),\,\ Z^{(1)}(x))} & \Downarrow (\FZ,Z)^{(2)}(F,f) & (\FZ(\CM),Z(\one_\CM))}
$$
where $f:F(x)\to y$ is a morphism in $\CY$ and $(\FZ,Z)^{(2)}(F,f)$ will be defined in Section \ref{sec:2-functor}. The image of this conjectural 2-functor provides a precise mathematical description of 1+1D RCFT's with topological defects of all codimensions. 

We also provide a powerful formula in Theorem \ref{thm:main_formula} to compute the horizontal composition of 1-,2-morphisms in the codomain category. This formula plays a crucial role in the work, and solves an old open problem on how to compute the fusion of two 1D or 0D domain walls along a non-trivial 2D bulk RCFT. 
}

\subsection{Boundary-bulk relation of 1+1D RCFT's}
Defects in quantum field theories (QFT) or condensed matter systems, such as boundaries and domain walls, have been becoming increasingly important in recent years. Among all these defects, 1-codimensional boundaries are especially important due to its defining roles in various holographic phenomena; 1-codimensional domain walls are also important because they encode some information of the intrinsic structures (such as dualities) of the physical system (see for example \cite{savit,ffrs,dkr1,kk}). Fusing two 1-codimensional domain walls along a non-trivial bulk QFT is an example of dimensional reduction processes in QFT's. It is a natural to ask how to compute such a fusion (see for example \cite{clswy} and references therein). Computing dimensional reduction processes amount to computing factorization homology \cite{lurie,af} in mathematics. Therefore, this question sits in the heart of both physics and mathematics.

Precise computation needs the precise mathematical descriptions of wall QFT's and bulk QFT's. They are not known for generic QFT's, but are known for some TQFT's and 1+1D RCFT's. Fusion of domain walls in TQFT's was studied in many works (see for example \cite{fsv,kwz}), and was explicitly computed for 2+1D anomaly-free topological orders \cite{kz1,bbj2,ai}. For 1+1D RCFT's, some partial results were known \cite{dkr}. 

\medskip
We briefly recall some basic results on RCFT's (see for example \cite{geometry} for a review). For a given modular-invariant bulk CFT $A_\bulk$, there is a family of boundary CFT's that are compatible with $A_\bulk$. All boundary CFT's are required to satisfy the so-called $V$-invariant boundary conditions (see for example \cite{opfa}), where $V$ is a rational vertex operator algebra (VOA), i.e. the category $\Mod_V$ of $V$-modules is a modular tensor category \cite{huang-mtc2}, and is called the chiral symmetry of the CFT. In this case, we have the following results. 
\bnu
\item The bulk CFT $A_\bulk$ is given by a Lagrangian algebra in the modular tensor category $\FZ(\Mod_V)=\Mod_V\boxtimes\overline{\Mod_V}$ \cite{kr2}. 

\item A boundary CFT \cite{cardy} $A_\bdy$ is given by a simple special symmetric Frobenius algebra (SSSFA) in $\Mod_V$ \cite{fs,frs1,kr2}. It determines the bulk CFT $A_\bulk$ uniquely as its full center (see Definition \ref{def:left-right-full-centers}), i.e. $A_\bulk \simeq Z(A_\bdy)$ \cite{fjfrs,kr1,davydov}. Two boundary CFT's share the same bulk $A_\bulk$ if and only if they are Mortia equivalent as SSSFA's \cite{ffrs,kr1,davydov}. 

\item To each bulk CFT $A_\bulk$, there is a unique (up to equivalences) category $\CM_{A_\bulk}$ of boundary conditions. It is given by a unique (up to equivalences) indecomposable semisimple $\Mod_V$-module. For example, in the so-called Cardy case, $A_\bulk=Z(\one_{\Mod_V})$ (also called charge conjugate modular-invariant CFT), where $\one_{\Mod_V}=V$ is the tensor unit. In this case, $\CM_{A_\bulk}\simeq \Mod_V$ as left $\Mod_V$-modules. In general, if $A_\bulk=Z(A)$ for an SSSFA $A$ in $\Mod_V$, then $\CM_{A_\bulk} \simeq (\Mod_V)_A$ as left $\Mod_V$-modules. 
An object in $\CM_{A_\bulk}$ is called a boundary condition of $A_\bulk$. 

\item Given a category of boundary conditions $\CM$, one can determine all the other ingredients of a RCFT via internal homs. More precisely, for $x\in\CM$, the boundary CFT associated to the boundary condition $x$ is given by the internal hom $[x,x]$ in $\Mod_V$. All $[x,x]$ for $x\in\CM$ share the same bulk CFT given by the full center $Z([x,x])=\int_{x\in\CM}[x,x]$ (recall Lemma\,\ref{thm:center-end} (4)). In other words, all these boundary CFT's are Morita equivalent. Moreover, for $x,y\in\CM$, the 0D domain wall between two boundary CFT's $[x,x]$ and $[y,y]$ is given by the internal hom $[x,y]$. 

\enu

\medskip
For a fixed chiral symmetry $V$, it is still possible to have a few different bulk CFT's. They one-to-one correspond to the equivalence classes of Lagrangian algebras in $\FZ(\Mod_V)$. In this case, we consider 1D domain walls between different 2D bulk CFT's as depicted in Figure \ref{fig:cft-0}. Let $\CM_i$ be the category of boundary conditions of the bulk CFT's $A_\bulk^{(i)}$ for $i=1,2,3$. We have $A_\bulk^{(i)}:=[\id_{\CM_i},\id_{\CM_i}]_{\FZ(\Mod_V)}$, where $\id_{\CM_i}\in \Fun_{\Mod_V}(\CM_i,\CM_i)$ is the identity functor. It was shown in \cite{dkr} that 1D domain walls between two bulk CFT's $A_\bulk^{(1)}$ and $A_\bulk^{(2)}$ are given by the internal homs $[x,x]$ in $\FZ(\Mod_V)$ for $x\in \Fun_{\Mod_V}(\CM_1,\CM_2)$. All of these wall CFT's are Morita equivalent because, by the folding trick, they share the same bulk CFT given by $A_\bulk^{(1)} \boxtimes A_\bulk^{(2)}$, which should be viewed as a Lagrangian algebra in $Z(\Mod_V) \boxtimes \overline{Z(\Mod_V)}$. Moreover, for $x,y\in\Fun_{\Mod_V}(\CM_1,\CM_2)$, the 0D domain wall between two such wall CFT's $[x,x]$ and $[y,y]$ is precisely given by the internal hom $[x,y]$ in $\FZ(\Mod_V)$ \cite{dkr} as depicted in Figure \ref{fig:cft-0}. The horizontal fusion of two 0D walls $[x,y]$ and $[p,q]$ is given by the tensor product $[x,y]\otimes_{A_\bulk^{(2)}} [p,q]$ in $\FZ(\Mod_V)$ and can be computed by the following formula: 
\be \label{eq:dkr-formula}
[x,y]\otimes_{A_\bulk^{(2)}} [p,q] \simeq [p\circ x, q\circ y],
\ee
where $p\circ x, q\circ y\in\Fun_{\Mod_V}(\CM_1,\CM_3)$. 
Moreover, when $x=y$ and $p=q$, this isomorphism in (\ref{eq:dkr-formula}) is an algebra isomorphism. 

\begin{figure}
\centering
    \includegraphics[scale=1]{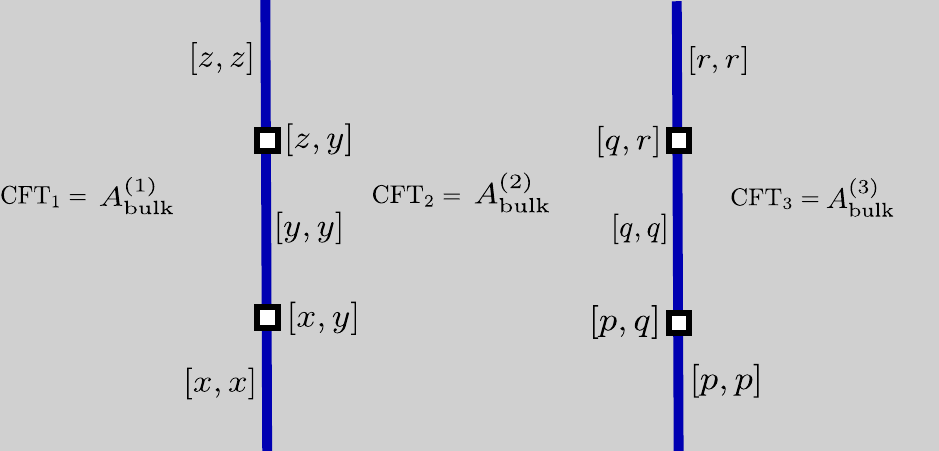}
    \caption{This figure depicts the 1+1D world sheet of three 1+1D bulk CFT's $A_\bulk^{(i)}$ separated by two 1D domain walls, each of which consists of three wall CFT's $[x,x],[y,y],[z,z]$ (resp. $[p,p],[q,q],[r,r]$) separated by two 0D domain walls $[x,y],[y,z]$ (resp. $[p,q],[q,r]$) for $x,y,z\in\Fun_{\Mod_V}(\CM_1,\CM_2), p,q,r\in \Fun_{\Mod_V}(\CM_2,\CM_3)$, where $\CM_i$ is the category of boundary conditions canonically associated to $A_\bulk^{(i)}$ for $i=1,2,3$. All internal homs live in $\FZ(\Mod_V)$. 
    } \label{fig:cft-0}
\end{figure}

\medskip
Note that the above results in \cite{dkr} are very limited because the chiral symmetry is fixed for all bulk CFT's. We would like to study the horizontal fusion of domain walls between different bulk CFT's equipped with different (but still rational) chiral symmetries. In order to see how to formulate the general situation mathematically, we reformulate the results of \cite{dkr} by Figure \ref{fig:cft-1} with new notations: 
$$
\CL:=\Fun_{\Mod_V}(\CM_1,\CM_1)^\rev, \quad \CM:=\Fun_{\Mod_V}(\CM_2,\CM_2)^\rev, \quad \CN:=\Fun_{\Mod_V}(\CM_3,\CM_3)^\rev
$$  
$$
\CX:=\Fun_{\Mod_V}(\CM_1,\CM_2), \,\,\, \CP:=\Fun_{\Mod_V}(\CM_2,\CM_3), 
\,\,\, \CU:=\Fun_{\CL|\CM}(\CX,\CX), \,\,\, \CV:=\Fun_{\CM|\CN}(\CP,\CP).
$$
By \cite{eo}, we have the following braided monoidal equivalences that form a commutative diagram: 
\be \label{eq:eo-isomorphisms}
\xymatrix@R=1.5em{ & & \FZ(\Mod_V) \ar[lld]_\simeq \ar[ld]^\simeq \ar[d]^\simeq \ar[rd]_\simeq \ar[rrd]^\simeq & & \\
\FZ(\CL) \ar[r]^\simeq & \CU & \FZ(\CM) \ar[l]_\simeq \ar[r]^\simeq & \CV & \FZ(\CN) \ar[l]_\simeq. 
}
\ee 
In particular, there are natural Morita equivalences among $\CL, \CM,\CN$ defined by the invertible $\CL$-$\CM$-bimodule $\CX$ and the $\CM$-$\CN$-bimodule $\CP$. The 1D domain walls $\CU, \CV$ are also invertible. Using the canonical equivalences in (\ref{eq:eo-isomorphisms}), 
\bnu
\item we can identify three bulk CFT's $A_\bulk^{(i)}$ with $Z(\one_\CL)=[\one_\CL,\one_\CL]_{\FZ(\CL)}$, $Z(\one_\CM)=[\one_\CM,\one_\CM]_{\FZ(\CM)}$, 
$Z(\one_\CN)=[\one_\CN,\one_\CN]_{\FZ(\CN)}$, respectively (see Corollary \ref{cor:center-A}); 
\item and identify the wall CFT's $[x,x],[p,p], \cdots$ and their 0D domain walls $[x,y], [p,q],\cdots$ in $\FZ(\Mod_V)$ with the same internal homs but living in $\CU$ or $\CV$ instead. \enu
As a consequence, we can relabeled Figure \ref{fig:cft-0} as Figure \ref{fig:cft-1}, which is ready to be generalized. 

\begin{figure}
\centering
    \includegraphics[scale=1]{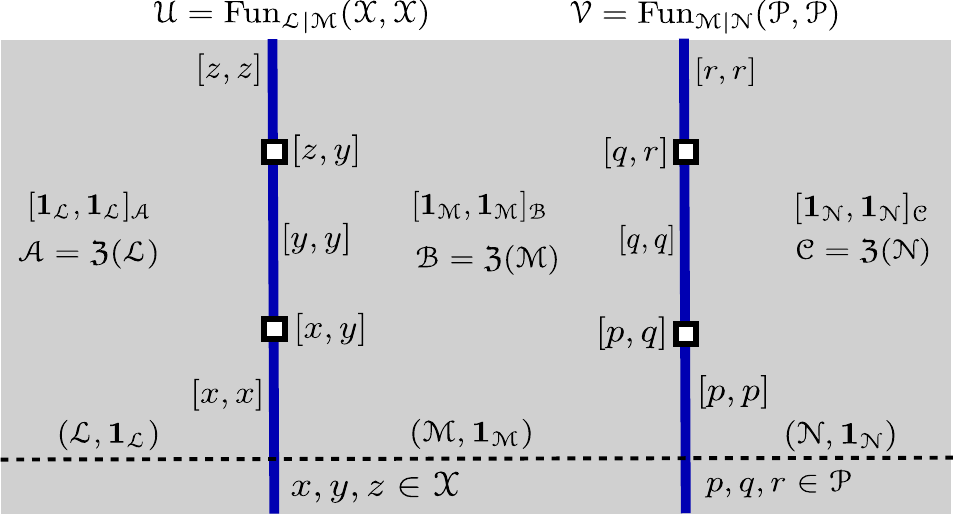}
    \caption{This figure depicts the 1+1D world sheet of three 1+1D bulk CFT's separated by two 1D domain walls (depicted as two vertical lines). 
    } \label{fig:cft-1}
\end{figure}

\medskip
It is important to note that $\CL,\CM,\CN$ are only fusion categories. To generalize, we will treat $\CL,\CM,\CN$ as three generic (not necessarily Morita equivalent) fusion categories, regardless whether they are related to any VOA's. It is because all the internal hom constructions and the key formula (\ref{eq:dkr-formula}) are purely categorical results, which hold if we replace $\Mod_V$ by any fusion categories \cite{dkr}. Accordingly, we can generalize $\CX$ (resp. $\CP$) to be any semisimple (not necessarily invertible) $\CL$-$\CM$-bimodule (resp. $\CM$-$\CN$-bimodule). As a consequence, we obtain a generic situation depicted in Figure \ref{fig:cft-1}, in which the labels $(\CL,\one_\CL),(\CM,\one_\CM),(\CN,\one_\CN),x,y,z\in\CX,p,q,r\in\CP$ are not physical observables on the world sheet, but the data in the domain of $\FZ$. We add them along a dotted line to remind you the functoriality of $\FZ$. Also note that a generic object $(\CL,A)$ in the domain category of $\FZ$ is isomorphic to $({}_A\CL_A, \one_{{}_A\CL_A})$. Therefore, we obtain the physical meaning of the pointed Drinfeld center functor as depicted as follows 
\be \label{fig:bb-relation-CFT}
\raisebox{-4em}{\begin{picture}(220, 90)
   \put(0,10){\scalebox{1.8}{\includegraphics{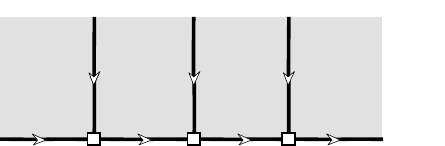}}}
   \put(-40,-55){
     \setlength{\unitlength}{.75pt}\put(-18,-19){
     \put(90, 98)       {\scriptsize $(\CL,L)$}
     \put(160, 98)       {\scriptsize $(\CM,M)$}
     \put(230, 98)       {\scriptsize $(\CN,N)$}
     \put(300, 98)       {\scriptsize $(\CO,O)$}
     \put(126,95)      {\scriptsize $(\CX,x)$}
     \put(195,95)      {\scriptsize $(\CY,y)$}
     \put(265,95)      {\scriptsize $(\CZ,z)$}
     \put(80, 160)    {\scriptsize $(\FZ(\CL),Z(L))$}
     \put(145, 160)    {\scriptsize $(\FZ(\CM),Z(M))$}
     \put(215, 160)    {\scriptsize $(\FZ(\CN),Z(N))$}
     \put(280, 160)    {\scriptsize $(\FZ(\CO),Z(O))$}
     \put(110,202)     {\scriptsize $(\FZ^{(1)}(\CX),[x,x])$}
     \put(185,202)     {\scriptsize $(\FZ^{(1)}(\CY),[y,y])$}
     \put(260,202)     {\scriptsize $(\FZ^{(1)}(\CZ),[z,z])$}
     }\setlength{\unitlength}{1pt}}
  \end{picture}}
\ee

More precisely, let $U,V$ be rational VOA's and $\CL={_A}(\Mod_U)_A$, $\CM={_B}(\Mod_V)_B$ for some SSSFA's $A\in\Mod_U, B\in\Mod_V$. Then a SSSFA\footnote{A simple separable algebra in a fusion category can be endowed with a structure of SSSFA.} $L\in\CL$ (resp. $M\in\CM$) defines a boundary CFT of a modular-invariant bulk CFT $Z(L) \in \FZ(\CL) \simeq \FZ(\Mod_U)$ (resp. $Z(M)\in \FZ(\CM)\simeq \FZ(\Mod_V)$). Consider a 1D domain wall between two bulk CFT's $Z(L)$ and $Z(M)$. The 1+1D non-chiral symmetries $U\otimes_\Cb\overline{U}$ and $V\otimes_\Cb\overline{V}$ on the two sides of the wall are rational full field algebras \cite{hk-ffa}. By flipping the chirality of the anti-chiral parts and the orientations, as it was explained in \cite[Section\,5.4]{kz5}, this 0+1D domain wall can be viewed as a 0+1D gapless wall between two 1+1D chiral gapless boundaries (of the trivial 2+1D topological order) with the same 1+1D chiral symmetry $U\otimes_\Cb V$. In the neighborhood of the wall, the 1+1D chiral symmetry $U\otimes_\Cb V$ breaks down to a smaller 1+1D chiral symmetry $T^{(2)}$, which is still assumed to be rational. Moreover, there is a 0+1D chiral symmetry $T^{(1)}$, which is defined on the 0+1D wall and is an SSSFA in ${}_{U\otimes_\Cb V}(\Mod_{T^{(2)}})_{U\otimes_\Cb V}$ \cite{kz5}. The relation among $T^{(1)},U,V,T^{(2)}$ is illustrated by the following commutative diagram: 
$$
\xymatrix@R=1.5em{
& T^{(2)} \ar@{^{(}->}[ld] \ar@{^{(}->}[d] \ar@{^{(}->}[rd] & \\
U\otimes_\Cb V \ar[r] & T^{(1)} & U\otimes_\Cb V\, . \ar[l] 
}
$$
It was explained in \cite{kz5} that the 1D wall CFT's are objects in ${}_{T^{(1)}}({}_{U\otimes_\Cb V}(\Mod_{T^{(2)}})_{U\otimes_\Cb V})_{T^{(1)}}$, which is automatically a closed multi-fusion $\FZ(\CL)$-$\FZ(\CM)$-bimodule. Note that above discussion includes both the case $U=V$ (e.g. a 1D non-chiral trivial wall) and the case $U=\Cb$ (i.e. a 1D chiral wall) as special cases. There is no essential difference between chiral 0+1D walls and non-chiral 0+1D walls \cite{kz5}. 

For any semisimple $\CL$-$\CM$-bimodule $\CX$, there always exist (not necessarily unique) a rational VOA $T^{(2)}$ and an SSSFA $T^{(1)}$ in ${}_{U\otimes_\Cb V}(\Mod_{T^{(2)}})_{U\otimes_\Cb V}$ such that 
$$
\FZ^{(1)}(\CX)\simeq {_{T^{(1)}}}({}_{U\otimes_\Cb V}(\Mod_{T^{(2)}})_{U\otimes_\Cb V})_{T^{(1)}}
$$ 
as closed multi-fusion $\FZ(\CL)$-$\FZ(\CM)$-bimodules. Under this equivalence, $[x,x]$,$[y,y]$,$[z,z]$,$[x,y]$ and $[y,z]$ in $\FZ^{(1)}(\CX)$ for $x,y,z\in\CX$ can all be realized as 1D or 0D domain walls between two 1+1D bulk CFT's $Z(\one_{\CL})$ and $Z(\one_{\CM})$ \cite{kz5}.

In summary, the pointed Drinfeld center functor $\FZ$ gives the precise and rather complete boundary-bulk relation of 1+1D rational CFT's as illustrated in (\ref{fig:bb-relation-CFT}). In particular, the Formula (\ref{eqn:main}) tells us how to compute the fusion of two 1D (or 0D) wall CFT's along a non-trivial bulk 1+1D CFT.

\medskip
In general, there might be different choices of $(T^{(2)},T^{(1)})$, which define different 1D domain walls between $Z(\one_{\CL})$ and $Z(\one_{\CM})$ \cite{kz5}. It turns out that any two 1D walls 
$$
(\CU=\FZ^{(1)}(\CX),\, [x,x]_\CU),\quad\quad\quad (\CU' =\FZ^{(1)}(\CX'),[x',x']_\CV)
$$ 
between $Z(\one_{\CL})$ and $Z(\one_{\CM})$ as illustrated in Figure\,\ref{fig:2-category} can be obtained by taking different choices of the pairs $(T^{(2)},T^{(1)})$. Moreover, the 0D domain wall between $\CU$ and $\CU'$ in Figure\,\ref{fig:2-category} is precisely a 2-morphism in the codomain of $\widehat{\FZ}$:
$$
(\CW,F,[x,x']_\CW,\tilde{f}): (\FZ^{(1)}(\CX),Z^{(1)}(x)) \to (\FZ^{(1)}(\CX'),Z^{(1)}(x')),
$$ 
where $\CW:= \Fun_{\CL|\CM}(\CX,\CX')$, $F\in\CW$ and $\tilde f:F\to[x,x']_\CW$ is the mate of $f:F(x)\to x'$. In general, it is possible that the 1+1D chiral symmetries on $\CU$ and $\CU'$ are different. But, for the purpose of realizing $\CW$ physically, it is enough to consider the case in which their 1+1D chiral symmetries are the same $T^{(2)}$. In this case, we have 
$$
\FZ^{(1)}(\CX)\simeq {_{T^{(1)}_\CU}}({}_{U\otimes_\Cb V}(\Mod_{T^{(2)}})_{U\otimes_\Cb V})_{T^{(1)}_\CU}, \quad\quad\quad 
\FZ^{(1)}(\CX')\simeq {_{T^{(1)}_{\CU'}}}({}_{U\otimes_\Cb V}(\Mod_{T^{(2)}})_{U\otimes_\Cb V})_{T^{(1)}_{\CU'}}.
$$ 
Then $\CW$ can be realized by ${}_{T^{(1)}_{\CU'}}({}_{U\otimes_\Cb V}(\Mod_{T^{(2)}})_{U\otimes_\Cb V})_{T^{(1)}_\CU}$. In fact, we have a couple of morphisms 
\begin{align}
\tilde{f}_r: F \odot Z^{(1)}(x) &\xrightarrow{\tilde{f}\odot \id} [x,x']_\CW \odot [x,x]_\CU \to [x,x']_\CW, \label{eq:f-r} \\
\tilde{f}_l: Z^{(1)}(x')\odot F &\xrightarrow{\id\odot \tilde{f}} [x',x']_\CV\odot [x,x']_\CW \to [x,x']_\CW, \label{eq:f-l}
\end{align}
where the second unlabeled morphisms are naturally induced from the universal property of internal homs. They describe how the chiral fields in the wall CFT's $Z^{(1)}(x)$ and $Z^{(1)}(x')$ are mapped into the chiral fields in the 0D wall via the so-called operator product expansion (OPE). Therefore, this picture illustrates the physical meaning of the image of the 2-truncation of $\widehat{\FZ}$ in 1+1D rational CFT's. 

\begin{figure}[tb] 
\centering
    \includegraphics[scale=0.9]{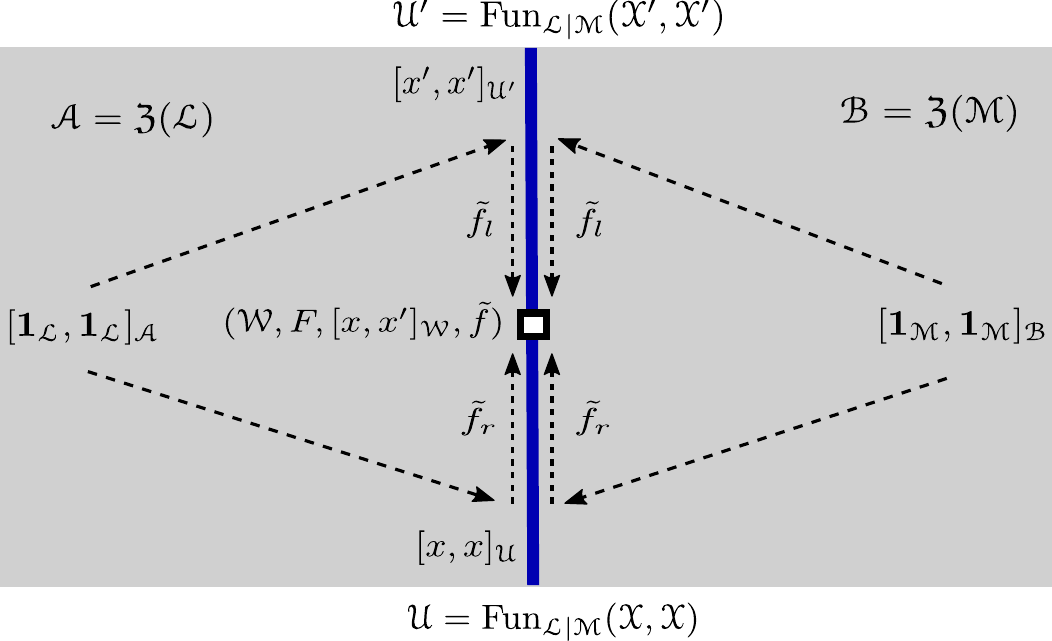}
    \caption{This figure illustrate the physical meaning of the image of 2-truncation of $\widehat\FZ$, where $\CW:= \Fun_{\CL|\CM}(\CX,\CX')$ and $F\in\CW$, and $f_r$ and $f_l$ are defined in (\ref{eq:f-r}) and (\ref{eq:f-l}), respectively.}
    \label{fig:2-category}
\end{figure}

\begin{rem}
Actually, by replacing $\CX$ in Figure\,\ref{fig:cft-1} by $\CX\oplus \CX'$, we can reduce the situation depicted in Figure\,\ref{fig:2-category} as a special case of the $\CU$-wall in Figure\,\ref{fig:cft-1}. 
\end{rem}

\begin{rem}
Using the physical meaning of $\FZ$, it is easy to see a physical proof of its faithfulness of $\FZ$ as illustrated by the following dimensional reduction process: 
$$
 \begin{picture}(100, 40)
   \put(-30,10){\scalebox{2.1}{\includegraphics{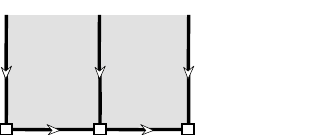}}}
   \put(-65,-55){
     \setlength{\unitlength}{.75pt}\put(-18,-19){
     \put(90, 98)       {\scriptsize $(\CL,L)$}
     \put(170, 98)       {\scriptsize $(\CM,M)$}
     \put(135,95)      {\scriptsize $(\CX,x)$}
     \put(213,95)      {\scriptsize $(\CM,M)$}
     \put(50,95)      {\scriptsize $(\CL,L)$}
     \put(85, 160)    {\scriptsize $(\FZ(\CL),Z(L))$}
     \put(155, 160)    {\scriptsize $(\FZ(\CM),Z(M))$}
     \put(50,208) {\scriptsize $(\CL,L)$}
     \put(130,208)     {\scriptsize $(\CU,[x,x]_\CU)$}
     \put(210,208)     {\scriptsize $(\CM,M)$}
     }\setlength{\unitlength}{1pt}}
  \end{picture}
\quad\quad\quad\quad\quad
 \begin{picture}(10, 100)
   \put(40,10){\scalebox{2.1}{\includegraphics{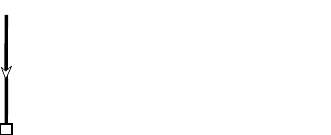}}}
   \put(5,-55){
     \setlength{\unitlength}{.75pt}\put(-18,-19){
     \put(-50, 150) {\scriptsize $\xrightarrow{\mbox{dimensional reduction}}$}
     \put(63,95)      {\scriptsize $(\CX,x)$}
     \put(53, 208)    {\scriptsize $\Fun_\bk(\CX,\CX)$}
     \put(75,180) {\scriptsize $[x,x]_{\Fun_\bk(\CX,\CX)}$}
     }\setlength{\unitlength}{1pt}}
  \end{picture}
$$
Note that $[x,x]_{\Fun_\bk(\CX,\CX)}$ determines $x\in\CX$ as the unique (up to equivalences) irreducible $[x,x]_{\Fun_\bk(\CX,\CX)}$-module in $\CX$, i.e. $\CX_{[x,x]}\simeq\bk$. This follows from the fact that $\CM_{[x,x]_{\Fun_\CC(\CM,\CM)}}\simeq\CC$ for a multi-tensor category $\CC$, a finite left $\CC$-module and $x\in\CM$. 
\end{rem}

\subsection{Topological Wick rotation and spatial fusion anomaly} \label{sec:fusion-anomaly}

\void{
Another evidence of the generalization of the results in \cite{dkr} to the situation depicted in Figure \ref{fig:cft-1} comes from the fact that the following assignment
$$
( \CL \xrightarrow{{}_\CL\CX_\CM} \CM )
\quad \longmapsto \quad
( \FZ(\CL) \xrightarrow{\FZ^{(1)}(\CX):=\Fun_{\CL|\CM}(\CX,\CX)} \FZ(\CM) )
$$
gives a well-defined symmetric monoidal equivalence (see Corollary \ref{cor:KZ-3}). This fact gives a precise mathematical description of the boundary-bulk relation for non-chiral 2+1D anomaly-free topological orders \cite{kwz}. More precisely, as depicted in Figure \ref{fig:bbr-gapped}, the anomalous 1d topological orders on the boundaries are described by fusion categories\footnote{Applications in topological orders demand these fusion categories to be unitary. But we will ignore this subtlety.} $\CL,\CM,\CN$; they are separated by 0d domain walls described by pairs $({}_\CL\CX_\CM,x), ({}_\CM\CP_\CN,p)$; the 2d bulk topological orders are described by unitary modular tensor categories given by $\FZ(\CL),\FZ(\CM),\FZ(\CN)$; the gapped domain wall between these 2d topological orders are given by unitary multi-fusion categories $\FZ^{(1)}(\CX), \FZ^{(1)}(\CP)$.

Since $\FZ(\CL),\FZ(\CM),\FZ(\CN),\FZ^{(1)}(\CX), \FZ^{(1)}(\CP)$ naturally appear in Figure  \ref{fig:cft-1}, it motives us to add a dotted line in Figure \ref{fig:cft-1} together with labels $\CL, x,y,z\in \CX, \CM, p,q,r\in\CP, \CN$ as if the same picture has a different interpretation. 
Indeed, $\FZ(\CL),\FZ(\CM),\FZ(\CN)$ can be reinterpreted as three fictional anomaly-free 2d topological orders, and the multi-fusion categories $\FZ^{(1)}(\CX), \FZ^{(1)}(\CP)$ can be reinterpreted as two fictional 1d gapped domain walls. Then one can think of $\CL,\CM,\CN$ as labels for three fictional 1d boundary topological orders and $x\in\CX,p\in\CP$ as labels for two fictional 0d domain walls on the fictional 1d boundary. 
}

Note that the physical meaning of Figure \ref{fig:cft-1} and Figure \ref{fig:bbr-gapped} are fundamentally different. On the one hand, Figure \ref{fig:cft-1} depicts the world sheet of 1+1D CFT's, where 1+1D is the spacetime dimension. On the other hand, Figure \ref{fig:bbr-gapped} depicts a physical configuration only in spatial dimension of 2d topological orders, where 2d is the spatial dimension. Naively, it seems that these two physical interpretations has no relation at all. Then it is natural to ask why the mathematical descriptions of Figure \ref{fig:cft-1} and Figure \ref{fig:bbr-gapped} are closely related? Is this just a meaningless and accidental coincidence in mathematics?

It turns out that there is a rather deep reason behind this coincidence. The mathematical theory of 1d gapless boundaries of 2d topological orders \cite{kz3,kz5} has revealed that one can naively "Wick rotating" a physical configuration of 2d topological orders in spatial dimension to obtain a 1+1D world sheet of a gapless boundary of a 2+1D topological order. This process is called a {\em topological Wick rotation} \cite{kz3}. In particular, if Figure \ref{fig:cft-1} is obtained from Figure \ref{fig:bbr-gapped} via a topological Wick rotation, then Figure \ref{fig:cft-1} should be viewed as the 1+1D world sheet of a 1d gapless boundary of the trivial 2d topological order, and the horizontal (resp. vertical) direction in Figure \ref{fig:cft-1} is the spatial (resp. temporal) direction.

\medskip
By the mathematical theory of 1d gapless boundaries of 2d topological orders in \cite{kz3,kz5}, the physical observables on 1+1D world sheet of a 1d boundary, viewed as a potentially anomalous 1d phase, form an enriched unitary fusion category. For example, when the left-most gray 2D region in Figure \ref{fig:cft-1} is viewed as the 1+1D world sheet of a 1d phase, which is anomaly-free in this case, all observables on it forms an $\CA$-enriched unitary fusion category ${^\CA}\CL$, whose underlying category is given by $\CL$ and the hom space $\hom_{{^\CA}\CL}(x,y)$ for $x,y\in \CL$ in the enriched category is given by the wall CFT $[x,y]_\CA$. Similarly, when the other two gray 2D regions in Figure \ref{fig:cft-1} are viewed as 1+1D world sheets of 1d phases, observables on them form enriched unitary fusion categories ${^\CB}\CM$ and ${^\CC}\CN$, respectively. The Drinfeld centers of ${^\CA}\CL$, ${^\CB}\CM$ and ${^\CC}\CN$ are all trivial (i.e. given by the category of finite dimension Hilbert spaces describing the trivial 2+1D bulk) \cite{kz2}. In the same spirit, when two blue lines in Figure \ref{fig:cft-1} are viewed as the 0+1D world line of two 0+1D phases, they are described by two enriched unitary categories ${^\CU}\CX$ and ${^\CV}\CP$, respectively. We have used the fact that $\CX$ (resp. $\CP$) is naturally a $\CU$-module (resp. $\CV$-module). Moreover, ${^\CU}\CX$ is naturally a ${^\CA}\CL$-${^\CB}\CM$-bimodule, and ${^\CV}\CP$ is naturally a ${^\CB}\CM$-${^\CC}\CN$-bimodule\footnote{Both bimodules are spatially invertible and define the spatial Morita equivalences among ${^\CA}\CL$, ${^\CB}\CM$, ${^\CC}\CN$ \cite{zheng,kz5}.}. 

It was shown in \cite{kz5} that the spatial (or horizontal) fusion of the two 0+1D phases ${^\CU}\CX$ and ${^\CV}\CP$ along the 1d phase ${^\CB}\CM$ is given by the relative tensor product over ${^\CB}\CM$, i.e.
\be \label{eq:fuse-enriched}
({^\CU}\CX) \boxtimes_{({^\CB}\CM)} ({^\CV}\CP) := {^{(\CU\boxtimes_\CB\CV)}}(\CX\boxtimes_\CM\CP). 
\ee
Physically, this fusion formula, together with the fact that ${^\CA}\CL, {^\CB}\CM, {^\CC}\CN$, regarded as 1d phases, and ${^\CU}\CX, {^\CV}\CP$, regarded as defects of codimension 1, are all anomaly-free, implies the vanishing of the spatial fusion anomaly (explained later). In particular, it means that there is a canonical isomorphism:
\be \label{eq:physics-formula}
[x,y]\otimes_{[\one_\CM,\one_\CM]_\CB} [p,q] \simeq [x\boxtimes_\CM p, y\boxtimes_\CM q],
\ee
where $[x,y]\otimes_{[\one_\CM,\one_\CM]_\CB} [p,q]$ denotes the horizontal fusion of 0D (or 1D if $x=y$) wall CFT's $[x,y]$ and $[p,q]$ in Figure \ref{fig:cft-1} and is defined by a coequalizer (recall (\ref{eq:coequalizer-B})). Mathematically, this formula (\ref{eq:physics-formula}) is proved rigorously in Theorem\,\ref{thm:main_formula} (1).

\begin{rem}
Note that the formula Eq.\,(\ref{eq:physics-formula}) can be applied to the general situations illustrated in Figure \ref{fig:2-category} because $[x,x]_\CU, [x,y]_\CW,[y,y]_\CV$ are just $[x,x]_\CO,[x,y]_\CO,[y,y]_\CO$, respectively, for $\CO:=\Fun_{\CL|\CM}(\CX \oplus \CY, \CX \oplus \CY)$. 
\end{rem}

We want to emphasize that the anomaly-free condition, i.e. Figure \ref{fig:cft-1} depicts the world sheet of a 1d boundary of the {\em trivial} 2d topological order, is crucial for the validity of the formula (\ref{eq:physics-formula}). In more general situations that appear in the mathematical theory of gapless boundaries, it is physically meaningful to replace the condition $\CA=\FZ(\CL),\CB=\FZ(\CM),\CC=\FZ(\CN)$ in Figure \ref{fig:cft-1} 
by the following weaker one: 
\begin{itemize}
\item[$(\star)$] $\CA,\CB,\CC$ are unitary modular tensor categories (also assume that $\CL,\CM,\CN$ are unitary fusion categories), there exist three unitary braided monoidal functors: 
\be \label{eq:anomalous-condition}
\CA \to \FZ(\CL), \quad\quad \CB \to \FZ(\CM), \quad\quad \CC \to \FZ(\CN),
\ee
which are potentially not equivalences. 
\end{itemize}
This replacement endows Figure \ref{fig:cft-1} with a similar physics meaning as the 1+1D world sheets of three different 1d boundaries (still described by ${^\CA}\CL, {^\CB}\CM, {^\CC}\CN$) of three potentially {\em non-trivial} 2+1D bulk topological orders. In this situation, we still have a natural morphism
$$
f: [x,y]\otimes_{[\one_\CM,\one_\CM]_\CB} [p,q] \to [x\boxtimes_\CM p, y\boxtimes_\CM q],
$$
which fails to be an isomorphism in general. Many examples of this failure are provided in \cite{kz3,kz5}. 

Fortunately and interestingly, the fusion formula (\ref{eq:fuse-enriched}) still holds in general situations. This is due to the so-called {\em Principle of Universality at a RG fixed point} proposed in \cite[Section\,6.3]{kz3}. It means that the direct spatial fusion given by $[x,y]\otimes_{[\one_\CM,\one_\CM]_\CB} [p,q]$ is not a RG stable and will flow to a fixed point given by $[x\boxtimes_\CM p, y\boxtimes_\CM q]$. In other words, after the spatial fusion, the system will flow to $[x\boxtimes_\CM p, y\boxtimes_\CM q]$ so that the formula (\ref{eq:fuse-enriched}) is preserved as the end of the RG flow. 

To some extent, the failure of $f$ being an isomorphism exactly catches the information of a non-trivial RG flow, and should be viewed as an indication of an anomaly, which was called {\em spatial fusion anomaly} and was introduced in \cite{kz3}. When ${^\CB}\CM$ is anomaly-free as a 1d phase (i.e. $\CB\simeq \FZ(\CM)$ \cite{kz2}), then $f$ is an isomorphism (proved in Theorem \ref{thm:main_formula}), i.e. the spatial fusion anomaly vanishes. Conversely, vanishing of the spatial fusion anomaly does not guarantee that ${^\CB}\CM$ is anomaly-free because there might be other anomalies. For example, the canonical chiral gapless boundary of a non-trivial chiral 2d topological order is anomalous as a 1d phase, but it has no spatial fusion anomaly as shown in \cite[Eq.\,(5.2)]{kz3}. For general chiral gapless boundaries, the spatial fusion anomaly does not vanish, but it vanishes for a subset of 0D and 1D wall CFT's on the 1+1D boundary. This subset has been identified in \cite[Remark\,6.3]{kz3}.

\appendix

\section{Davydov's definition of full center} \label{sec:davydov} 

Let $\CM$ be a monoidal category and $A$ an algebra in $\CM$. In \cite{davydov}, the full center $Z$ of $A$ is defined to be a pair $(Z,e)$, where $Z$ is an object in $\FZ(\CM)$ and $e: Z \to A$ is a morphism in $\CM$, such that it is terminal among all pairs $(X, f)$, where $X \in \FZ(\CM)$ and $f: X \to A$ is a morphism in $\CM$ such that the following diagram commutes:
\begin{align}\label{com_diag:X_A_comm}
    \xymatrix{
        X \otimes A \ar[r]^-{f\otimes \id_A} 
        \ar[d]_{\beta_{X,\, A}} & A \otimes A \ar[rd]^{m_A} & \\
        A\otimes X \ar[r]^-{\id_A\otimes f} & A\otimes A \ar[r]^{m_A} & A\, .
    }
\end{align}
It is known that $Z$ has a unique structure of an algebra in $\FZ(\CM)$ such that $e: Z \to A$ is an algebra homomorphism in $\CM$ (see \cite[Proposition 4.1]{davydov}).

In this appendix, we show that the full center defined by Davydov satisfies the universal property of the full center stated in Definition \ref{def:left-right-full-centers} (see also diagram \mref{diag:universal-property}).

\begin{lem}\label{lem:dav_lem_1}
Let $X$ be an algebra in $\FZ(\CM)$. If $f:X \to A$ is an algebra homomorphism making Diagram \eqref{com_diag:X_A_comm} commutative, then the composition $m: A \otimes X \xrightarrow{\id_A \otimes f} A \otimes A \xrightarrow{m_A} A$ is a unital $X$-action on $A$.       
\end{lem}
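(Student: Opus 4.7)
The plan is to verify the three defining conditions of a unital $X$-action on $A$ in the sense of Definition \ref{def:left-right-full-centers}: (a) $m$ preserves units; (b) $m$ preserves multiplications, where $A \odot X = A \otimes X$ carries the algebra structure whose multiplication is
\[m_{A \otimes X} = (m_A \otimes m_X) \circ (\id_A \otimes \beta_{X,A} \otimes \id_X)\]
built from the half-braiding $\beta_{X,A}: X \otimes A \to A \otimes X$ underlying $X \in \FZ(\CM)$; and (c) the normalization $m \circ (\id_A \otimes u_X) = \id_A$ under the unitor $A \otimes \one_\CM \simeq A$.

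Conditions (a) and (c) are immediate from direct calculation. For (a), using that $f$ preserves units (so $f \circ u_X = u_A$) and the unit axiom of $A$,
\[m \circ (u_A \otimes u_X) = m_A \circ (\id_A \otimes f) \circ (u_A \otimes u_X) = m_A \circ (u_A \otimes u_A) = u_A.\]
Similarly for (c),
\[m \circ (\id_A \otimes u_X) = m_A \circ (\id_A \otimes (f \circ u_X)) = m_A \circ (\id_A \otimes u_A) = \id_A.\]

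The main content is (b), the equality $m_A \circ (m \otimes m) = m \circ m_{A \otimes X}$ of morphisms $(A\otimes X) \otimes (A \otimes X) \to A$. I would unfold $m = m_A \circ (\id_A \otimes f)$ on both sides, invoke that $f$ preserves multiplication to rewrite $f \circ m_X = m_A \circ (f \otimes f)$ on the right, and then apply associativity of $m_A$ to rearrange the resulting fourfold product in $A$. After these rearrangements, both sides collapse to expressions differing only in whether an $f \otimes \id_A$ or $(\id_A \otimes f) \circ \beta_{X,A}$ appears on the middle two factors $X \otimes A$. Thus the equality reduces exactly to
\[m_A \circ (\id_A \otimes f) \circ \beta_{X,A} = m_A \circ (f \otimes \id_A) : X \otimes A \to A,\]
which is precisely the commutativity of Diagram \eqref{com_diag:X_A_comm}.

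The only real obstacle is the bookkeeping required to expand $m_{A \otimes X}$ through the half-braiding on the four tensor factors and to collect the composite into a form that isolates a single application of $\beta_{X,A}$ sandwiched between $f$ and $\id_A$. Once that manipulation is carried out, the hypothesis \eqref{com_diag:X_A_comm} finishes the argument, and so together with (a) and (c) the morphism $m$ is a unital $X$-action on $A$.
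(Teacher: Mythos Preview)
Your proposal is correct and follows essentially the same approach as the paper: both verify the normalization $m\circ(\id_A\otimes u_X)=\id_A$ directly from $f\circ u_X=u_A$, and both reduce the multiplicativity of $m$ to the commutativity of Diagram~\eqref{com_diag:X_A_comm} via associativity of $m_A$ and the algebra-homomorphism property of $f$. The paper simply states that the commutativity of \eqref{com_diag:X_A_comm} implies the needed diagram for $m$ being an algebra homomorphism, while you spell out the intermediate manipulation that isolates a single instance of $\beta_{X,A}$.
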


\begin{proof}
    Since $f$ is an algebra homomorphism, we have $u_A = f\circ u_X$. Thus the composition $A \simeq A\otimes\one_{\FZ(\CM)} \xrightarrow{\id_A \otimes u_X} A \otimes X \xrightarrow{m} A$ is $\id_A$. 
It is also easy to check that the commutativity of diagram \mref{com_diag:X_A_comm} implies that of the following one:
\begin{align*}
    \xymatrix{
        A \otimes X \otimes A \otimes X  \ar[r]^-{m\otimes m} 
        \ar[d]_{\id_A \otimes \beta_{X,A} \otimes \id_X} & A \otimes A \ar[rd]^{m_A} & \\
        A \otimes A \otimes X \otimes X  \ar[r]^-{m_A \otimes m_X} & A\otimes X \ar[r]^{m} & A\, .
    }
\end{align*}
Namely, $m$ is an algebra homomorphism hence is a unital $X$-action on $A$.
\end{proof}

\begin{lem}\label{lem:dav_lem_2}
    Let $X$ be an algebra in $\FZ(\CM)$. If $m: A \otimes X \to A$ is a unital $X$-action on $A$, then $f: X \simeq \one_\CM \otimes X \xrightarrow{u_A\otimes\id_X} A \otimes X \xrightarrow{m} A$ is an algebra homomorphism making Diagram \eqref{com_diag:X_A_comm} commutative.
\end{lem}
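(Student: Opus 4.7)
The plan is to verify directly that $f := m\circ(u_A\otimes\id_X)$ is a unit-preserving and multiplicative morphism $X\to A$ in $\CM$, and then to derive the commutativity of \eqref{com_diag:X_A_comm} from the key identity $m_A\circ(\id_A\otimes f)=m$. The single tool used throughout is the hypothesis that $m$ is simultaneously a unital action and an algebra homomorphism in $\CM$, where $A\otimes X$ carries the algebra structure
\[ u_{A\otimes X}=u_A\otimes u_X,\qquad m_{A\otimes X} = (m_A\otimes m_X)\circ(\id_A\otimes\beta_{X,A}\otimes\id_X). \]

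Preservation of the unit is immediate: $f\circ u_X = m\circ(u_A\otimes u_X) = m\circ(\id_A\otimes u_X)\circ u_A = u_A$, using the unitality of the action. For multiplicativity, I would compute $m_A\circ(f\otimes f)$ by unfolding $f$ twice and applying the identity $m_A\circ(m\otimes m) = m\circ m_{A\otimes X}$. The cross factor $\beta_{X,A}\circ(\id_X\otimes u_A)$ simplifies by naturality of the half-braiding (using $\beta_{X,\one_\CM}=\id$) to $u_A\otimes\id_X$, after which the two outer units collapse via $m_A\circ(u_A\otimes u_A)=u_A$, reducing the whole expression to $m\circ(u_A\otimes m_X) = f\circ m_X$.

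For the commutativity of \eqref{com_diag:X_A_comm}, I would establish two auxiliary identities from the same algebra-homomorphism condition on $m$, but with different unit insertions:
\[ m_A\circ(\id_A\otimes f) = m, \qquad m_A\circ(f\otimes\id_A) = m\circ\beta_{X,A}. \]
For the first, feeding the inputs $a_1\otimes u_X$ and $u_A\otimes x_2$ into $m_A\circ(m\otimes m) = m\circ m_{A\otimes X}$ kills the internal braiding, because naturality of $\beta$ together with $\beta_{\one_\CM,\one_\CM}=\id$ forces $\beta_{X,A}\circ(u_X\otimes u_A)=u_A\otimes u_X$; the unit laws then collapse the right-hand side to $m$. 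For the second, the symmetric insertion $u_A\otimes x_1$ and $a_2\otimes u_X$ leaves the braiding $\beta_{X,A}$ intact on the right-hand side. Composing the first identity with $\beta_{X,A}$ and comparing with the second yields precisely the commutativity of \eqref{com_diag:X_A_comm}.

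The main obstacle is bookkeeping of the half-braiding $\beta_{X,A}$: the whole argument hinges on the fact that $\beta$ slides the morphisms $u_A$ and $u_X$ past each other up to the coherence $\beta_{X,\one_\CM}=\id$. Once that naturality step is extracted, every remaining step is a direct application of the unit and associativity laws of $m_A$, $m_X$, and the action $m$, together with the fact (\emph{cf.} Remark \ref{rem:unique-unit}) that multiplicativity suffices to check $f$ is an algebra homomorphism.
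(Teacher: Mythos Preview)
Your proof is correct and matches the paper's approach: both derive the two key identities $m_A\circ(\id_A\otimes f)=m$ and $m_A\circ(f\otimes\id_A)=m\circ\beta_{X,A}$ by inserting units into the algebra-homomorphism equation $m_A\circ(m\otimes m)=m\circ m_{A\otimes X}$, and conclude Diagram~\eqref{com_diag:X_A_comm} from these. The only difference is that the paper dispatches the first part in one line (``since $m$ is an algebra homomorphism, $f$ is also an algebra homomorphism'', i.e.\ $f$ is the composite of $m$ with the algebra map $u_A\otimes\id_X$), whereas you unpack multiplicativity explicitly; your closing reference to Remark~\ref{rem:unique-unit} is slightly off since that remark concerns isomorphisms, but this is harmless as you verified unit preservation directly.
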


\begin{proof}
    Since $m$ is an algebra homomorphism, $f$ is also an algebra homomorphism. Note that
    \begin{align*}
        &m_A \circ (f \otimes \id_A) = m_A \circ(m \otimes m)\circ(u_A \otimes \id_X \otimes \id_A \otimes u_X) = m \circ \beta_{X, A},\\
    &m_A \circ (\id_A \otimes f)= m_A \circ (m \otimes m) \circ (\id_A \otimes u_X \otimes u_A \otimes \id_X) = m.
    \end{align*}
Therefore, Diagram \mref{com_diag:X_A_comm} is commutative.
\end{proof}

\begin{cor}
Let $X$ be an algebra in $\FZ(\CM)$. Giving a unital $X$-action on $A$ is equivalent to giving an algebra homomorphism $f:X \to A$ making Diagram \eqref{com_diag:X_A_comm} commutative
\end{cor}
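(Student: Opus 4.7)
The plan is to read the corollary off from the two immediately preceding lemmas. Lemma \ref{lem:dav_lem_1} supplies a map $\Phi$ sending an algebra homomorphism $f:X\to A$ satisfying the commutativity of Diagram \eqref{com_diag:X_A_comm} to the unital $X$-action $\Phi(f):=m_A\circ(\id_A\otimes f)$ on $A$. Lemma \ref{lem:dav_lem_2} supplies a map $\Psi$ in the reverse direction, sending a unital $X$-action $m$ to the algebra homomorphism $\Psi(m):=m\circ(u_A\otimes\id_X)$. All that remains is to verify that $\Phi$ and $\Psi$ are mutually inverse.

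I expect $\Psi\circ\Phi=\id$ to be immediate: the composite unwinds to $m_A\circ(\id_A\otimes f)\circ(u_A\otimes\id_X)=m_A\circ(u_A\otimes\id_A)\circ f=f$ by the left unit axiom for $A$. Thus every diagrammatically compatible homomorphism $f$ is recovered from its associated action.

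The other composite is where the real content sits. Unwinding gives $\Phi(\Psi(m))=m_A\circ(\id_A\otimes m)\circ(\id_A\otimes u_A\otimes\id_X)$, which I will identify with $m$ by precomposing the multiplicativity square for $m$ (available because $m$ is an algebra homomorphism $A\otimes X\to A$ in $\CM$, as established in the proof of Lemma \ref{lem:dav_lem_1}) with $\id_A\otimes u_X\otimes u_A\otimes\id_X$. On one side this rewriting produces exactly $\Phi(\Psi(m))$; on the other side the half-braiding $\beta_{X,A}$ appearing in the square evaluates trivially because one of its inputs has become a unit, and the unit axioms for $A$ and $X$ collapse the expression to $m$.

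No significant obstacle is anticipated: the corollary is a formal packaging of Lemmas \ref{lem:dav_lem_1} and \ref{lem:dav_lem_2}, and the invertibility check above reuses exactly the diagrammatic identities already produced in the proofs of those lemmas, with no new ingredient required.
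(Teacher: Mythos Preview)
Your proposal is correct and is exactly the natural completion the paper implicitly expects: the corollary is stated without proof immediately after Lemmas \ref{lem:dav_lem_1} and \ref{lem:dav_lem_2}, and your construction of mutually inverse maps $\Phi$ and $\Psi$ from those lemmas, together with the unit checks, is the intended argument. One small remark: the fact that a unital $X$-action $m$ is an algebra homomorphism is part of the \emph{definition} (a morphism in $\Alg(\CM)$), not something established in Lemma \ref{lem:dav_lem_1}; that lemma merely displays the explicit multiplicativity square you need, so your reference is harmless but slightly misplaced.
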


\begin{prop}
Let $(Z,e)$ be the full center of $A$ in the sense of \cite{davydov}. Then $(Z,m)$ is the right center of $A$ in $\FZ(\CM)$ where $m$ is the composition $A \otimes Z \xrightarrow{\id_A \otimes e} A \otimes A \xrightarrow{m_A} A$ . 
\end{prop}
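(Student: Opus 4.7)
The strategy is to exploit the bijection between algebra homomorphisms $f: X \to A$ making Diagram \eqref{com_diag:X_A_comm} commute and unital $X$-actions on $A$, which is essentially established by Lemmas \ref{lem:dav_lem_1} and \ref{lem:dav_lem_2}. The plan is to first observe that these two lemmas in fact supply mutually inverse constructions, then translate Davydov's universal property (stated in terms of the first set of data) directly into the universal property of Definition \ref{def:left-right-full-centers} (stated in terms of the second).

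Concretely, I would first check the inverse property: starting from an algebra homomorphism $f$ and setting $m := m_A\circ(\id_A\otimes f)$, the composition $m\circ(u_A\otimes\id_X)$ collapses to $f$ by unitality of $A$; conversely, starting from a unital $X$-action $m$ and setting $f := m\circ(u_A\otimes\id_X)$, one recovers $m_A\circ(\id_A\otimes f) = m$ using that $m$ is a bimodule map for the natural $A$-$X$-bimodule structure on $A$, a consequence of $m$ being an algebra homomorphism. Applying this to Davydov's full center $(Z,e)$, Lemma \ref{lem:dav_lem_1} gives immediately that $m = m_A\circ(\id_A\otimes e)$ is a unital $Z$-action on $A$, so $(Z,m)$ is a candidate for the right center of $A$ in $\FZ(\CM)$.

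For the universal property, take any pair $(X, m')$ with $X\in\Alg(\FZ(\CM))$ and $m':A\otimes X\to A$ a unital $X$-action. By Lemma \ref{lem:dav_lem_2}, the morphism $f' := m'\circ(u_A\otimes\id_X)$ is an algebra homomorphism making Diagram \eqref{com_diag:X_A_comm} commute. Davydov's universal property then produces a unique algebra homomorphism $g: X\to Z$ in $\FZ(\CM)$ with $f' = e\circ g$. I would then verify $m' = m\circ(\id_A\otimes g)$ by the short calculation
$$
m\circ(\id_A\otimes g) = m_A\circ(\id_A\otimes(e\circ g)) = m_A\circ(\id_A\otimes f') = m',
$$
where the last equality is the inverse check above applied to the action $m'$.

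Finally, uniqueness of $g$ as an algebra homomorphism $X\to Z$ satisfying $m' = m\circ(\id_A\otimes g)$ follows by composing both sides with $u_A\otimes\id_X$: one obtains $e\circ g = m\circ(u_A\otimes g) = m'\circ(u_A\otimes\id_X) = f'$, and Davydov's universal property forces $g$ to be the unique such map. I do not expect a serious obstacle here; the only place that requires mild care is the inverse-property computation, which relies on the bimodule compatibility of a unital action, but this is already implicit in the proof of Lemma \ref{lem:dav_lem_1}.
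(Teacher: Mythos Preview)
Your overall plan is sound and matches the paper's approach: use Lemma~\ref{lem:dav_lem_1} to see that $m$ is a unital $Z$-action, and for a competing pair $(X,m')$ use Lemma~\ref{lem:dav_lem_2} to produce $f'$ and then invoke Davydov's universal property. However, there is a real gap in the existence step.

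You write that ``Davydov's universal property then produces a unique \emph{algebra homomorphism} $g:X\to Z$ in $\FZ(\CM)$ with $f'=e\circ g$.'' This overstates what Davydov's definition gives. In the paper's formulation, $(Z,e)$ is terminal among pairs $(X,f)$ with $X\in\FZ(\CM)$ an \emph{object} and $f:X\to A$ a \emph{morphism} making the diagram commute; there is no algebra structure in sight. So the universal property hands you a unique morphism $g$ in $\FZ(\CM)$, nothing more. Since the right center of Definition~\ref{def:left-right-full-centers} is terminal among pairs with $X\in\Alg(\FZ(\CM))$ and the comparison map is required to be an \emph{algebra} homomorphism, you must still check that this $g$ respects unit and multiplication. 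Your uniqueness argument is fine (any algebra homomorphism satisfying $m'=m\circ(\id_A\otimes g)$ yields $e\circ g=f'$, so it is unique already as a morphism), but existence is incomplete.

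The paper fills exactly this gap: since $e$ and $f'$ are algebra homomorphisms, one has $e\circ u_Z=u_A=f'\circ u_X=e\circ g\circ u_X$ and similarly $e\circ m_Z\circ(g\otimes g)=e\circ g\circ m_X$; then the \emph{object-level} uniqueness in Davydov's universal property forces $u_Z=g\circ u_X$ and $m_Z\circ(g\otimes g)=g\circ m_X$. You should insert this argument (or an equivalent one) before concluding.
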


\begin{proof}
    By Lemma \ref{lem:dav_lem_1}, $m$ is a unital $Z$-action on $A$. Let $g: A \otimes X \to A$ be a unital $X$-action on $A$, where $X$ is an algebra in $\FZ(\CM)$. Then the composition $f: X \simeq \one_\CM \otimes X \xrightarrow{u_A} A \otimes X \xrightarrow{g} A$ is an algebra homomorphism making Diagram \eqref{com_diag:X_A_comm} commutative by Lemma \ref{lem:dav_lem_2}. By the universal property of $(Z,e)$, there exists a unique morphism $\underline{f}:X\to Z$ in $\FZ(\CM)$ such that $f = e \circ \underline{f}$. It remains to show that $\underline{f}$ is an algebra homomorphism. Since $e$ and $f$ are algebra homomorphism, we have
$$e\circ u_Z
= u_A
= f\circ u_X
= e\circ\underline{f}\circ u_X,$$
$$e \circ m_Z \circ (\underline{f} \otimes \underline{f}) 
= m_A \circ (e\otimes e) \circ (\underline{f} \otimes \underline{f})
= m_A \circ (f \otimes f)
= f \circ m_X 
= e \circ \underline{f} \circ m_X.$$
The universal property of $(Z,e)$ then implies that $u_Z = \underline{f}\circ u_X$ and $m_Z \circ (\underline{f} \otimes \underline{f}) = \underline{f} \circ m_X$. Namely, $\underline{f}$ is an algebra homomorphism.
\end{proof}

\void{
\section{Change-of-variable formula of (co-)ends} \label{sec:ends}

\begin{prop}  \label{prop:change-of-variable}
Let $H:\CC^\op\times\CD\to\CE$ be a functor. 
\begin{enumerate}
\item Let $G:\CC\to\CD$ be right adjoint to $F:\CD\to\CC$, we have
    \be \label{eq:change-variable-1}
    \int^{c\in\CC}H(c,G(c)) \simeq \int^{d\in\CD}H(F(d),d)
    \ee
    if both sides exist.
\item Let $G:\CD\to\CC$ be right adjoint functor $F:\CC\to\CD$, we have
    \be \label{eq:change-variable-2}
    \int_{c\in\CC}H(c,F(c)) \simeq \int_{d\in\CD}H(G(d),d)
    \ee
    if both sides exist. 
\end{enumerate}
\end{prop}
\begin{proof}
(1). Consider the following diagram: 
$$
\xymatrix@R=1.5em{
 H(c, G(c)) \ar[r]^-j \ar@<-0.7ex>[d]_h  & \int^{c\in \CC} H(c, G(c)) \ar@<-0.7ex>[d]_{\exists ! g}  \\
H(F(d), d) \ar[r]^-k \ar@<-0.7ex>[u]_i & \int^{d\in \CD} H(F(d), d), \ar@<-0.7ex>[u]_{\exists ! f}  
}
$$
where $j$ and $k$ are the canonical dinatural transformations. When $d=G(c)$, there is a canonical morphism $h$ induced by the counit $FG \to \id_\CC$. By the universal property of coend, there exists a unique morphism $g$ such that $k\circ h=g\circ j$. Similarly, when $c=F(d)$, there is a canonical morphism $i$ induced by the unit morphism $\id_\CD \to GF$, and there exists a unique morphism $f$ such that $j\circ i = f\circ k$. 

To show that $f$ and $g$ are the inverse of each other, it is enough to prove two identities: $f\circ g \circ j=j$ and $g\circ f \circ k=k$. The proofs of these two identities are the same. We will only prove $f\circ g \circ j=j$ below. It is enough to show $j=f\circ k \circ h$, which follows from the commutativity of the outer square in the following commutative diagram: 
$$
\xymatrix@C=0.6em@R=0.8em{
    H[c, G(c) ] \ar[rr] \ar[dd]_{j_c} \ar@<0.7ex>[dr]^m  & & H[FG(c), G(c)] \ar[dd] \\
& H[c, GFG(c)] \ar[dr] \ar@<0.7ex>[ul]^n & \\
    \int^{c\in \CC} H[c, G(c)]  & & H[FG(c), GFG(c)],  \ar[ll]_{j_{FG(c)}}
}
$$
where the morphisms $m$ and $n$ are induced by the duality maps such that $n\circ m=\id$, and the commutativity of the lower triangle is nothing but the dinatural property. 

(2). It follows from (1) by regarding the functor $H$ to be a functor $\CD^\op\times \CC \to \CE^\op$. 
\end{proof}
}

\void{
Let $\CC$ be a multi-tensor category and $\CM$ a finite left $\CC$-module. Then $\Fun_\bk(\CM,\CM)$ is also a multi-tensor category. There is a natural monoidal functor $\phi_\CM: \CC \to \Fun_\bk(\CM,\CM)$ defined by $c \mapsto c\odot -$. 
\begin{lem} \label{lem:ihom=end-1}
For $F,G \in \Fun_\bk(\CM,\CM)$, $[F,G]_\CC$ exists and there is a canonical isomorphism: 
$$
[F,G]_\CC\simeq \int_{x\in\CM} [F(x), G(x)]_\CC. 
$$
\end{lem}
\begin{proof}
Both $\CM$ and $\Fun_\bk(\CM,\CM)$ are enriched in $\CC$. Namely, $[F,G]$ and $[F(x),G(x)]_\CC$ are well-defined. 

For $x,y\in \CM$ and $z\in \CC$, we claim that the commutativities of the following two diagrams are equivalent: 
\be \label{diag:phi-to-phi-dagger}
\raisebox{2em}{\xymatrix@R=1.5em{z\odot F(x) \ar[r]^-{\phi_x} \ar[d]_{\id_z\odot F(f)}  & G(x) \ar[d]^{G(f)} \\
z\odot F(y) \ar[r]^-{\phi_y} & G(y)
}}
\quad\quad 
\Leftrightarrow 
\quad\quad
\raisebox{2em}{\xymatrix@R=1.5em{z \ar[rr]^{\phi_x^\dagger}  \ar[d]_{\phi_y^\dagger} & & [F(x), G(x)] \ar[d]^{[F(x), G(f)]} \\
[F(y),G(y)]  \ar[rr]^{[F(f),G(y)]} & & [F(x), G(y)]
}},
\ee
where $\phi_x^\dagger$ and $\phi_y^\dagger$ are the conjugates (or mates) of $\phi_x$ and $\phi_y$, respectively. This follows from the following two identities: 
$$
(G(f) \circ \phi_x)^\dagger = [F(x), G(f)] \circ \phi_x^\dagger, \quad\quad \text{and} \quad\quad 
(\phi_y \circ (\id \odot F(f)))^\dagger = [F(f),G(y)] \circ \phi_y^\dagger 
$$
which can be easily checked. It means that $\{ \phi_x \}_{x\in\CM}$ defines a natural transformation $z\odot F \to G$ if and only if $\{ \phi_x^\dagger \}_{x\in\CM}$ defines a dinatural transformation $z \xrightarrow{\bullet\bullet} [F(-), G(-)]$.

This equivalence can be restated in a more precise way. Consider the following two categories: 
\bnu
\item $\CC_1$ consists of pairs $(c, c\odot F \xrightarrow{\phi} G)$ with a morphism $h: (c,\phi) \to (c',\phi')$ defined by a morphism $h: c\to c'$ in $\CC$ such that $\phi = \phi' \circ (h\odot 1_F)$. 

\item $\CC_2$ consists of dinatural transformations $c\xrightarrow{\bullet\bullet} [F(-),G(-)]$ and a morphism between two dinatural transformations is a morphism $h: c\to c'$ in $\CC$ such that 
$$
\xymatrix{
c \ar[d]_h \ar[r]^-{\bullet\bullet} & [F(-),G(-)] \\
c' \ar[ru]^-{\bullet\bullet} & 
}
$$
\enu
Then above equivalence (\ref{diag:phi-to-phi-dagger}) simply says that $(c,\phi) \mapsto (c,\phi^\dagger)$ defines an isomorphism $(1,\dagger): \CC_1\to \CC_2$ of categories. Therefore, the isomorphism $(1,\dagger)$ must map a terminal object in $\CC_1$ to a terminal object in $\CC_2$. This proves the Lemma.  
\end{proof}

Now we consider an enhanced version of Lemma\,\ref{lem:ihom=end-1}. Both the Drinfeld center $\FZ(\CC)$ of $\CC$ and $\Fun_\CC(\CM,\CM)$ are finite rigid monoidal categories. There is a natural monoidal functor (called $\alpha$-induction) $\FZ(\CC) \to \Fun_\CC(\CM,\CM)$ defined by $z \mapsto z\odot -$. Note that the end $\int_{x\in \CM} [F(x), G(x)]_\CC$ as an object in $\CC$ is naturally equipped with a half braiding 
\begin{align*}
  &a\otimes \int_{x\in \CM} [F(x), G(x)] \simeq \int_{x\in \CM} [F(x), a \odot G(x)] \simeq  \int_{x\in \CM} [F(x), G(a \odot x)] \\
    &\hspace{2cm} \overset{(\ref{eq:change-variable-2})}{\simeq} \int_{x\in \CM} [F(a^R \odot x), G(x)] 
    \int_{x\in \CM} [a^R \odot F(x), G(x)] \simeq \left[\int_{x\in \CM} [F(x), G(x)]\right] \otimes a  
\end{align*}
Therefore, it defines an object in $\FZ(\CC)$. The following result first appeared in \cite{dkr} with a sketchy proof. For readers convenience, we give a detailed proof. 

\begin{prop}  \label{prop:FG=end}
We have $[F,G]_{\FZ(\CC)} \simeq \int_{x\in \CM} [F(x), G(x)]_\CC$ as objects in $\FZ(\CC)$. 
\end{prop}
\begin{proof}
Let $[F,G]_{\FZ(\CC)}\odot F \xrightarrow{\mathrm{ev}} G$ be the universal arrow that defines the internal hom $[F,G]_{\FZ(\CC)}$. There is a canonical isomorphism $\varphi: [F,G]_{\FZ(\CC)}\xrightarrow{\simeq} \int_{x\in \CM} [F(x), G(x)]$ as objects in $\CC$ by the universal property of $[F,G]_{\FZ(\CC)}$. It remains to show that $\varphi$ respects the half-braidings. This follows from the commutativity of the outer square of the following diagram: 
\begin{equation} \label{diag:ihom=end}
\xymatrix@C=0.8em@R=1em{
[F,G]\otimes a \ar[rr] \ar[ddd]_\sim \ar[dr] & & \int_{x\in \CM} [F(a^R\odot x), G(x)] \ar[ddd]^g \ar[dl] \\
& [F((a^R\odot a) \odot x), G(a\odot x)] \ar[d] & \\
& [F(x), G(a\odot x)] & \\
a\otimes [F,G] \ar[rr] \ar[ur] & & \int_{x\in \CM} [F(x), G(a\odot x)]. \ar[ul]
}
\end{equation}
The commutativity of the top and bottom triangle is obvious; that of the right square follows from the construction of $g$ (recall Proposition \ref{prop:change-of-variable}); and that of the left square follows from the fact that $\mathrm{ev}: [F,G]\odot F \to G$ is a natural transformation between two $\CC$-module functors, or equivalently, the fact that the following diagram  
$$
\xymatrix@R=1.5em{
([F,G]\otimes a) \odot F(x) \ar[r] \ar[d]_\sim & [F,G] \odot F(a\odot x) \ar[r]^-{\mathrm{ev}} & G(a\odot x) \ar[d] \\
(a\otimes [F,G]) \odot F(x) \ar[rr]^{\id_a\odot \mathrm{ev}}  & & a\odot G(x). 
}
$$
is commutative. The dinatural transformation properties of the morphisms $[F,G]\otimes a \to [F(x), G(a\odot x)]$ for all $x\in \CM$ induces a unique morphism $[F,G]\otimes a \to \int_{x\in \CM} [F(x), G(a\odot x)]$ by the universal property of the end. This uniqueness implies the commutativity of the outer square in the diagram (\ref{diag:ihom=end}). 
\end{proof}

}

\void{
\begin{prop}\label{prop:sub_cat_internal_hom}
    Let $\CC_0$, $\CC$ be two finite monoidal category and $\CM$ be a finite left $\CC$-module.
    If $I$ is a $k$-linear right exact monoidal functor from $\CC_0$ to $\CC$, then $\CM$ is a finite left $\CC_0$-module
    with the left action induced by $I$. Let $J$ be the right adjoint of $I$. We have
    \begin{align}
        [m,n]_{\CC_0} = J([m,n]_{\CC}), \qquad m, n \in \CM.
    \end{align} 
    And the universal morphism $I([m,n]_{\CC_0}) \odot m \to n$ equipped with $[m,n]_{\CC_0}$ is given by 
    \begin{align}\label{equ:sub_internal_hom_ev}
        IJ([m,n]_\CC) \odot m \to [m,n]_{\CC} \odot m \xrightarrow{\ev_m} n,
    \end{align}
    where the first arrow is induced by the natural transformation $IJ \to \id_{\CC}$ determined by the 
    adjunction $(I, J)$.
\end{prop}

\begin{proof}
   Note that $I$ has a right adjoint since it is right exact. It is clear that 
   \begin{align*}
       \Hom_{\CM}(I(a) \otimes m, n) \simeq \Hom_{\CC}(I(a), [m,n]_{\CC}) \simeq \Hom_{\CC_0}(a, J([m,n]_{\CC}), \quad
       \forall a \in \CC_0. 
   \end{align*}
    And it is routine to check that $I([m,n]_{\CC_0}) \otimes m \to n$ is given by \mref{equ:sub_internal_hom_ev}. 
\end{proof}
}

\void{
\subsection{Symmetric monoidal 2-category $\fCat_\bullet$}

Let $\fCat$ be the (strict) 2-category of finite categories over $k$, right exact $k$-linear functors and natural transformations. It is known that $\fCat$ is a symmetric monoidal 2-category (see Theorem 9 in \cite{dss2}). The tensor product is given by the Deligne's tensor product $\boxtimes$ and the tensor unit is $\bk$ (for the definition of  
symmetric monoidal bicategory, we refer the reader to \cite[Appendix C]{sp} , see also \cite{bn, c}). 

xxxx
}

\end{document}

\bibitem[WW]{wenwu1}
X.-G.~Wen, Y.-S.~Wu,
{\em Chiral Operator Product Algebra Hidden in Certain Fractional Quantum Hall Wave Functions}
Nucl. Phys. B, 419, 455 (1994) 

\bibitem[EGHL]{eghl} P.~Etingof, O.~Golberg, S.~Hensel, T.~Liu, A.~Schwendner, D.~Vaintrob, E.~Yudovina, 
    S.~Gerovitch, {\em Introduction to Representation Theory}, Student Mathematical Library, Vol. 59, 
    American Mathematical Society, Providence, RI, 2011.

\bibitem[B]{roman} R. Bezrukavnikov, 
{\em On tensor categories attached to cells in affine Weyl groups, in: Representation Theory of Algebraic Groups and Quantum Groups}, 
Adv. Stud. Pure Math., vol. 40, Math. Soc. Japan, Tokyo (2004), 69-90.

\bibitem[W1]{wen1}
X.-G.~Wen, 
{\em Topological Orders in Rigid States}, Int. J. Mod. Phys. B 4, (1990), 239.

\bibitem[W2]{wen2}
X.G.~Wen, 
{\em Quantum Field Theory of Many-Body Systems}, Oxford Univ. Press, Oxford, 2004

\bibitem[W1]{wen1}
X.-G.~Wen,
{\em Topological orders and edge excitations in FQH states}, Adv. Phys. 44, 405 (1995) 

\bibitem[W2]{wen2}
X.G. Wen, 
{\em Choreographed entanglement dances: Topological states of quantum matter}, Science
22 Feb 2019: Vol. 363, Issue 6429, eaal3099

\bibitem[Z]{zheng}
H.~Zheng,
{\em Extended TQFT arising from enriched multi-fusion categories}, [arXiv:1704.05956]

\bibitem[C]{c} S. Crans, 
    {\em Generalized centers of braided and sylleptic monoidal 2-categories}, 
    Adv. Math. 136 (1998), 183090009223.

\bibitem[NSSFS]{nssfs}
C. Nayak, S. H. Simon, A. Stern, M. Freedman, S. D. Sarma, 
{\em Non-Abelian Anyons and Topological Quantum Computation}, 
Rev. Mod. Phys. 80, 1083 (2008)

\bibitem[SP]{sp} Chris Schommer-Pries, 
{\em The classification of two-dimensional extended topological field theories}, 
Ph.D. thesis, UC Berkeley, 2009. 

\bibitem[BV]{BV} A. Brugui\'eres, A. Virelizier,
    {\em On the center of fusion categories},
    Pacific J. Math. 264 (2013), no. 1, 1--30.

\bibitem[ENO2]{eno2}
P.~Etingof, D.~Nikshych, V.~Ostrik, 
{\em Weakly group-theoretical and solvable fusion categories}, Adv. Math. 226 (2011)
176-205.

\bibitem[FMS]{CFTbook}
P. D. Francesco, P. Mathieu, D. S\'{e}n\'{e}chal, {\em Conformal Field Theory}, Springer, 1996

\bibitem[Ke]{ke}
    M. Kelly, {\em Doctrinal Adjunction},
    Lecture Notes in Mathematics, Vol. 420 (1974). 

\bibitem[KO]{ko} A. Kirillov, Jr., V. Ostrik,
    {\em On a $q$-analogue of the McKay correspondence and the ADE classification of $sl_2$ conformal field theories},
    Adv. Math. 171 (2002), no. 2, 183--227.
    
    \bibitem[Y]{yw-tri-cat}
W.~Yuan, 
{\em Note of an algebraic tricategory}, 

\bibitem[Ma]{Ma} S. Mac Lane,
    {\em Categories for the working mathematician, second ed.},
    Graduate Texts in Mathematics 5, Springer-Verlag, New York, 1998.

    \bibitem[Mu1]{Mu1} M. M\"uger,
    {\em From subfactors to categories and topology I. Frobenius algebras in and Morita equivalence of tensor categories},
    J. Pure Appl. Algebra, 180 (2003), no. 1-2, 81--157.

\bibitem[Mu2]{Mu2} M. M\"uger,
    {\em From subfactors to categories and topology II. The quantum double of tensor categories and subfactors},
    J. Pure Appl. Algebra, 180 (2003), no. 1-2, 159--219.

\bibitem[KW]{kong-wen}
L.~Kong, X.-G.~Wen,
{\em Braided fusion categories, gravitational anomalies and the mathematical framework for topological orders in any dimensions},  [arXiv:1405.5858]

\bibitem[S1]{s1} R. Street,
    {\em The monoidal centre as a limit},
    Theory Appl. Categ. 13 (2004), 184--190.
    
    \bibitem[LAU]{lau} R. Laugwitz,
    {\em The relative monoidal center and tensor products of monoidal categories}
    [arXiv:1803.04403]

\bibitem[Ko2]{kong-cardy}
L.~Kong, 
{\em Cardy condition for open-closed field algebras}, 
Commun. Math. Phys. {\bf 283} (2008) 25-92. 

\bibitem[Ko3]{anyon}
L.~Kong,
{\em Anyon condensation and tensor categories}, 
Nucl. Phys. B 886 (2014) 436-482 [arXiv:1307.8244]

\bibitem[KZ2]{kz2}
L. Kong, H. Zheng,
{\em Drinfeld center of enriched monoidal categories}, Adv. Math. 323 (2018) 411-426